%% file: Paper.tex
\documentclass[11pt]{article}
\usepackage[margin=1.25in]{geometry}
\usepackage[utf8]{inputenc}
\input{pckgs}
\usepackage[backend=biber, style=alphabetic,]{biblatex}
\usepackage[framemethod=TikZ]{mdframed}
\title{Bayesian Inference with Shaped Deep Non-linear MLPs}
\author{
Boris Hanin\footnote{
Princeton University. Email: bhanin@princeton.edu.
}
,
Tianze Jiang\footnote{
Princeton University. Email: tzjiang@princeton.edu
}
}
\date{May, 2026}
\begin{document}
\maketitle
{
\begin{abstract}
A central aim of deep learning theory is to characterize how neural networks make predictions in the regime of simultaneously large model and training set size. Since the limits of diverging number of model parameters and dataset size do not commute it is not clear \textit{a priori} what limits exist. In this work, we shed new light on these questions by studying Bayesian inference in deep non-linear MLPs in the regime where the number of training samples ($P$), the input dimension ($N_0$), the hidden layer width ($N$), and the number of hidden layers ($L$) can all be large. We build on the Neural Covariance SDE \cite{li2022neural} to analyze predictive posteriors in the regime where $LP/N\in\Theta(1)$, playing the role of an effective network depth. Our framework covers both smooth and ReLU activation functions and applies to arbitrary temperature. We find to first order in $LP/N$ a simple criterion for which data generating processes benefit from depth in the sense that larger $LP/N$ increases the Bayesian model evidence. We also give a novel derivation of a prior result from the physics literature that at least to first order in $LP/N$, the Bayesian predictive posterior is remarkably simple and is simply equivalent to that of a data-dependent kernel method. 
    
\end{abstract}
}


\newpage
\section{Introduction}
From approximation to training dynamics and generalization, many important results in deep learning theory concern the fundamental setting of deep Multi-layer Perceptrons (MLPs) \cite{zhang2017understandingdeeplearningrequires, advani2017highdimensionaldynamicsgeneralizationerror, Belkin19reconciling, hanin2019universal, daubechies2022nonlinear, roberts2022principles}. A core aim of many of these articles is to rigorously characterize how scaling up different dimensions, such as the number of hidden layers ($L$), the width of each hidden layer ($N$), and the size of the dataset ($P$) jointly shape the behavior of large MLPs at initialization or during training. 

Many such works consider the limit of infinite width at fixed depth and dataset size, giving rise to by-now classical results concerning both the Neural Tangent Kernel (NTK) and the Neural Network Gaussian Process (NNGP) \cite{lee2017deep, chizat2019lazy,   du2019gradientdescentprovablyoptimizes, chizat2019lazy, jacot2020neuraltangentkernelconvergence, yang2021tensorprogramsiiineural} and mean-field limits \cite{chizat2019lazy, rotskoff2018neural, mei2018mean, sirignano2020mean}.
However, the infinite-width limit (especially in the NTK parameterization) suppresses important phenomena capturing the role of depth \cite{Hanin2019products, hayou2019training, hayou2022curse, li2022neural,roberts2022principles} when $L\propto N$ or  large dataset size $P\propto N$ \cite{Li_2021, Pacelli_2023,seroussi2023separation, naveh2021predicting, bassetti2024proportionalinfinitewidthinfinitedepthlimit}. More recent studies \cite{hanin2024bayesian, hanin24random, camilli2025informationtheoreticreductiondeepneural, li2025geometric} also focuses on proportional limits, in which depth, width, and data diverge jointly, revealing new stochastic and geometric structures.

Despite recent progress, our understanding of how jointly scaling architecture dimensions ($L, N, P$ and input dimension $N_0$) in non-linear networks impacts training-time fitting and test-time prediction remains incomplete. When and why do deep neural networks behave like kernel methods, and what enables feature-learning? When feature learning is possible, what features do MLPs learn? What combinations of data and architecture benefit from depth?

In this work, we present the first {mathematically rigorous} way to answer these questions on non-linear deep MLPs from the
Bayesian inference perspective \cite{Mac92, Nea94, hanin2023bayesian, hanin2024bayesian, bassetti2025featurelearningfinitewidthbayesian}. Bayesian inference analyzes a neural network architecture $x\mapsto f(x;\Theta
)$ by choosing a specific prior on the weights $\P(\Theta)$ and computing the posterior $\P(\Theta|\D)$ (conditional on the training dataset $\D$) via tilting the prior with the training loss $\P(\D|\Theta)\propto \exp(-\beta\L(\D; \Theta))$. The posterior network prediction on test data $x_0$ is therefore the pushforward of the weight posterior $\Theta\sim\P(\Theta|\D)$ to the space of functions outputs. The marginal of training data $\P(\D)$, equivalently the likelihood of the architecture and prior, can be used for model selection. 

Our main contribution in this paper is characterizing and justifying a rigorous unifying framework of analyzing Bayesian inference in non-linear MLPs when $LP\propto N$, building on the Neural Covariance SDE (NSDE) \cite{li2022neural, li2024differentialequationscalinglimits, noci2023shapedtransformerattentionmodels}. Our method of computing Bayesian statistics (outlined in \Cref{sec: main ideas} and elaborated in \Cref{sec: prior with nsde}) covers a wide range of non-linear activation functions (leaky ReLU in \Cref{sec: relu nsde} and smooth functions in \Cref{sec: smooth nsde}) and can naturally be applied at any temperature. As examples of applying our framework to answer questions about learning and inference, we compute the perturbative expansion at small effective-depth (\Cref{takeaway: perturb} and \Cref{sec: takeaways}). Our calculations in the perturbative expansion recover and generalize many results in prior literature for linear networks \cite{hanin2023bayesian} and weakly non-linear networks with cubic activation function \cite{hanin2024bayesian}. We further generalize these results and provide novel insights on the perturbative prediction feature map as well as the Bayesian evidence analysis with different activation functions.

\subsection{Preliminaries and technical setup}
\paragraph{Model.}
We consider MLPs with $L$ fully connected layers $f(x; \Theta): x\in\R^{N_{0}}\to y\in \R$\footnote{While our analysis easily extends to $y\in\R^d$ output for $d\in\Theta(1)$, we present scalar output results for simplicity.}:
\begin{equation}\label{eqn: forward pass}
    z_1=\frac{1}{\sqrt{N_{0}}} W_0 x, \quad \phi_{\ell}=\phi_s(z_{\ell}), \quad z_{\ell+1}=\sqrt{\frac{c}{N}} W_{\ell} \phi_{\ell}, \quad  y=z_{\mathrm{out}}=\sqrt{\frac{c}{N}} W_{\mathrm{out}} \phi_L\in\R
\end{equation}
where $\Theta = (W_0, W_1,\dots, W_{L-1}, W_{\operatorname{out}})$ is the set of weights, $z_i\in\R^N, i=1,2,\dots, L$ are hidden pre-activations, $c$ is a normalizing constant such that $
c^{-1}\triangleq \mathbb{E}_{g \sim \mathcal{N}(0,1)}\left[\phi_s(g)^2\right]
$ so each hidden neuron is $\N(0, 1)$ at init, $\phi_s$ is the shaped activation function with parameter $s$ (see below), and the interior recursion runs from $\ell=1,2,\dots, L-1$. At initialization (also as the network prior), we assume that all $W_i$'s are initialized with i.i.d. $\N(0, 1)$ entries. We wish to characterize the network posterior when $L, P, N\to\infty$ jointly under the squared loss
$
\mathcal{L}(\mathcal{D}; \Theta)\triangleq\frac{1}{2} \sum_{\mu=1}^P\left(f\left(x_\mu ; \Theta\right)-y_\mu\right)^2
$
over a training dataset
$
\mathcal{D}=\left\{\left(x_\mu, y_\mu\right), \mu=1, \ldots, P\right\}$, where $x_\mu \in \mathbb{R}^{N_0}, y_\mu \in \mathbb{R}$ are training data.

\paragraph{Proportional width and depth limit.} 
Our results concern a sequential limit in which we first  consider \emph{fixed finite} $P, N_0$ and take the $L, N\to\infty$ limit with $L\propto N$; then we take $P, N_0$ to be large. Prior work (e.g. \cite{hanin2023bayesian, hanin2024bayesian}) suggests these limits commute, but we do not establish this rigorously. For a fixed number of datapoints $P$, prior studies focusing on the product of $L$ random $N\times N$ matrices \cite{hanin2019universal, hanin2023bayesian, bassetti2024proportionalinfinitewidthinfinitedepthlimit,li2025geometric} indicate that $t = L/N$ controls the initialization behavior in deep linear networks when $L, N\to\infty$, and that both $t =0$ and $t =\infty$ yield degenerate scaling regimes. While such conclusion does not immediately extend to networks with a non-linear activation function, one can apply a \textit{shaping} mechanism to the non-linear activation function $\phi_s$ such that the ratio $t  = L/N$ still uniquely controls the initialization distribution in non-linear MLPs, as we see next.

\paragraph{Shaped activation function.} For a growing depth $L\to\infty$, to ensure that the output distribution at initialization is non-degenerate, various prior works \cite{martens2021rapid,li2022neural, zhang2022deep, hanin2024bayesian} have established the necessity of \emph{shaping} the activation function $\phi_s$. Specifically, for a smooth base non-linearity $\phi_{\operatorname{base}}(x)$ normalized to $\phi_{\operatorname{base}}(0)=\phi_{\operatorname{base}}'(0)-1=0$, we consider
\begin{equation}\label{eqn: shape smooth}
    \phi_s(x)=s\phi_{\operatorname{base}}(x/s) = x+\frac{1}{2s}\phi''_{\operatorname{base}}(0)\cdot x^2+\frac{1}{6s^2}\phi'''_{\operatorname{base}}(0)\cdot x^3+\dots
\end{equation}
for a shaping factor $s=s(N, L, P)$ that may depend on $N, L, P$. Intuitively, the shaping $s>1$ effectively diminishes the effect of non-linear activation at each layer ($\phi_s(x)\approx x$), such that the cumulative effect becomes controllable over a large number of $L\to\infty$ layers. Similarly, for leaky-ReLU $\phi_{\operatorname{base}}(x) = c_+\max(x, 0)+c_-\min(x, 0)$ we consider
\begin{equation}\label{eqn: shape relu}
    \phi_s(x)=x+\frac{1}{ {s}}\l({c_+}\max(x, 0)+c_-\min(x, 0)\r).
\end{equation}
We also refer to \cite{li2024differentialequationscalinglimits} for a rigorous discussion on why shaping is necessary and the different types of degenerate limits with mis-specified $s$. In short, there usually exists a critical $s^\star(N, L, P)$ such that shaping with $s\in\omega(s^\star)$ results in effective equivalence to a deep linear network ($\phi_s = \operatorname{id}$), and shaping with $s\in o(s^\star)$ results in a $L/N$-independent degenerate limit. Intuitively, the shaping mechanism ensures that the  (cumulative) effect of applying a non-linear activation function at each layer is on the same scale as the effect of linear layers.

\paragraph{Bayesian inference.}  The Bayesian inference \cite{Mac92, Nea94} point of view for studying \eqref{eqn: forward pass} is based on a prior distribution over the trainable weights and studying the posterior measure over the weights defined by:
$$\P_{\operatorname{post}}(\Theta|\D)\propto \P_{\operatorname{prior}}(\Theta)\exp(-\beta\mathcal{L}(\D; \Theta)),$$
where $\beta$ is the inverse temperature.
The characteristic function of the predictive posterior on a test data input $x_0\in\R^{N_0}$ is therefore:\footnote{While our computations seamlessly carry to characterizing the predictive posterior of multiple test data points jointly, we only derive the case of predicting one posterior for presentation here.}
\begin{equation}\label{eqn: characteristic function def}
    \mathbb{E}_{\operatorname{post}, \beta}[\exp [-i \kappa f(x_0 ; \Theta)]]=\frac{Z_\beta(x_0 , \kappa)}{Z_\beta(0)},
\end{equation}
where $Z_\beta$ is the \textit{partition function}:
\begin{equation}\label{eqn: bayes partition def}
    Z_\beta ({x}_{0}, {\kappa} )\triangleq{  (2 \pi \beta)^{P/2}}\cdot  \mathbb{E}_{\operatorname{prior}}\left[\exp \left[-\beta \L(\D; \Theta)-i\kappa f({x}_{0}; \Theta)\right]\right],\quad Z_{\beta}(0) = Z_{\beta}(x_0; 0).
\end{equation}
The marginal on the data, or equivalently the likelihood of the model and prior, is known as \textit{Bayesian model evidence} and takes the form
$$\P_{\operatorname{prior}}(\D)=  \mathbb{E}_{\operatorname{prior}}\left[\exp \left[-\beta \L(\D; \Theta) \right]\right]\propto Z_\beta({0}).$$
Maximizing evidence as an the objective is a canonical Bayesian approach to architecture/prior selection (see e.g. \cite{Mac92}).
\paragraph{Conjugate kernel under the prior.}
Because the partition function $Z$ in \eqref{eqn: bayes partition def} concerns only the prior distribution of $f$, it is sufficient to study the distribution of $f(\D\cup\{x_0\};\Theta)\in\R^{P+1}$ in \eqref{eqn: forward pass} at initialization.
An important observation of \eqref{eqn: forward pass} at initialization is that, for each $\ell$, conditioned on all prior weights $W_{[0:\ell-1]}$, the next layer $z_{\ell+1}\in\R^N$ is $N$ i.i.d. Gaussian random variables with mean zero. Over a finite set of inputs, the covariance matrix between individual neurons from different $z_{\ell+1}(x)$'s follows the \emph{Conjugate Kernel} $\Phi^{(\ell)}$ 
\cite{cho2009kernel, el2010spectrum,  pennington2017nonlinear, Fan20Spectra, li2022neural,benigni2022largesteigenvaluesconjugatekernel, chouard2023deterministic, wang2024nonlinearspikedcovariancematrices, } defined (on a pair of inputs 
$x_\alpha, x_\beta$) as:
\begin{equation}\label{eqn: conjugate kernel}
    \Phi^{(\ell)}_{\alpha\beta}\triangleq\cov\l(\l[z_{\ell+1}(x_\alpha)\r]_1,\l[z_{\ell+1}(x_\beta)\r]_1\r)= \frac{c}{N}\langle\phi_{\ell}(x_\alpha),\phi_{\ell}(x_\beta)\rangle.
\end{equation}
Furthermore, because weights at each layer are sampled independently, the following is a Markov chain for the forward pass:
$$X\to\Phi^{(0)}\to z_1\to \phi_1 \to \Phi^{(1)}\to z_2\to\phi_2\to \Phi^{(2)}\dots\to \Phi^{(L)}\to y $$
where the transition is only random at the $\Phi^{(\ell)}\to z_{\ell+1}$ steps.
In other words, it suffices to study the discrete Markov chain
\begin{equation}\label{eqn: Phi evo demo}
    \frac{1}{N_0}X^\top X=\Phi^{(0)}\to\Phi^{(1)}\to\dots\to\Phi^{(L)}
\end{equation}
with the final output of the model $ f({\D\cup\{x_0\}};\Theta)|W_{[0:L-1]}\sim\N(0, \Phi^{(L)})$. 
As a result, studying Bayesian inference reduces to studying the distributional properties (expectation over test functions) of conjugate kernel $\Phi^{(L)}$ under the prior with random hidden weights.

\section{Main idea: Bayesian inference via Neural Covariance SDEs}\label{sec: main ideas}
To formalize the connection between the conjugate kernel and Bayesian inference,
a crucial argument we use in this work (see \Cref{lem: Bayes partition}) is as follows: when $f(x) = W^\top h(x)$ for some output projection weight $W\in\R^{N\times 1}$ whose prior is i.i.d. Gaussian $\N(0, \I_N)$ and $\L$ is the squared loss, the partition function $Z_\beta\left({x}_{0}, {\kappa}\right)$ in \eqref{eqn: bayes partition def} has another equivalent form as an integral:
\begin{equation}\label{eqn: integral bayes partition}
    Z_\beta\left({x}_{0}, {\kappa}\right)= \int_{\R^P}\exp\l[-\frac{1}{2 \beta}\|p\|^2+i p^{\top}Y\r]\cdot \mathbb{E}_{\operatorname{prior}}\left[
    \exp\l(-\frac{1}{2}v^{\top}\Phi v\r)\right] \d p
\end{equation}
where $v = [p^{\top}, \kappa]^{\top}\in\R^{P+1}$ and $\Phi = \l[h(X), h(x_0)\r]^{\top}\l[h(X), h(x_0)\r]\in\R^{(P+1)\times (P+1)}$ is the matrix version of our conjugate kernel \eqref{eqn: conjugate kernel} at the penultimate layer. The formulation \eqref{eqn: integral bayes partition} also allows us to take $\lim_{\beta\to\infty} Z_\beta$ and is fairly standard \cite{seroussi2023separation,Li_2021,hanin2023bayesian}.
For other loss functions, one can also write the respective expressions of $Z$ with only $\E_{\operatorname{prior}}$ of statistics of $\Phi$. As an example, the Binary Cross-Entropy (BCE) loss gives equivalent partition function
$$
Z_\beta^{\mathrm{BCE}}\left(x_0, \kappa\right)=2^{-\beta P} \int_{\mathbb{R}_{+}^P} \nu_\beta^{\mathrm{PG}}(\!\!\d\omega) \int_{\mathbb{R}^P} \frac{\exp(-\frac{1}{2} p^{\top} \Omega^{-1} p)}{(2 \pi)^{P / 2} \sqrt{\operatorname{det} \Omega}} \cdot \mathbb{E}_{\operatorname{prior}}\left[
    \exp\l(-\frac{1}{2}v^{\top}\Phi v\r)\right] \d p
$$
where
$
\Omega=\operatorname{diag}\left(\omega_1, \ldots, \omega_P\right)$, $v=\left[
p+i \beta\left(Y-\frac{1}{2} \one\right) ,
\kappa\right]^\top,
$
and $\nu_\beta^{\mathrm{PG}}$ is the product Polya-Gamma law with marginals
$
\omega_\mu \sim \operatorname{PG}(\beta, 0)
$ \cite{polson2013bayesian}. In this paper, we focus specifically on the squared error because of the simplicity of \eqref{eqn: integral bayes partition} (specifically the integration form), but in principle our techniques to compute $\E_{\operatorname{prior}}[e^{-v^\top\Phi v/2}]$ provide solutions to cross-entropy and other loss functions as well. 

Having established that \eqref{eqn: integral bayes partition} only depends on the prior through $\mathbb{E}_{\operatorname{prior}}\left[
\exp\l(-\frac{1}{2}v^{\top}\Phi v\r)\right] $, it thus suffices to compute this expectation for each $p$. In the simplest case where the network prior to $\Phi$ is deterministically (or at least approximately) equal to a kernel $\bar\Phi$ with an associated \textit{feature map} $f: x_\alpha\to \bar x_\alpha$ such that $\bar\Phi_{\alpha\beta} = \langle \bar x_\alpha, \bar x_\beta\rangle$, the expectation in \eqref{eqn: integral bayes partition} is moot and Bayesian inference reduces to what we refer to as the \emph{kernel method} where the partition function $Z_\beta^{\operatorname{kernel}}\left({x}_{0}, {\kappa}; f\right)\triangleq\int_{\R^P}\exp\l[-\frac{1}{2 \beta}\|p\|^2+i p^{\top}Y-\frac{1}{2}v^{\top}\bar\Phi v\r] \d p$ can be computed directly:
\begin{align}\label{eqn: kernel method}
    \!Z_\beta^{\operatorname{kernel}}\!\!=&(2\pi)^{\frac{P}{2}}\!\exp \! \left[-\frac{1}{2} (Y^{\top} \!A^{-1} Y\!+\log\det A)-i \kappa \bar\Phi_{\kappa p} A^{-1} Y\!-\!\frac{1}{2} \kappa^2(\bar\Phi_{\kappa \kappa}\!-\!\bar\Phi_{\kappa p} A^{-1} \bar\Phi_{p \kappa})\right]
\end{align}
where $\bar\Phi=\l[\begin{array}{cc}
    \bar\Phi_{pp} &  \bar\Phi_{p\kappa}\\
     \bar\Phi_{\kappa p}& \bar\Phi_{\kappa\kappa} 
\end{array}\r]$ and $A = \frac{1}{\beta}\I_P+\bar\Phi_{pp}$.
This induces the predictive posterior \eqref{eqn: characteristic function def} to be an exact \emph{Gaussian} random variable with $
    \E_{\operatorname{post}}[f(x_0)] = \bar\Phi_{\kappa p}A^{-1}Y$ and $\var_{\operatorname{post}}(f(x_0)) = \bar\Phi_{\kappa \kappa}-\bar\Phi_{\kappa p} A^{-1} \bar\Phi_{p \kappa}$.
Intuitively, the predictive posterior mean weighs the training labels according to alignment between test and train in the feature space, and the variance measures the distance between test towards the orthogonal projection into train in the feature space.

In general, it is often not the case that the prior distribution of $\Phi$ concentrates on a single $\bar\Phi$. Without concentration, it is also not always true that the posterior prediction is a Gaussian (equivalently, $\log Z$ is not a quadratic polynomial of $\kappa$), hence the non-triviality of dealing with the expectation in \eqref{eqn: integral bayes partition}. 
For a finite $P$, we are interested in the proportional $L/N = t^\star>0$ limit, where there exist an important tool for analyzing the evolution \eqref{eqn: Phi evo demo}: the Neural Covariance SDE (NSDE) \cite{li2022neural, li2024differentialequationscalinglimits}, which (informally) states the following:
\begin{proposition}[Neural Covariance SDE, informally]\label{prop: nsde 1}
    Fix a terminal $t^\star>0$ and scale $N\to \infty, L = \lfloor t^\star N\rfloor$. For shaped smooth activations \eqref{eqn: shape smooth} and leaky ReLU \eqref{eqn: shape relu}, when taken $s=\alpha_{\operatorname{act}}^{-1/2}\sqrt{N}$ for some $\alpha_{\operatorname{act}}>0$, the interpolated covariance process $t\to \Phi^{({\lfloor t N\rfloor})}$ converges to a diffusion process $\Phi_t\in\R^{P\times P}$ solving\footnote{The exact form in \cite{li2022neural} is different but algebraically equivalent to ours, see \Cref{lem: NSDE matrix}.}
    \begin{equation}\label{eqn: nsde informal}
        \d \Phi_t= \alpha_{\operatorname{act}}\cdot b(\Phi_t) \d t+ \Phi_t^{1/2}\d B_t\Phi^{1/2},  t\in [0, t^\star],\quad \Phi_0 =\frac{1}{N_0}X^\top X.
    \end{equation}
    Here $B_t = \frac{1}{\sqrt{2}}\l(\tilde B_t+\tilde B^{\top}\r)\in\R^{P\times P}$ where $\tilde B_t\in\R^{P\times P}$ has i.i.d. Brownian motion entries. The drift components $b(\Phi)$ depends only on the base non-linearity $\phi_{\operatorname{base}}$.
\end{proposition}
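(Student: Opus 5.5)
The plan is to treat \eqref{eqn: Phi evo demo} as a time-homogeneous Markov chain on positive semidefinite $P\times P$ matrices with time step $1/N$, and to invoke a standard diffusion-approximation theorem for Markov chains (Stroock--Varadhan / Ethier--Kurtz, as used in \cite{li2022neural}). Concretely, we compute the conditional increments $\Delta^{(\ell)} = \Phi^{(\ell+1)}-\Phi^{(\ell)}$ given $\Phi^{(\ell)}$ and verify three conditions:
\begin{enumerate}
\item $N\,\mathbb{E}[\Delta^{(\ell)}\mid\Phi^{(\ell)}] = \alpha_{\operatorname{act}} b(\Phi^{(\ell)}) + o(1)$;
\item $N\,\mathrm{Cov}(\Delta^{(\ell)}\mid\Phi^{(\ell)})$ converges to the covariance of the matrix noise $\Phi^{1/2}\,\d B\,\Phi^{1/2}$, namely $\Phi_{\alpha\gamma}\Phi_{\beta\delta}+\Phi_{\alpha\delta}\Phi_{\beta\gamma}$;
\item $N\,\mathbb{E}\|\Delta^{(\ell)}\|^{4}\to 0$ (or an analogous higher-moment bound).
\end{enumerate}
Each of these should hold locally uniformly in $\Phi$. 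Convergence of the interpolated process, together with well-posedness of the limiting martingale problem, then yields the SDE on $[0,t^\star]$. The first layer $\Phi^{(0)}=X^\top X/N_0$ is deterministic, so the initial condition is immediate.

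For the increment computation, condition on $\Phi^{(\ell)}$ and write $z_{\ell+1}$ as having $N$ i.i.d.\ rows $g_i\sim\N(0,\Phi^{(\ell)})\in\R^P$. Then
\[
\Phi^{(\ell+1)}=\frac{c}{N}\sum_{i=1}^N \phi_s(g_i)\phi_s(g_i)^\top .
\]
Expand $\phi_s(g)=g+s^{-1}\psi_1(g)+s^{-2}\psi_2(g)+\dots$ using \eqref{eqn: shape smooth}, or the piecewise-linear form \eqref{eqn: shape relu}, and take $c=1+O(1/s)+O(1/s^2)$. The linear part gives $\frac1N\sum_i g_ig_i^\top-\Phi$, a centered Wishart fluctuation of order $N^{-1/2}$ whose covariance is exactly the symmetrized form $(\Phi_{\alpha\gamma}\Phi_{\beta\delta}+\Phi_{\alpha\delta}\Phi_{\beta\gamma})/N$; this produces the diffusion term. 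Since $s^{-2}=\alpha_{\operatorname{act}}/N$, the second-order terms (cross terms $g\psi_2^\top$, the term $\psi_1\psi_1^\top$, and the correction from $c$) contribute a mean of order $\alpha_{\operatorname{act}}/N$. Their Gaussian expectations define $b(\Phi)$, and these expectations depend only on $\phi_{\operatorname{base}}$.

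The main obstacle is the first-order term $s^{-1}\mathbb{E}[g\psi_1(g)^\top+\psi_1(g)g^\top]$, which is of order $N^{-1/2}$ and would blow up after multiplying by $N$. In the smooth case $\psi_1$ is quadratic, so this term vanishes by the odd-moment symmetry of $g$. For leaky ReLU it does not vanish in general. We would handle this either by centering/symmetrizing the shaping (choosing the normalization so that $\mathbb{E}[g\,\psi_1(g)^\top]$ is absorbed into $c\,\Phi$), or by showing it is proportional to $\Phi$ and removed by the normalization. The first-order random fluctuation $s^{-1}N^{-1/2}$ is of lower order than the Wishart noise and vanishes. The remaining technical steps are as follows.
\begin{itemize}
\item Control remainders uniformly using moment bounds on Gaussians and Taylor remainder bounds (polynomial growth of $\phi_{\operatorname{base}}$).
\item Obtain local Lipschitz continuity of $b$ and of $\Phi\mapsto\Phi^{1/2}$ (via Gaussian smoothing for ReLU), so that the limiting SDE is well posed.
\item Use a localization/stopping argument so that $\Phi_t$ stays away from the degenerate boundary of the cone up to $t^\star$.
\end{itemize}
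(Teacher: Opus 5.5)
\textbf{Verdict: genuine gap (non-explosion).} Your skeleton matches the route the paper takes by deferring to \cite{li2022neural}: conditional mean and covariance of $\Phi^{(\ell+1)}-\Phi^{(\ell)}$, a higher-moment bound, and an Ethier--Kurtz diffusion approximation. Your increment computations are correct. The Wishart part has exactly the covariance $\Phi_{\alpha\gamma}\Phi_{\beta\delta}+\Phi_{\alpha\delta}\Phi_{\beta\gamma}$ of $\Phi^{1/2}\,\mathrm{d}B\,\Phi^{1/2}$ (\Cref{lem: NSDE matrix}). The order-$s^{-2}$ Gaussian expectations reproduce the drifts in \Cref{thm: nsde relu} and \Cref{thm: nsde smooth}.

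The gap is the passage from local to global time. Because $b$ is not globally Lipschitz, conditions 1--3 hold only uniformly on compacts. The diffusion-approximation theorem therefore gives convergence only of the chains stopped on exiting compacts. Local Lipschitz continuity of the coefficients gives uniqueness only up to an explosion time. To obtain a diffusion on all of $[0,t^\star]$ you must show the limit does not explode, and that is exactly where hypotheses enter; your plan uses none. In the smooth case the diagonal entries solve the autonomous equation
\[
\mathrm{d}\Phi_{\alpha\alpha}=\alpha_{\operatorname{act}}\bigl(\tfrac34\phi''(0)^2+\phi'''(0)\bigr)\Phi_{\alpha\alpha}(\Phi_{\alpha\alpha}-1)\,\mathrm{d}t+\sqrt{2}\,\Phi_{\alpha\alpha}\,\mathrm{d}W_t .
\]
If $\frac34\phi''(0)^2+\phi'''(0)>0$, this explodes with positive probability before any fixed time, by Feller's test (quadratic drift against linear noise). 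So the conclusion cannot hold on $[0,t^\star]$ without a condition like \texttt{A3}. The missing step has three parts:
\begin{enumerate}
\item Localize with $f(\Phi)=\max\{(\min_\alpha\Phi_{\alpha\alpha})^{-1},\max_\alpha\Phi_{\alpha\alpha}\}$, as the paper does.
\item Show the diagonals reach neither $0$ nor $\infty$. For leaky ReLU, $\rho(1)=0$ makes each $\Phi_{\alpha\alpha}$ a driftless geometric Brownian motion. For smooth activations, \texttt{A3} makes the diagonal drift nonpositive above $1$, and nonnegative and $O(\Phi_{\alpha\alpha})$ below $1$, so comparison with geometric Brownian motion applies.
\item Transfer non-explosion to the prelimit via \Cref{lem: finite non-explosion}.
\end{enumerate}

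Two smaller points. First, localizing away from the boundary of the PSD cone, so that $\Phi\mapsto\Phi^{1/2}$ is Lipschitz, would require showing $\lambda_{\min}(\Phi_t)>0$ on $[0,t^\star]$ almost surely. You give no argument for this, and it is not the localization needed. The generator sees the noise only through the polynomial map $\Sigma(\Phi)$, so you can work with the vectorized equation of \cite{li2022neural} and identify it with the matrix form through \Cref{lem: NSDE matrix}. PSD-ness of $\Phi_t$ then comes from the prelimit via \Cref{lem: psd of limit}. Only the diagonal needs control, because the ReLU drift contains $D^{-1/2}$.

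Second, the leaky-ReLU first-order term you left open is resolved by your second option; the first is unavailable since the shaping is fixed. Write $\psi(x)=\frac{c_++c_-}{2}x+\frac{c_+-c_-}{2}|x|$ and use $\E[g_\alpha|g_\beta|]=0$. Then the $O(s^{-1})$ mean is exactly $\frac{c_++c_-}{s}\Phi$. Normalizing by $c^{-1}=1+\frac{c_++c_-}{s}+\frac{c_+^2+c_-^2}{2s^2}$ cancels it. The order-$s^{-2}$ remainder, $s^{-2}\frac{(c_+-c_-)^2}{4}\bigl(\E|g_\alpha||g_\beta|-\Phi_{\alpha\beta}\bigr)$, equals $\frac{\alpha_{\operatorname{act}}}{N}b_{\alpha\beta}(\Phi)$ with the paper's $\rho$.
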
 
We formalize \Cref{prop: nsde 1} in \Cref{sec: prior with nsde}.
Recall that our goal is to evaluate the expectation under prior $\mathbb{E}\left[
    \exp\l(-\frac{1}{2}v^{\top}\Phi^{(L)} v\r)\right]$. To do that, simply
plugging in \eqref{eqn: nsde informal} for $s_t(v) \triangleq v^\top\Phi_t v\in\R$ yields
$$\d s_t(v) =v^{\top}\Phi_t^{1/2}\d B_t\Phi_t^{1/2} v +\alpha_{\operatorname{act}} v^\top b(\Phi_t)v\d t = \sqrt{{2}}s_t(v)\d W_t+\alpha_{\operatorname{act}} v^\top b(\Phi_t)v\d t$$
where $(W_t)_{t\geq 0}$ is a standard Brownian Motion on $\R$. The SDE on $s_t(v)$ thus (in theory) contains all the necessary information to solve $\E_{\operatorname{prior}}[\exp(-s_t(v)/2)]$, integrating which in \eqref{eqn: integral bayes partition} will then lead to a closed form solution to the Bayesian partition function.

Although our treatments for $P$ are non-asymptotic in nature, it is still convenient to balance scale preliminarily. Consider a deep linear network, in which \eqref{eqn: nsde informal} has $\alpha_{\operatorname{act}}=0$ or equivalently $s=\infty$, we have the linear SDE
$\d \Phi_t= \Phi_t^{1/2}\d B_t\Phi^{1/2},$
the RHS of which has operator norm (in fact the entire spectrum) $\l\|\Phi_t^{1/2}\d B_t\Phi^{1/2}\r\|_{op} \asymp \sqrt{P\d t}\|\Phi_t\|_{op}$. As a result, for large $P$, we will make a time change $\d\tau\triangleq P\d t$ such that \eqref{eqn: nsde informal} becomes (denote $c_{\operatorname{act}}\triangleq \alpha_{\operatorname{act}}P^{-1}$):
\begin{equation}\label{eqn: nsde tau}
    \d \Phi_\tau=c_{\operatorname{act}}\cdot b(\Phi_\tau) \d \tau+ \sqrt{\frac{1}{P}}\Phi_\tau^{1/2}\d B_\tau\Phi^{1/2}_\tau,\quad   \tau\in [0, \tau^\star \triangleq P t^\star=LP/N].
\end{equation}
and we have
\begin{equation}\label{eqn: s_tau}
    \d s_\tau(v)= \sqrt{{2}/P}\cdot  s_\tau(v)\d W_\tau+c_{\operatorname{act}}\cdot  v^\top b(\Phi_\tau)v\d \tau,\quad s_0(v) = v^\top \Phi_0 v.
\end{equation}
We will refer to $\tau\in [0, LP/N]$ as the \textit{depth-time}, which solely controls the $\Phi$ dynamics (normalized for large $P$), and $\tau^\star$ to be the terminal time. 
The resulting $ \frac{LP}{N}\in\Theta(1)$ regime has also been the central pursuit of characterizing the proportional $N, L, P\to\infty$ limits  in many related studies \cite{hanin2023bayesian, hanin2024bayesian, li2025geometric}.  Our results will be parameterized by a value of $\tau^\star>0$ and in the limit when we first take 
\begin{equation}\label{eqn: limit def}
\text{first~}N,L\rightarrow \infty\quad \text{with}\quad L/N\rightarrow \tau^*P^{-1}\qquad \text{then}\qquad P\to \infty.
\end{equation}
Methodologically, we first fix a terminal time $\tau^\star>0$. For any finite $P$, Bayesian statistics in the asymptotic limit with $L, N\to\infty$ proportionally such that $L/N =t^\star= \tau^\star P^{-1}$ can be computed as a function of $P$ and $\tau^\star$. Finally, we normalize the result so that the limit $P\to\infty$ is well-defined for fixed $\tau^\star$. Our calculation of the Bayesian partition function $Z^{(N, L, \D)}$ with \eqref{eqn: forward pass} will be based on the following result to $Z^{(\tau)}$ from solving the NSDE, which we justify in \Cref{sec: convergence apdx} (see also \Cref{sec: formality}).
\begin{theorem}[Pointwise convergence of MLP to SDE partition function]\label{thm: mlp part to nsde part}
Fix any $t^\star>0,   x_0\in\R^{N_0}, \kappa\in\R$ and finite $\beta>0$. In the width and depth limit $N, L\to\infty, L/N = t^\star$, $\tau^\star\triangleq Pt^\star$ with a fixed $P$, 
Bayesian inference with MLP \eqref{eqn: forward pass} from $\mathbb{E}_{\operatorname{post}, \beta}\l[e^{-i \kappa f(x_0 ; \Theta)}\r]\propto Z_\beta^{(N, L, \D)} ({x}_{0}, {\kappa})$ has that:
\begin{align}
    \lim_{N, L\to\infty} Z_\beta^{(N, L, \D)} ({x}_{0}, {\kappa})&= Z_\beta^{(\tau^\star)} ({x}_{0}, {\kappa} )\nonumber
    \\&= \int_{\R^P}\exp\l[-\frac{1}{2 \beta}\|p\|^2+i p^{\top}Y\r] \mathbb{E}_{\eqref{eqn: s_tau}}\left[ \exp\l(-\frac{1}{2}{
    s_{\tau^\star}(v)}\r)\right] \d p
    \label{eqn: nsde partition}
\end{align}
where $v = [p^\top, \kappa]^\top$ and $s_{Pt^\star}$ is the solution to the SDEs \eqref{eqn: nsde tau} and \eqref{eqn: s_tau}. 
\end{theorem}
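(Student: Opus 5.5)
Our strategy is to work with the integral representation \eqref{eqn: integral bayes partition} of the partition function, which we take as given (\Cref{lem: Bayes partition}), and to pass to the limit inside the $p$-integral by dominated convergence. The representation comes from integrating out the readout $W_{\operatorname{out}}$ conditional on the hidden weights. It gives
\begin{equation*}
Z_\beta^{(N,L,\D)}(x_0,\kappa)=\int_{\R^P} e^{-\frac{1}{2\beta}\|p\|^2+ip^\top Y}\,\E_{\operatorname{prior}}\!\left[e^{-\frac12 v^\top \Phi^{(L)} v}\right]\d p,
\end{equation*}
where $v=[p^\top,\kappa]^\top$ and $\Phi^{(L)}\in\R^{(P+1)\times(P+1)}$ is the conjugate kernel on $\D\cup\{x_0\}$. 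The proof therefore has two steps: (i) show that for each fixed $p$ the inner expectation converges to $\E_{\eqref{eqn: s_tau}}[e^{-s_{\tau^\star}(v)/2}]$; (ii) justify exchanging the limit with the $p$-integral.

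For step (i), we invoke the Neural Covariance SDE, \Cref{prop: nsde 1} in its formal version in \Cref{sec: prior with nsde}, on the $P+1$ inputs. It gives weak convergence of the interpolated process $t\mapsto\Phi^{(\lfloor tN\rfloor)}$, in the Skorokhod topology, to the diffusion $\Phi_t$. In particular, the terminal marginal $\Phi^{(L)}$ converges in distribution to $\Phi_{t^\star}$. The time change $\tau=Pt$ identifies $\Phi_{t^\star}$ with $\Phi_{\tau^\star}$ solving \eqref{eqn: nsde tau}, and It\^o's formula applied to $v^\top\Phi_\tau v$ yields \eqref{eqn: s_tau}.

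The test function $\Phi\mapsto e^{-\frac12 v^\top\Phi v}$ is continuous. It is also bounded by $1$, since $\Phi^{(L)}$ is a Gram matrix and hence positive semidefinite, and the limit $\Phi_\tau$ stays in the PSD cone because it is a limit of PSD matrices. The portmanteau theorem then gives convergence of the expectation for each fixed $p$, with no moment bounds needed beyond those already used in proving the NSDE.

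For step (ii), positive semidefiniteness again gives $|e^{ip^\top Y}\E[e^{-\frac12 v^\top\Phi^{(L)}v}]|\le 1$ uniformly in $N$ and $L$. The integrand is therefore dominated by $e^{-\|p\|^2/(2\beta)}$, which is integrable on $\R^P$ because $\beta<\infty$. Dominated convergence then yields \eqref{eqn: nsde partition}. This is exactly where finiteness of $\beta$ is used; the $\beta=\infty$ case would require a separate argument.

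The main obstacle is step (i). First, the NSDE must be applied to the Gram matrix at the penultimate layer, with the correct normalization $c/N$ and including the test point $x_0$. Second, the discrete layer index must be matched with the time grid when $L=\lfloor t^\star N\rfloor$ rather than exactly $t^\star N$. Third, the limiting SDE has the non-Lipschitz coefficient $\Phi^{1/2}$, so we need well-posedness, or at least uniqueness in law, of its solution, so that the weak limit is unique. Our plan is to take the convergence statement from \cite{li2022neural, li2024differentialequationscalinglimits} as established, using the Markov-chain martingale-problem argument with generator drift $\alpha_{\operatorname{act}}b$ obtained from the shaping $s=\alpha_{\operatorname{act}}^{-1/2}\sqrt N$. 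With that in hand, the only remaining issue is whether the leaky-ReLU drift $b$ is continuous on the PSD cone, including at singular $\Phi$, which we would check directly from its formula.
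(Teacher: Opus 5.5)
Your architecture matches the paper's: the representation of \Cref{lem: Bayes partition}, then convergence of $\E[F(\Phi^{(L)})]$ for each fixed $p$, where $F(\Phi)=e^{-\frac12 v^\top\Phi v}$, then dominated convergence in $p$. Your dominating function $e^{-\|p\|^2/(2\beta)}$, which uses $0<F\le 1$ on the PSD cone, is correct and more explicit than the paper's.

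The gap is in step (i), at ``in particular, the terminal marginal converges in distribution.'' As stated in \Cref{thm: nsde relu} and \Cref{thm: nsde smooth}, the NSDE does not give global weak convergence of $t\mapsto\Phi^{(\lfloor tN\rfloor)}$. It gives only \emph{local} convergence. The chain stopped at its exit time $\kappa_R^{(N)}$ from $K_R=\{f\le R\}$, with $f(\Phi)=\max\{(\min_\alpha\Phi_{\alpha\alpha})^{-1},\max_\alpha\Phi_{\alpha\alpha}\}$, converges to the diffusion stopped at $\kappa_R$, and separately the limit does not explode. This localization is forced because neither drift is globally Lipschitz: the smooth drift grows quadratically, and the ReLU drift involves $D^{-1/2}$. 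So the hypothesis that matters is non-explosion, which is what \texttt{A3} guarantees in the smooth case, not the uniqueness in law on your list of obstacles. Local convergence alone does not control the probability that the pre-limit chain leaves $K_R$ before time $t^\star$, so the marginal at $t^\star$ does not follow automatically.

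The missing step is this de-localization, and it is what the paper's proof consists of. For fixed $R$, the limit has continuous paths, so the stopped marginals converge. Hence $\E F(\Phi_N^{(\lfloor (t\wedge\kappa_R^{(N)})N\rfloor)})\to\E F(\Phi_{t\wedge\kappa_R})$. Since $0<F\le 1$ on PSD matrices, stopped and unstopped expectations differ by at most $\P(\kappa_R^{(N)}\le t)$ and $\P(\kappa_R\le t)$ respectively. Therefore
\[
\limsup_{N\to\infty}\bigl|\E F(\Phi_N^{(\lfloor tN\rfloor)})-\E F(\Phi_t)\bigr|\le\limsup_{N\to\infty}\P\bigl(\kappa_R^{(N)}\le t\bigr)+\P(\kappa_R\le t).
\]

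The genuinely new input is the uniform-in-$N$ bound $\limsup_N\P(\kappa_R^{(N)}\le t)\le\P(\kappa_R\le t)$ of \Cref{lem: finite non-explosion}. It is proved as follows. On $\{\kappa_R^{(N)}\le t\}$, the stopped chain at a continuity time $a>t$ lies in the closed set $\{f\ge R\}$. For the continuous limit, this happens iff $\kappa_R\le a$. Portmanteau then gives the bound at $a$, and you let $a\downarrow t$. Non-explosion of the limit sends $\P(\kappa_R\le t)\to 0$ as $R\to\infty$, which closes step (i).

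Your claim that the limit is PSD ``because it is a limit of PSD matrices'' has the same localization issue; the paper handles it in \Cref{lem: psd of limit}, again using non-explosion. Once the time-$t^\star$ marginal converges in distribution, however, PSD-ness at that time follows from Portmanteau on the closed cone. Your other concerns are not where the difficulty lies. Continuity of the ReLU drift is automatic on $K_R$, because diagonals are bounded away from $0$ there and $\rho$ is continuous on $[-1,1]$.
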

From \Cref{thm: mlp part to nsde part}, one can also easily proceed to take $P\to\infty$ at \eqref{eqn: nsde partition}, which justifies dropping $O(P^{-1})$-terms at the RHS via the Lévy's continuity theorem. Due to the analytic form of $Z^{(\tau)}$, 
in the majority of the following results when not explicitly stated otherwise, we will analyze $Z=Z^{(\tau)}$ the {SDE} partition function, as opposed to the actual MLP partition function $\lim Z^{(N, L, \D)}$ (see \Cref{sec: formality}).

In general, while solving \eqref{eqn: s_tau} may be intractable for general $\tau^\star>0$, the small $\tau$ expansion (leading partial derivatives at $\tau=0$) of $\E[e^{-s/2}]$ is still tractable in closed form. Furthermore, the system can be explicitly closed in special cases, such as when $\alpha_{\operatorname{act}}=0$ (diffusion-only) and when $\tau^\star = c_{\operatorname{act}}^{-1}\to 0$ (equivalently drift-only), corresponding to different MLP scaling regimes. Our main technical contribution in this paper is thus to extract insights from partially solving \eqref{eqn: nsde tau} and \eqref{eqn: s_tau} and derive corollaries towards Bayesian inference in MLPs.

\paragraph{Assumptions.}  Our most important assumption is that $P<N_0$ and that the initial $\Phi_0=\frac{1}{N_0}X^\top X\in\R^{P\times P}\succ 0$ is full rank. Although our calculation of $Z^{(Pt)}$ does not require assumptions on the base activation function beyond those in the NSDE (see \Cref{sec: prior with nsde}), the final step of taking $P\to\infty$ and extracting qualitative results requires more assumptions in the dataset $\D$. 
Specifically, we assume that $\max_{\alpha}\|X_\alpha\|^2,  \min_{\alpha}\|X_\alpha\|^2, \|\Phi_0\|_{op}, \|\Phi_0^{-1}\|_{op}\in\Theta(1)$ which is true in Marchenko-Pastur-type Gram matrices (when $\lim P/N_0=\lambda<1$) and that $Y\in\R^P$ has $\Theta(1)$-entries.
When these large-$P$ assumptions are not met (e.g. if $\|\Phi_0\|_{op}\asymp P$), our finite $P$ partition function calculations are not affected, but the $P\to\infty$ limit will render different results (i.e. the set of dominating vs negligible terms may be different).

\section{In relation to prior literature}
\paragraph{Bayesian inference}
A central theme in modern studies of large networks is that neural networks at initialization can be studied in \emph{function space} rather than \emph{parameter space}. In the Bayesian view, randomness in the weights induces a prior over functions. 
Earlier work \cite{Nea94, Wil96} showed that for a broad class of priors, the single-hidden-layer network prior converges (as the width scales) to a Gaussian process, and that the resulting Gaussian process perspective can be leveraged for principled uncertainty quantification and inference. These ideas were later extended to {deep} fully connected networks by taking $L\to\infty$ after $N\to\infty$ first: iterating the layer-wise covariance map yields a deterministic kernel recursion at fixed depth, giving rise to the Neural Network Gaussian Process (NNGP) formalism \cite{lee2017deep, matthews2018gaussian, trevisan2023widedeepneuralnetworks}. In this regime, hidden feature Gram matrix (i.e. our conjugate kernel $\Phi$) concentrate in the large width limit, enabling a clean characterization of ``typical'' network behavior at initialization.

Outside of the strictly infinite-width setting, however, exact Bayesian inference is generally intractable, motivating a large body of approximation methods that aim to preserve calibrated uncertainty while remaining computationally feasible \cite{Blu+15,Gal+16}. Parallel to these algorithmic developments, recent theory has begun to characterize Bayesian posteriors in \textit{scaling limits} where computations become analyzable. For deep linear networks, \cite{hanin2023bayesian} first derives exact non-asymptotic expressions for the Bayesian evidence and predictive posterior and highlights the resulting \emph{effective posterior depth} $LP/N$. This is extended by \cite{bassetti2025featurelearningfinitewidthbayesian} which studies deep linear networks with convolution layers. In the resulting $LP/N\in\Theta(1)$ joint scaling limit with shaped nonlinear networks, \cite{hanin2024bayesian} develops perturbative first-order $LP/N$-expansions around the infinite-width $N>\!\!>LP$ baseline. The results in \cite{hanin2024bayesian} (many of which we will recover and extend, see \Cref{sec: technical results}) are closest in spirit to ours. However, their calculations are conducted at a physics level of rigor and rely on many simplifications of the data and the asymptotic limit. Our framework, on the other hand, is both mathematically rigorous, conceptually simple, and easier to generalize.

\paragraph{Infinite width and depth scaling}
Taking depth large introduces new phenomena even before considering training: stability of signal propagation, gradient explosion/vanishing, and emergent transitions between ordered and chaotic behavior. Early dynamical mean-field analyses of large random networks identified sharp transitions in typical dynamics and provided a conceptual template for later “edge-of-chaos” perspectives \cite{SCS88}. In deep \emph{feedforward} settings, mean-field methods quantify how correlations and norms propagate across layers, and connect expressivity to transient chaotic amplification mechanisms \cite{Poo+16}. Closely related work on “deep information propagation” formalizes how correlation maps and their fixed points govern trainability and motivates critical initialization schemes that keep signal propagation non-degenerate over many layers \cite{Sch+17, pennington2017nonlinear}. 

Complementing the mean-field approaches, various studies on product of many large random matrices at network initialization also give further insights to deep network initialization and conditioning \cite{hanin2019finite, Hanin2019products, hanin2021non,li2025geometric, hanin2025globaluniversalitysingularvalues}.
Within these, an important direction is to study the \emph{joint} limits in which width and depth grow together ($N,L\to\infty$ concurrently), a regime where deterministic infinite-width recursions are no longer adequate because finite-width fluctuations can accumulate over a large number of layers. For the forward pass of a single input token in the proportional $N\propto L$ limit, \cite{Hanin2019products} demonstrated that in the linear networks, accumulated feature fluctuations lead to log-normal distributions in hidden layers and gradients, revealing a rich stochastic structure absent in the deterministic kernel limit. \cite{li2022neural} furthers this discovery to the flexible framework of NSDEs that incorporates non-linear activation functions, which are later extended by \cite{ noci2023shapedtransformerattentionmodels, li2024differentialequationscalinglimits,} to ResNets. This joint-scaling perspective is complementary to fixed-depth infinite-width neural matrix law frameworks that characterize broad classes of architectures at initialization \cite{yang2021tensorprogramsiiineural}. In the present work, these simultaneous width and depth scalings are particularly natural because they yield stochastic limiting objects for Gram matrices \cite{li2022neural}, allowing spectral observables and Bayesian quantities to be expressed as functionals of the terminal covariance state. 

\section{Overview of technical takeaways}\label{sec: technical results}
In this section, we go into details of our exact technical results following expressing the partition function with \Cref{thm: mlp part to nsde part}. Each \texttt{Takeaway} is meant to be an informal statement which we expand in the appendix. They include both the recovery and extension of prior results derived with different methods as special cases of our framework, as well as novel results demonstrating our capability of solving previously open regimes. For example, while the positive temperature case $\beta<\infty$ was somewhat difficult to handle in the earlier frameworks of \cite{hanin2023bayesian, hanin2024bayesian}, a non-trivial $\beta$ fits easily into our calculations of \eqref{eqn: integral bayes partition} with \eqref{eqn: s_tau}. Finally, we provide insights extracted from Bayesian inference computations in different cases to answer questions regarding model architecture selection and feature learning.
\subsection{Recovering and extending prior results in Bayesian inference}\label{sec: recover results}
Our first set of results will come from special cases in \eqref{eqn: nsde tau}. Specifically, we consider when either the diffusion term dominates the SDE (equivalently $\alpha=0, s\to\infty$), which yields the deep linear network, or when the drift term dominates the SDE (equivalently $c_{\operatorname{act}}=\tau^{-1}\to \infty, \tau\to 0$), which yields the infinite-width $N>\!\!>LP$ limit.
\paragraph{Linear networks and diffusion-only} The simplest starting point in analyzing the network \eqref{eqn: forward pass} is when $\phi_s = \operatorname{id}$ the identity map. In this case $f$ is simply a deep linear network one only needs the limiting product of many large random matrices. Here, the Bayesian partition function $Z_{\beta}(x_0, \kappa)$ has a mathematically {closed} solution in the form of Meijer-G integrals \cite{hanin2023bayesian} at the $N, L, P\to\infty, LP/N \in\Theta(1)$ limit. Below in \texttt{Takeaway 1.1}, we recover two main results in \cite{hanin2023bayesian}.
\begin{itemize}
\item 
\texttt{Takeaway 1.1}: \textbf{Deep linear network}: The Bayesian partition function in the linear network $Z^{\operatorname{linear}}_{\beta}(x_0, \kappa)$ has the explicit integration form (for any $\tau$):
$$Z_\beta^{\operatorname{linear}(\tau )}\propto\int_{\R^P}\mathbb{E}\left[
    \exp\l(-\frac{1}{2 \beta}\|p\|^2+i p^{\top}Y-\frac{1}{2}\exp \left(-\frac{\tau }{P}+\sqrt{\frac{2 \tau }{P}} G\right)v^{\top}\Phi_0 v\r)\right] \d p$$
where expectation is over $G\sim\N(0, 1)$ and $v = [p^\top, \kappa]^\top$. As an exemplary corollary,
the first-order expansion at $\tau=0$ with $\beta=\infty$ is:
$$\left.\frac{\partial\log  Z^{\operatorname{linear}(\tau)}_{\infty}(\cdot )}{\partial \tau}\right|_{\tau=0}=\frac{1+O(P^{-1})}{4P}\left[\left(P(1-\nu_0)+{\kappa^2}\|x_0^\perp\|^2\right)^2+2P(1-2\nu_0)\right]$$
where $\nu_0 = P^{-1}Y^\top\Phi_0^{-1}Y$ is the linear interpolator norm of $\D$, and $x_0^\perp\perp\operatorname{span}(X_{\D})$ is the orthogonal component projecting test  $x_0$ to the span of the training inputs $X_{\D}$.
\end{itemize}
This matches at large $P$ the expansion from \cite{hanin2023bayesian}. We will show \texttt{Takeaway 1.1} with our NSDE framework in \Cref{sec: special 1 linear}.
\paragraph{Infinite-width kernel limit.}
While non-linear networks are remarkably more complicated than their linear counterparts, the infinite-width $\tau_{\operatorname{terminal}} =LP/N \to 0$ limit, which can be shown to effectively kills the diffusion terms by a time-change and leaves an equivalent \textbf{Neural Covariance ODE}, is solvable. In this limit, without the activation function, the conjugate kernel of linear networks is trivially $\Phi_L=\Phi_0$ with the identity feature map. This trivial reduction is also true if $s\in\Theta(N^{ 1/2})$ by \Cref{prop: nsde 1}. Alternatively, we show that a stronger shaping $s\in\Theta(L^{ 1/2})$ (which is equivalent to $s\in\Theta(N^{ 1/2})$ with positive $\lim L/N>0$)
induces a non-trivial deterministic kernel method when $\lim L/N= 0$. This fact was established in \cite{hanin2024bayesian} with cubic activation $\phi_{\operatorname{base}} = x+\psi x^3, \phi_s(x) = x+\frac{\psi}{L}x^3$, which we will extend to general activation functions. Since an ODE guaranties a deterministic answer, Bayesian inference reduces to a kernel method, which gives a deterministic yet training-data agnostic feature map (that explicitly only depends on $c, \phi_{\operatorname{base}}$). 
\begin{itemize}
\item 
\texttt{Takeaway 1.2}: \textbf{Data agnostic feature map in the infinite width limit}: In the large width $N>\!\!>LP$ limit, Bayesian inference (at any temperature) for MLP \eqref{eqn: forward pass} with shaping \eqref{eqn: shape relu} and \eqref{eqn: shape smooth} where $s=(c_{\operatorname{act}}/L)^{-1/2}$, reduces to a data-agnostic kernel method \eqref{eqn: kernel method} with a feature map $f_{0}(x; c_{\operatorname{act}}, \phi_{\operatorname{base}})\in\R^{\infty}$.
In the zero temperature case, the predictive posterior at test $x_0$ is a Gaussian with mean and variance
    \begin{equation}\label{eqn: kernel mean and var}
        \mu_0(x_0) = \sum_{\mu\in\D}a_\mu Y_\mu,\quad \sigma_0^2(x_0) = \|f_0(x_0)^{\perp}\|^2
    \end{equation}
    where $f_0(x_0)=f_0(x_0)^{\perp}+\sum_{\mu\in\D}a_\mu f_0(x_\mu)$ and $f_0(x_0)^{\perp}$ is orthogonal to $\operatorname{span}_\mu(f_0(x_\mu))$.
\end{itemize}
We justify this statement precisely in \Cref{sec: special 2 neural ode}.
It will also be apparent that $c_{\operatorname{act}}\in o(1)$ implies a trivial feature map (equivalent to the linear network), and $c_{\operatorname{act}}=\infty$ admits a unique (although possibly divergent) limit for the kernel, thus the criticality  $s=\Theta(L^{1/2})$ at the infinite-width limit.
Since we mostly assume a fixed $\phi_{\operatorname{base}}$, we will also use $f_0(x; c_{\operatorname{act}})$ to short-hand the feature map and discuss the role of $c_{\operatorname{act}}$ later.

\subsection{Small depth-time perturbation}\label{takeaway: perturb}
Now we turn  to the more general case where neither the infinite width non-linearity induced kernel nor the linear network effect dominates the partition function. 
Firstly, we need to decide the shaping factor of $\phi_s$, such that non-linearity effects contribute at the same order as linear networks. Motivated by the infinite-width limit as well as \eqref{eqn: nsde tau}, we consider the following shaping:
\begin{equation}\label{eqn: shape def}
        s_{\operatorname{shape}} = \sqrt{c_{\operatorname{act}}^{-1}{N}/{P} }=\sqrt{c_{\operatorname{act}}^{-1}{L}/{\tau}}
\end{equation}
with $c_{\operatorname{act}}\in\Theta(1)$. 
Our main technical result concerns perturbatively solving \eqref{eqn: bayes partition def} under \eqref{eqn: shape def} in the first order of $\tau = LP/N$. Noticing that $Z^{(0)}$ is the kernel method \eqref{eqn: kernel method} with input kernel $\bar\Phi = \Phi_0$ regardless of model, we can thus compare $\frac{1}{Z^{(0)}}\l.\partial_\tau Z\r|_{\tau = 0} = \l.\partial_\tau \log Z\r|_{\tau = 0}$.
\begin{itemize}
    \item \texttt{Takeaway 2}: \textbf{Perturbative expansion of $Z^{(\tau)}$.} 
    The 1st order partial derivative of small $\tau = \frac{LP}{N}$ in the joint limit \eqref{eqn: limit def} with shaping according to \eqref{eqn: shape def}:
    \begin{equation}\label{eqn: perturbative meta}
        \left. {\partial_\tau \log Z_{\beta}^{(\tau)}(\cdot )}\right|_{\tau=0}\!\!=
        \left.\partial_\tau \log Z^{\operatorname{linear}}_{\beta}(\cdot ) \right|_{\tau=0}\!\!+c_{\operatorname{act}} \left. {\partial_c\log  Z^{\operatorname{kernel}}_{\beta}(\cdot ; f_0(\cdot; c))} \right|_{c=0}
    \end{equation}
    is the \textit{sum} of first order derivative in the
    \textit{linear network component} \text{in \texttt{Takeaway 1.1}} and 
    \textit{infinite-width kernel component} with partition function \eqref{eqn: kernel method}  \text{in \texttt{Takeaway 1.2}}.
\end{itemize}
Balancing the two terms in \texttt{Takeaway 2} immediately implies the critical $c_{\operatorname{act}}\in\Theta(1)$ in \eqref{eqn: shape def} is indeed the appropriate scaling. We show \eqref{eqn: perturbative meta}, as well as the following set of corollary takeaways in \Cref{sec: small depth-time expansion}. 


\paragraph{Perturbative Bayesian evidence} Eqn. \eqref{eqn: perturbative meta} is already a strong statement that allows us to draw interesting corollaries. For instance, this implies that first order Bayes {evidence} $\P_{\operatorname{prior}}(\D) \propto Z_{\beta}(0)$ in \eqref{eqn: bayes partition def} is just the sum of the linear network evidence and the infinite-width kernel evidence. See \Cref{sec: interpretations} for an elaborate account.
\begin{itemize}
    \item \texttt{Takeaway 2.1}: \textbf{Bayesian evidence at small $\tau $.} 
    In the first order of small $\tau$ expansion in the joint limit \eqref{eqn: limit def}, the Bayesian evidence is given by (at $\tau=0$ and up to $O(P^{-1})$ remaining terms):
    \begin{equation}\label{eqn: perturbative evidence}
 \frac{\partial\log  Z^{(\tau)}_{\beta}(0)}{P\partial \tau}\!\! =\frac{1}{4}\l(\nu_0^\beta-\frac{1}{P}\operatorname{Tr}\left(\Phi_0(\Phi_0+\beta^{-1} \I)^{-1}\right)\r)^2+ \frac{c_{\operatorname{act}}}{P}\frac{\partial \log Z^{\operatorname{kernel}}_{\beta}(0; f_0)}{\partial c}
    \end{equation}
    where $\nu_0^\beta = P^{-1}Y^\top(\Phi_0+\beta^{-1} \I)^{-1}\Phi_0(\Phi_0+\beta^{-1} \I)^{-1}Y$ is the (normalized) RKHS norm of the Kernel Ridge Regression estimator. We normalize both sides by $1/P$ because the log marginal of data is $\log\P(\D)\propto P$. When $c_{\operatorname{act}}=0$, the remaining first term is equivalent to that of the linear network in \texttt{Takeaway 1.1}.
    \end{itemize}
It is notable that while the linear network evidence can be shown to always increase with depth, such is not true with the infinite-width kernel in \texttt{Takeaway 1.2}. As a result, depending on the exact scale of $c_{\operatorname{act}}$, we may encounter phase transitions in the form of ``depth is only beneficial if non-linearity is not too strong''. We list some specific corollaries of this flavor with concrete $\D$ in \Cref{sec: takeaways}.

\paragraph{Adaptive prediction kernel at $\tau>0$}

Another corollary of \eqref{eqn: perturbative meta} is that we can derive an equivalent feature map for the predictive posterior in the zero temperature regime. We show this in \Cref{sec: first-order predictive posterior}.
\begin{itemize}
    \item \texttt{Takeaway 2.2}: \textbf{Equivalent data-dependent feature map at small $\tau$.} 
    In the first order of small $\tau$ expansion in the joint limit \eqref{eqn: limit def} at $\beta=\infty$, the Bayesian predictive posterior of at test data $x_0$ is (approximately) equivalently given by kernel method with the training-data dependent feature map:
    \begin{equation}\label{eqn: first order feature map}
        x_\alpha\to \sum_{x_\mu\in\D} a_\mu f_{0}(x_\mu; c_{\operatorname{act}}\tau)+\lrp{1+\frac{1}{2}(\nu_0-1)\tau} f^{\perp}_{0}(x_\alpha; c_{\operatorname{act}}\tau)
    \end{equation}
    where $a_\mu$ follows the definition in \eqref{eqn: kernel mean and var}. The predictive mean remains un-changed from \eqref{eqn: kernel mean and var}, whereas the variance is reshaped as a function of the training dataset.
    \end{itemize}
By approximately given, we mean that we drop $O(P^{-1})+O(\tau^2)$ terms in the partition function $Z$.
An important remark is that while the predictive posterior is equivalently given by a kernel method with an explicit feature map, the full Bayesian partition function is \emph{not} given by the same kernel method (in contrast to \texttt{Takeaway 1.2}). Such equivalence is only possible by the fact that the leading order derivative of the partition function $\partial_\tau\E_{\operatorname{post}}[e^{-i\kappa f(x_0)}] = \partial_\tau \l[\log Z(x_0, \kappa)-\log Z(x_0, 0)\r]$ is a \textit{quadratic} polynomial of $\kappa$ (after dropping $o(1)$ terms) at  $\tau=0$ (\Cref{prop: first order predictive posterior gaussianity}).

\subsection{Interpretations of perturbative results}\label{sec: takeaways}
Given the above results, it is natural to ask: what do our technical takeaways imply in terms of model selection and prediction? In particular, what data distribution does deeper network or non-linearity prefer or penalize, as evaluated by the Bayesian evidence $\partial_\tau\P(\D)\geq 0$?

We make some concrete interpretations of our findings when $\Phi_0 = \rho\I + o(1)$, i.e., when the input data have a fixed norm $\rho>0$ and are (approximately) orthogonal to each other. Such conditions typically arise  when input data are from isotropic Gaussians, or when batch norm is applied.
In the proportional joint limit with a small terminal $LP/N$, we justify two novel findings from our Bayes calculations in the form of when data favors depth or non-linear activation strength in the model likelihood. We show these results in \Cref{sec: interpretations}.

\begin{itemize}
    \item \texttt{Takeaway 3.1:} \textbf{Smooth activation.} Under normalized inputs, for deep MLPs with $c_{\operatorname{act}}$-shaped smooth activation \eqref{eqn: shape smooth} and \eqref{eqn: shape def}, depth is beneficial (at first order $LP/N$) to fitting if and only if:
    \begin{align*}
        0\leq &\frac{1}{4}\left(\frac{\rho\|y\|^2}{\left(\rho+\beta^{-1}\right)^2}-\frac{\rho}{\rho+\beta^{-1}}\right)^2 +c_{\operatorname{act}} \left\{\frac{\rho\left(2\left(c_1+c_2\right) \rho-\left(3 c_1+2 c_2\right)\right)}{2 (\rho+\beta^{-1} )^2}\|y\|^2\right.
        \\&\quad +\left.\frac{c_1 \rho^2}{2 (\rho+\beta^{-1} )^2}\|\tilde{y}\|^2-\frac{\left(3 c_1+2 c_2\right) \rho(\rho-1)}{2 (\rho+\beta^{-1} )}\right\}
    \end{align*}
    where $c_1 =\frac{1}{4} (\phi_{\operatorname{base}}''(0))^2, c_2 = \frac{1}{2} \phi_{\operatorname{base}}'''(0)$, and $\|y\|^2 = \frac{1}{P}\sum_\D Y_\alpha^2, \|\tilde y\|^2 = \frac{1}{P}(\sum_\D Y_\alpha)^2$.
    \item \texttt{Takeaway 3.2:} \textbf{ReLU activation.}  For deep MLPs with $c_{\operatorname{act}}$-shaped ReLU activation \eqref{eqn: shape relu} and \eqref{eqn: shape def}, depth is beneficial  if and only if:
    $$0\leq \frac{1}{4}\left(\frac{\rho\|y\|^2}{\left(\rho+\beta^{-1}\right)^2}-\frac{\rho}{\rho+\beta^{-1}}\right)^2+c_{\operatorname{act}}\cdot \frac{\left(c_{+}-c_{-}\right)^2\rho}{4 \pi(\rho+\beta^{-1})^2}\left( \|\tilde y\|^2- \|y\|^2\right)$$
    where $\|y\|^2 = \frac{1}{P}\sum_\D Y_\alpha^2, \|\tilde y\|^2 = \frac{1}{P}(\sum_\D Y_\alpha)^2$.
\end{itemize}
\subsection{Perturbative convergence from MLP to SDE}\label{sec: formality}

Because our above takeaways use the SDE partition functions, not statistics from the actual MLP,
it is thus necessary to explain how they are mathematically rigorous and what types of limits are necessary to translate SDE takeaways to MLPs. \Cref{thm: mlp part to nsde part} gives us point-wise convergence (for each $\tau$) along the forward passage, but perturbative takeaways do not immediately follow. While taking the perturbative limit in discrete Markov chains is not well-defined, we can show a stronger convergence from MLP layer-wise to SDE, which we will justify (alongside with the proof of \Cref{thm: mlp part to nsde part}) in \Cref{sec: convergence apdx}.

\begin{itemize}
    \item \texttt{Takeaway 4:} \textbf{Uniform convergence of generator.} Fix $v\in\R^{P+1}$ and time $ T>0$ and let $F(\Phi)\triangleq e^{-\frac{1}{2} v^{\top} \Phi v}$. Define the rescaled discrete-time difference per layer in \eqref{eqn: forward pass}
$$
D_N F(t, \Phi_0)\triangleq N\left(\mathbb{E}_{\operatorname{prior}}\left[F\left(\Phi^{(\lfloor t N\rfloor+1)}\right)\right]-\mathbb{E}_{\operatorname{prior}}\left[F\left(\Phi^{(\lfloor t N\rfloor)}\right)\right]\right),
$$
and the limit (taken from applying Ito's Lemma on $F(\Phi_t)$)
$$
D F\left(t, \Phi_0\right)\triangleq\mathbb{E}_{\eqref{eqn: nsde informal}}\left[\mathcal{L} F\left(\Phi_t\right)\right]=\mathbb{E}_{\eqref{eqn: nsde informal}}\left[F\left(\Phi_t\right)\left(-\frac{1}{2}\alpha_{\operatorname{act}} v^{\top} b\left(\Phi_t\right) v+\frac{1}{4}\left(v^{\top} \Phi_t v\right)^2\right)\right].
$$
Then for any compact $K$,
$$
\sup _{t \in[0, T], \Phi_0 \in K}\left|D_N F\left(t, \Phi_0\right)-D F\left(t, \Phi_0\right)\right| \to  0.
$$
\end{itemize}
As a corollary with dominated convergence theorem, for any $x_0, \kappa$ and $\ell, \beta>0$, we have
\begin{equation}\label{eqn: perturb Z}
     \frac{N}{P}\l(Z_\beta^{(N, \ell+1, \D)}-  Z_\beta^{(N, \ell, \D)}\r)\to  \l.\partial_\tau Z_\beta^{(\tau)} ({x}_{0}, {\kappa} )\r|_{\tau = P\ell/N}.
\end{equation} We present this convergence formally in \Cref{prop: unif convergence}.

\printbibliography
\appendix

\newpage
\section{Computing the prior: Neural Covariance SDE} \label{sec: prior with nsde}
Let us first lay out the foundations of NSDE as we need.
Recall again the forward pass \eqref{eqn: forward pass}:
$$    z_1=\frac{1}{\sqrt{N_{0}}} W_0 x, \quad \phi_{\ell}=\phi_s(z_{\ell}), \quad z_{\ell+1}=\sqrt{\frac{c}{N}} W_{\ell} \phi_{\ell}, \quad  y=z_{\mathrm{out}}=\sqrt{\frac{c}{N}} W_{\mathrm{out}} \phi_L\in\R$$
Consider network weights at initialization with shaped activation $\phi_s$ in \eqref{eqn: forward pass} on a finite dataset $\D$.
The key is to analyze how $\Phi^{(\ell)}=\l[\frac{c}{N}\langle\phi_{\ell}^\alpha, \phi_{\ell}^\beta \rangle\r]_{\alpha, \beta\in [P]}$ changes from layer $\ell$ to $\ell+1$. Using the conditional Gaussian structure of $\left(g_{\ell}^\alpha\right)_\alpha$ and a Taylor expansion of $\phi_s$ at zero, \cite{li2022neural} shows that for large $n$ the update:
$$
\Phi^{(\ell+1)}=\Phi^{(\ell)}+\frac{1}{N} b(\Phi^{(\ell)})+\frac{1}{\sqrt{N}} (\Phi^{(\ell)})^{1 / 2} \xi_{\ell}(\Phi^{(\ell)})^{1 / 2}  +\text {higher ordered terms}
$$
where $\xi_{\ell}\in\R^{P\times P}$ is sampled from a Gaussian Orthogonal Ensemble , and $b$ is a deterministic bias functions that connects with the activation. Intuitively, the $1 / \sqrt{N}$ zero-mean fluctuations come from the variance averaging over $N$ neurons; the drift $b(\Phi)$ comes from the first non-vanishing nonlinear terms in the Taylor series of the activation function.
We then let $N \to  \infty$ and $L \to  \infty$ with
$
\frac{L}{N} \to  T \in(0, \infty)
$
and define the continuous ``depth-time'' index $t \in[0, T]$ via $\ell \approx t N$. Interpolating $\Phi^{(\ell)}$ as a càdlàg process $\Phi^{(\lfloor t N\rfloor)}\triangleq \Phi_t^{(N)}$, Ethier-Kurtz-type results on convergence of Markov chains to diffusions imply our convergence to the solution of a stochastic differential equation, as we see below.
\subsection{Local convergence}
Let us first set up the proper notion of local convergence needed from the MLP to the Neural Covariance SDE.

\begin{definition}[Local convergence in the Skorokhod topology]
We say a sequence of processes $X^n$ converge locally to $X$ in the Skorokhod topology under a continuous test function $f: \cdot \to [0, \infty]$ if for any $r>0$, the following stopping times
$$
\tau^{(n)}_r\triangleq \inf \left\{t \geq 0:f(X_t^n) \geq r\right\}, \quad \tau_r\triangleq \inf\left\{t \geq 0: f(X_t) \geq r\right\}
$$
has that $X_{t \wedge \tau^{(n)}}^n\Rightarrow X_{t \wedge \tau}$ in the Skorokhod topology.
Furthermore, we say that the process $X_t$ does not have finite-time explosion if $\P(\lim_{r\to\infty}\tau_r=\infty)=1$.
\end{definition}
Building on the definition, we can immediately show the following (proof deferred to \Cref{sec: lemmas}).
\begin{lemma}[Non-explosion of pre-limit]\label{lem: finite non-explosion}
    Suppose a sequence of processes $X^n$ converge locally to $X$ in the Skorokhod topology under a test function $f: \cdot \to [0, \infty]$, and that the limit process $X_t$ has continuous paths and does not have finite-time explosion. Then for any $r, t> 0$:
    $$\limsup_n\P\l(\tau_r^{(n)}\leq t\r)\leq \P(\tau_r\le t)$$
    and as a result $\lim_{r\to\infty}\limsup_n\P\l(\tau_r^{(n)}\leq t\r)=0$.
\end{lemma}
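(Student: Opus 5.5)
The plan is to express the event $\{\tau_r^{(n)}\le t\}$ through the stopped process $X^n_{\cdot\wedge\tau_r^{(n)}}$, so that the local convergence hypothesis can be applied directly, and then conclude with the Portmanteau theorem. Suppose $\tau_r^{(n)}\le t$. Then the stopped path $Y^n\triangleq X^n_{\cdot\wedge\tau^{(n)}_r}$ satisfies $\sup_{s\le t} f(Y^n_s)\ge r$, up to the usual caveat at a jump time. Since a càdlàg path has right limits, the value of $f$ at $\tau^{(n)}_r$ is $\ge r$ because the set $\{f\geq r\}$ is closed and $f$ is continuous. To get a closed set that behaves well under the Skorokhod topology, I would work with a slightly smaller level: fix $r'<r$ and set
\[
C_{r'}\triangleq\Bigl\{y:\ \sup_{s\le t} f(y_s)\ge r'\Bigr\}.
\]
I would then check that $C_{r'}$ is closed in the Skorokhod topology. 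The point is that for every $r'<r$ we have $\{\tau_r^{(n)}\le t\}\subseteq\{Y^n\in C_{r'}\}$.

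Before applying Portmanteau there is a technical issue: the functional $y\mapsto\sup_{s\le t}f(y_s)$ is upper semicontinuous in the Skorokhod $J_1$ topology only at paths that are continuous at time $t$, and this is where continuity of the limit enters. To sidestep the issue I would enlarge the window: replace $t$ by $t+\delta$ in the definition of $C_{r'}$. Any sequence $y^k\to y$ in $J_1$ with $\sup_{s\le t}f(y^k_s)\ge r'$ gives, after the time changes $\lambda_k\to\mathrm{id}$, points $s_k\le t+\delta$ with $f(y_{s_k})$ asymptotically $\ge r'$. Passing to a subsequence $s_k\to s\le t+\delta$, the values $y(s_k)$ converge to either $y(s)$ or $y(s-)$, so the corresponding closure condition holds in the limit. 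This makes $C^{\delta}_{r'}$ closed, in the sense that it contains every limit point of sequences from $C_{r'}$.

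With that in place, Portmanteau gives
\[
\limsup_n\P(\tau_r^{(n)}\le t)\le\limsup_n\P(Y^n\in C_{r'})\le\P\bigl(Y\in\overline{C_{r'}}\bigr)\le\P\Bigl(\sup_{s\le t+\delta}f(X_{s\wedge\tau_r})\ge r'\Bigr),
\]
where $Y$ is the stopped limit. Because $X$ has continuous paths, the event on the right is $\{\tau_{r'}\le t+\delta\}$. It remains to let $\delta\downarrow0$ and $r'\uparrow r$. The events $\{\tau_{r'}\le t+\delta\}$ decrease to $\{\tau_{r'}\le t\}$ by right-continuity of $s\mapsto\mathbf 1\{\tau\le s\}$. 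As $r'\uparrow r$, continuity of $X$ gives $\tau_{r'}\uparrow\tau_r$, since the hitting times of increasing levels by a continuous process are left-continuous in the level. Hence $\bigcap_{r'<r}\{\tau_{r'}\le t\}=\{\tau_r\le t\}$, and monotone continuity of measure yields the claimed inequality.

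For the second statement, non-explosion means $\tau_r\to\infty$ almost surely as $r\to\infty$. Therefore $\P(\tau_r\le t)\to0$ by dominated convergence, and combined with the first part this gives $\lim_{r\to\infty}\limsup_n\P(\tau_r^{(n)}\le t)=0$.

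The step I expect to be the main obstacle is the closedness / semicontinuity argument for the hitting-time event in the $J_1$ topology. The limit-level steps ($\delta\downarrow0$, $r'\uparrow r$) are routine once continuity of $X$ is used.
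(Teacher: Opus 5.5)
Your proof is correct, but it takes a different route from the paper.

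\textbf{The paper's route.} The paper never uses a path functional. It relies on the stopped path being frozen at the hitting point. Since $F_r=\{f\ge r\}$ is closed and $X^n$ is right-continuous, $X^n_{\tau_r^{(n)}}\in F_r$. Hence $\{\tau_r^{(n)}\le t\}\subseteq\{X^n_{a\wedge\tau_r^{(n)}}\in F_r\}$ for any fixed $a>t$. Because the stopped limit has continuous paths, the evaluation map $y\mapsto y(a)$ is a.s.\ continuous under it. The continuous mapping theorem and Portmanteau on the finite-dimensional state space then give
\[
\limsup_n\P(\tau_r^{(n)}\le t)\le\P(X_{a\wedge\tau_r}\in F_r)=\P(\tau_r\le a),
\]
and letting $a\downarrow t$ finishes.

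\textbf{Your route.} You apply Portmanteau on path space to the running supremum $y\mapsto\sup_{s\le t}f(y_s)$. This forces a closedness analysis in $J_1$ and two extra limits, $\delta\downarrow0$ and $r'\uparrow r$. The second limit needs the left-continuity of $r'\mapsto\tau_{r'}$, which you justify correctly. What your approach buys is that it does not use the freezing property at all: it bounds hitting probabilities for any $J_1$-convergent sequence of c\`adl\`ag processes. The paper's approach buys brevity, since it only needs one-dimensional marginals.

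\textbf{A correction to your diagnosis.} The obstacle you flag is misdiagnosed. The functional $y\mapsto\sup_{s\le t}f(y_s)$ is upper semicontinuous at \emph{every} c\`adl\`ag path when $f$ is continuous. What fails at a path jumping at time $t$ is lower semicontinuity, as in $\mathbf 1_{[t+1/k,\infty)}\to\mathbf 1_{[t,\infty)}$. Your own subsequence argument shows this: the limiting time satisfies $s\le t$. If $u_k\downarrow t$, right-continuity gives $y(t)$; otherwise you obtain $y(s)$ or $y(s-)$ with $s\le t$.

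So $C_r=\{y:\sup_{s\le t}f(y_s)\ge r\}$ is already closed, and neither the enlargement to $t+\delta$ nor the lowered level $r'<r$ is needed. With $r'=r$ and $\delta=0$, your Portmanteau bound reads $\P\bigl(\sup_{s\le t}f(X_{s\wedge\tau_r})\ge r\bigr)=\P(\tau_r\le t)$ directly, because $X$ is continuous.
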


The main reason for the necessity of local convergence is that the drift terms in \Cref{eqn: nsde informal} are not always globally Lipschitz. To that end, we had to introduce the stopping times $\tau_r$ such that under $F_r$, the drift is bounded and Lipschitz. Let us now specify the exact statement for convergence from MLP to Neural Covariance SDE below.
\subsection{ReLU networks} \label{sec: relu nsde}
Consider the network defined in \eqref{eqn: forward pass} where $\phi_s(x) = s_{+} \max (x, 0)+s_{-} \min (x, 0)$ is (leaky) ReLU-like with shaping from \eqref{eqn: shape relu}. The NSDE is formally defined as follows.
\begin{proposition}[NSDE in ReLU networks, see Theorem 3.2 in \cite{li2022neural}]\label{thm: nsde relu}
In the notations of \eqref{eqn: forward pass},
let $\Phi^{(\ell)}\in\R^{P\times P}$ where $\Phi^{(\ell)}_{\alpha \beta}\triangleq \frac{c}{N}\left\langle\phi_{\ell}^\alpha, \phi_{\ell}^\beta\right\rangle$ and shaped activation
$$\phi_s(x)=x+\frac{1}{s}\l( c_{+}\max (x, 0)+ {c_{-}} \min (x, 0)\r),\quad s=(N/\alpha_{\operatorname{act}})^{1/2} .$$
In the limit $N \to \infty, \frac{L}{N} \to T<\infty$, the interpolated process $\Phi^{(\lfloor t n\rfloor)}$ converges locally in distribution in the Skorokhod topology of $D_{\mathbb{R}_{+}, \mathbb{R}^{P\times P}}$ under the test function $f(\Phi) = \max\{(\min_\alpha \Phi_{\alpha\alpha})^{-1}, \max_\alpha\Phi_{\alpha\alpha}\}$ without finite-time explosion to the solution of the SDE
$$
\d \Phi_t=\alpha_{\operatorname{act}}\cdot b(\Phi_t) \d t+\Phi_t^{1/2}\d B_t\Phi_t^{1/2}, \quad \Phi_0=\left[\frac{1}{N_{0}}\left\langle x^\alpha, x^\beta\right\rangle\right]_{1 \leq \alpha \leq \beta \leq P},
$$
where if we let $\rho(x)\triangleq\frac{\left(c_{+}-c_{-}\right)^2}{2 \pi}\left(\sqrt{1-x^2}-x\arccos x\right)$, $D_t \triangleq \operatorname{diag}(\Phi_t)$, then
$$
b(\Phi_t)=D_t^{1/2}\rho\l(D_t^{-1/2}\Phi_tD_t^{-1/2}\r)D_t^{1/2} \in\R^{P\times P}
$$
in which $\rho$ is applied entry-wise to the normalized $P\times P$ matrix.   
\end{proposition}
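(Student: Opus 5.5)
The plan is to follow the Ethier--Kurtz route for Markov chains converging to diffusions, localized by the stopping times built from the test function $f(\Phi)=\max\{(\min_\alpha\Phi_{\alpha\alpha})^{-1},\max_\alpha\Phi_{\alpha\alpha}\}$. This is the strategy of Theorem 3.2 in \cite{li2022neural}. The one thing to reconcile is that our drift is written in the matrix form $D^{1/2}\rho(D^{-1/2}\Phi D^{-1/2})D^{1/2}$ and the noise in the form $\Phi^{1/2}\d B_t\Phi^{1/2}$.

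\textbf{Step 1: one-layer moments.} Conditioned on $\Phi^{(\ell)}$, the pre-activations $z_{\ell+1}^i$, $i\le N$, are i.i.d.\ $\N(0,\Phi^{(\ell)})$ on the $P$ inputs. Hence
$$\Phi^{(\ell+1)}=\frac{c}{N}\sum_{i=1}^N \phi_s(z^i)\phi_s(z^i)^\top,$$
an average of $N$ i.i.d.\ matrices. Write $\phi_s(x)=x+s^{-1}\psi(x)$ with $\psi(x)=c_+x_++c_-x_-$. The first two conditional moments then follow from explicit Gaussian computations.
\begin{itemize}
\item The mean is $c\,\E[\phi_s(z)\phi_s(z)^\top]$. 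Since $\psi$ is positively homogeneous, the $s^{-1}$ and $s^{-2}$ pieces reduce to arc-cosine kernels evaluated at the normalized correlations and rescaled by $D^{1/2}$. The normalizing constant $c=c(s)=1-O(s^{-1})$ removes the diagonal part of the $O(s^{-1})$ correction. Here I must check that the linear-in-$s^{-1}$ off-diagonal term cancels or is absorbed, by choosing the centering convention of $c_\pm$ as in \cite{li2022neural}. With $s^{-2}=\alpha_{\operatorname{act}}/N$, the mean becomes $\Phi+N^{-1}\alpha_{\operatorname{act}}b(\Phi)+o(N^{-1})$, where $b$ involves $\rho(x)=\frac{(c_+-c_-)^2}{2\pi}(\sqrt{1-x^2}-x\arccos x)$.
\item The conditional covariance of the increment is $N^{-1}$ times the Wishart covariance
$$\mathrm{Cov}(\Delta_{\alpha\beta},\Delta_{\gamma\delta})=N^{-1}(\Phi_{\alpha\gamma}\Phi_{\beta\delta}+\Phi_{\alpha\delta}\Phi_{\beta\gamma})+o(N^{-1}).$$
I will verify that this is exactly the quadratic variation of $\Phi^{1/2}\d B\Phi^{1/2}$ with $B=(\tilde B+\tilde B^\top)/\sqrt2$.
\item Fourth moments of the increment are $O(N^{-2})$.
\end{itemize}
All errors are uniform on the compact set $\{f\le r\}$, since there the diagonal is bounded above and below and the normalized correlations lie in $[-1,1]$.

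\textbf{Step 2: localized convergence.} On $\{f\le r\}$ the drift $b$ is continuous and bounded. The diffusion coefficient $(\Phi,\cdot)\mapsto\Phi^{1/2}\cdot\Phi^{1/2}$ is continuous and its covariance is polynomial. The martingale problem for the limiting generator is well posed up to the exit time $\tau_r$, because the covariance is smooth in $\Phi$ and, by the next point, the drift coefficient admits a unique solution. I then apply Ethier--Kurtz Theorem 7.4.1 (Chapter 7) to the stopped chains: the drift and covariance limits from Step 1, together with the jump bound from the fourth moments, give convergence of $\Phi^{(\lfloor tN\rfloor)}_{t\wedge\tau^{(N)}_r}$ to $\Phi_{t\wedge\tau_r}$. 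The main obstacle is uniqueness. The map $x\mapsto\sqrt{1-x^2}-x\arccos x$ is only H\"older at $x=\pm1$, so $b$ is not Lipschitz. I will first argue that $\rho$ is $C^1$ with $\rho'(x)=-\arccos x$, which is bounded. That makes $b$ locally Lipschitz on $\{f\le r\}$ and gives strong uniqueness. This is the point to check carefully against \cite{li2022neural}.

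\textbf{Step 3: non-explosion.} Each diagonal entry satisfies
$$\d\Phi_{\alpha\alpha}=\alpha_{\operatorname{act}}\rho(1)\Phi_{\alpha\alpha}\,\d t+\sqrt2\,\Phi_{\alpha\alpha}\,\d W,$$
a geometric Brownian motion. By the $s_\tau$ computation it never hits $0$ or $\infty$ in finite time, so $\tau_r\to\infty$ almost surely. Finally, I check that the matrix form in Step 1 matches the coordinate form in \cite{li2022neural}, as in \Cref{lem: NSDE matrix}.
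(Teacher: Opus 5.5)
Your outline follows the route the paper relies on. The paper does not reprove this result. It cites Theorem 3.2 of \cite{li2022neural}, which is an Ethier--Kurtz argument localized by $f$, and it supplies \Cref{lem: NSDE matrix} to match $\Phi_t^{1/2}\,\mathrm{d}B_t\,\Phi_t^{1/2}$ to the vectorized noise. That lemma is exactly your Step 1 covariance check. The drift check you left open also closes, with no convention on $c_\pm$. Write $\psi(x)=\frac{c_++c_-}{2}x+\frac{c_+-c_-}{2}|x|$. Since $\E[z_\alpha|z_\beta|]=0$, the $O(s^{-1})$ term equals $(c_++c_-)\Phi_{\alpha\beta}/s$ in every entry, and $c$ cancels it exactly. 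The $s^{-2}$ coefficient is then $b(\Phi)=\frac{(c_+-c_-)^2}{4}(\E_{z\sim\N(0,\Phi)}[|z|\,|z|^\top]-\Phi)$, with $|z|$ taken entrywise, and this equals $D^{1/2}\rho(D^{-1/2}\Phi D^{-1/2})D^{1/2}$.

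The genuine gap is in Step 2, and it is not where you locate it. Since $\rho'(x)=-\frac{(c_+-c_-)^2}{2\pi}\arccos x$ is bounded, $\rho$ is Lipschitz on $[-1,1]$, so the drift causes no trouble. The diffusion does. The set $\{f\le r\}$ controls only the diagonal, so it contains singular PSD matrices. An example is $\Phi_x$ with unit diagonal and off-diagonal entry $x$, taken at $x=1$. Along this family, a fixed linear change of coordinates turns $\Sigma(\Phi_x)$ into $\operatorname{diag}(2(1-x)^2,\,1-x^2,\,2(1+x)^2)$. Hence every square root of the covariance, including $G\mapsto\Phi^{1/2}G\Phi^{1/2}$, has a singular value of order $\sqrt{1-x^2}$ and cannot be Lipschitz at $x=1$. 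So neither of your justifications works on $\{f\le r\}$: strong uniqueness does not follow from a Lipschitz drift, and a smooth but degenerate covariance does not give a well-posed martingale problem.

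The missing step is to keep the limit away from $\partial\mathcal{S}_+$. Under the paper's standing assumption $\Phi_0\succ0$, add $\lambda_{\min}(\Phi)^{-1}$ to the localization. There $\Phi\mapsto\Phi^{1/2}$ is smooth and your Lipschitz argument applies. Then show this extra stopping time never binds; for ReLU the formula for $b$ above makes this easy. One has $\mathrm{d}\log\det\Phi_t=\operatorname{Tr}(\mathrm{d}B_t)+(\alpha_{\operatorname{act}}\operatorname{Tr}(\Phi_t^{-1}b(\Phi_t))-\tfrac{P(P+1)}{2})\,\mathrm{d}t$. Since $\E[|z|\,|z|^\top]\succeq0$, also $\operatorname{Tr}(\Phi^{-1}b(\Phi))\ge-\frac{(c_+-c_-)^2}{4}P$. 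So $\log\det\Phi_t$ is bounded below by a drifted Brownian motion. Together with your geometric Brownian motion control of the diagonal, $\lambda_{\min}(\Phi_t)\ge\det\Phi_t/(\operatorname{Tr}\Phi_t)^{P-1}$ stays positive on every finite horizon, which recovers the diagonal-only localization in the statement. Alternatively, quote well-posedness directly from \cite{li2022neural}, as the paper implicitly does.
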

In particular, $\rho(1)=0$. As a result, the diagonal entries follow a geometric Brownian Motion without drift, which explains the finite-time non-explosion.
We will also use the fact that the drift-only ODE evolution (\Cref{thm: nsde relu} dropping the diffusion term) admits a deterministic feature map independent of $\D$.
\begin{proposition}[ODE feature map, ReLU]\label{prop: relu ode feature map}
    There exists a feature map $f: x\in\R^{N_0}, c\in\R  \to\R^{\infty}$ such that the $P\times P$ kernel matrix
    $$(\Phi_c)_{\alpha\beta}=\langle f( x_\alpha, c), f(x_\beta, c) \rangle$$
    satisfies the matrix ODE (following notations of \Cref{thm: nsde relu}),
    $$\d \Phi_c = b(\Phi_c) \d c,\quad b(\Phi_t)=D_t^{1/2}\rho\l(D_t^{-1/2}\Phi_tD_t^{-1/2}\r)D_t^{1/2} \in\R^{P\times P}$$
\end{proposition}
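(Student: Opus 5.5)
The plan is to reduce the matrix ODE to a scalar ODE for pairwise correlations. Next, define a kernel function $K_c(x,y)$ on all of $\R^{N_0}$ whose restriction to any finite dataset solves the ODE. Then show $K_c$ is positive semidefinite, and obtain $f(\cdot,c)$ from the Moore--Aronszajn (RKHS) construction. Because $b$ acts entrywise after normalization by $D_t$, the $(\alpha,\beta)$ entry of $b(\Phi)$ depends only on $\Phi_{\alpha\alpha},\Phi_{\beta\beta},\Phi_{\alpha\beta}$. The ODE therefore decouples into $2\times 2$ blocks, and this is what makes a dataset-independent feature map plausible.

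\emph{Step 1 (scalar reduction and well-posedness).} Since $\rho(1)=0$, the diagonal satisfies $\frac{\d}{\d c}\Phi_{\alpha\alpha}=0$, so $D_c=D_0$ with $(D_0)_{\alpha\alpha}=\|x_\alpha\|^2/N_0$. Writing $\Phi_{\alpha\beta}=\sqrt{D_{\alpha\alpha}D_{\beta\beta}}\, r_{\alpha\beta}$, the off-diagonal equation becomes the autonomous scalar ODE $\dot r=\rho(r)$ with $r(0)=\cos\angle(x_\alpha,x_\beta)$. We have $\rho'(x)=-\frac{(c_+-c_-)^2}{2\pi}\arccos x$, which is bounded on $[-1,1]$, so $\rho$ is Lipschitz there. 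Moreover $\rho(1)=0$ and $\rho(-1)=\frac{(c_+-c_-)^2}{2}\ge 0$, so $[-1,1]$ is forward invariant. Hence there is a unique global flow $R_c:[-1,1]\to[-1,1]$ for $c\ge 0$. We then define
$$K_c(x,y)\triangleq \frac{\|x\|\|y\|}{N_0}\,R_c\!\left(\frac{\langle x,y\rangle}{\|x\|\|y\|}\right).$$
By construction, the Gram matrix $[K_c(x_\alpha,x_\beta)]$ solves $\d\Phi_c=b(\Phi_c)\d c$ with $\Phi_0=\frac1{N_0}X^\top X$, for every finite $\D$ simultaneously.

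\emph{Step 2 (positive semidefiniteness, the main obstacle).} We must show $K_c$ is a PSD kernel. Schoenberg-type arguments on the Taylor coefficients of $R_c$ are not directly available, since $\rho$ itself has a negative linear coefficient. Instead we realize $K_c$ as a limit of manifestly PSD kernels via the infinite-width, depth-$n$ shaped network. Take $s^2=n/c$ and define the one-step Gaussian map $\mathcal T_s(\Phi)=c_s\,\E_{g\sim\N(0,\Phi)}[\phi_s(g)\phi_s(g)^\top]$. This map sends PSD matrices on any finite point set to PSD matrices, and it acts consistently across point sets, so it defines a map on kernels. The expansion underlying \Cref{thm: nsde relu} (a Stein/Taylor computation as in \cite{li2022neural}) gives $\mathcal T_s(\Phi)=\Phi+s^{-2}b(\Phi)+O(s^{-3})$, locally uniformly on matrices with diagonal bounded above and away from zero. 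In this expansion the linear $1/s$ term is absorbed by the normalization $c_s$, and the diagonal is preserved. Consequently $\mathcal T_s^{\circ n}$ is an Euler scheme with step $c/n$ for the ODE, with local error $o(1/n)$. The Lipschitz bound of Step 1 and a discrete Gr\"onwall argument then give $\mathcal T_s^{\circ n}(\Phi_0)\to\Phi_c$ entrywise. A pointwise limit of PSD kernels is PSD, so $K_c$ is PSD. The delicate part is making the $O(s^{-3})$ remainder uniform along the iteration. This uses the fact that the diagonals are exactly preserved at each step, which keeps the iterates in a compact set.

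\emph{Step 3 (feature map).} Since $K_c$ is a continuous PSD kernel on $\R^{N_0}$, it has a separable RKHS $\mathcal H_c\cong\ell^2=\R^\infty$ (via an orthonormal basis). Set $f(x,c)=K_c(x,\cdot)\in\mathcal H_c$, so that $\langle f(x_\alpha,c),f(x_\beta,c)\rangle=K_c(x_\alpha,x_\beta)=(\Phi_c)_{\alpha\beta}$, as required.

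For $c<0$ the backward flow need not preserve positivity, so I would state the result for $c\ge 0$, which is the only range used in the sequel since $c_{\operatorname{act}}\tau\ge0$. For negative $c$, I would first check whether $R_c$ for $c<0$ still yields a PSD kernel, for instance by numerically testing small Gram matrices, before attempting a proof.
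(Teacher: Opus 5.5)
Your proposal is correct, and its skeleton is the paper's: a restriction-consistent kernel on all of $\R^{N_0}$, positive semidefiniteness by approximation with network conjugate kernels, then Moore--Aronszajn. The real difference is the PSD step.

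The paper gets PSD-ness in one line. It cites the Neural Covariance ODE limit (\Cref{thm: neural cov ode}): random finite-width MLPs with $L=N^{2p}$ and $s\in\Theta(L^{1/2})$ converge weakly in the Skorokhod topology to the ODE. It then applies \Cref{lem: psd of limit}, which says PSD paths form a closed set. You instead use the deterministic infinite-width Gaussian recursion and read its iterates as an Euler scheme. That makes convergence an elementary Lipschitz/Gr\"onwall estimate with no stochastic-process machinery.

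You also make explicit several things the paper leaves implicit: the scalar flow $R_c$, invariance of $[-1,1]$, the frozen diagonal from $\rho(1)=0$, and continuity of $K_c$. Continuity is what makes the RKHS separable and justifies $\R^\infty\cong\ell^2$.

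The step you call delicate is in fact exact for leaky ReLU. Set $a=1+c_+/s$ and $b=1+c_-/s$. Positive homogeneity gives $c_s^{-1}=\tfrac12(a^2+b^2)$ and preserves the diagonal exactly. The one-step correlation map is then exactly $r\mapsto r+c_s s^{-2}\rho(r)$. This is an Euler step of size $c_s c/n=(c/n)(1+O(n^{-1/2}))$ with no remainder to control.

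The trade-off is that your argument relies on this ReLU structure. For the smooth drift of \Cref{prop: smooth ode feature map}, the diagonals move and the remainder is genuinely $O(s^{-3})$. There you would need the locally uniform estimate you sketch, whereas the paper's citation covers both activations at once.

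Your restriction to $c\ge 0$ is necessary, not merely cautious. The paper's argument also only reaches $c\ge 0$, since the ODE limit lives on $c\in[0,L/s^2]$. Take an antipodal pair $\{x,-x\}$: its correlation is $-1$, while $\rho(-1)=(c_+-c_-)^2/2>0$. Since $\rho\ge 0$ on $[-1,1]$, the correlation is nondecreasing in $c$. So for any $c<0$, the backward flow would need a correlation strictly below $-1$, which no PSD $2\times 2$ block allows. No numerical test is needed: the statement's ``$c\in\R$'' should be read as $c\ge 0$ whenever $c_+\neq c_-$.
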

\begin{proof}[Proof of \Cref{prop: relu ode feature map}]
    From \Cref{thm: neural cov ode} and \Cref{lem: psd of limit}, we know that the ODE path is always PSD. For any finite set $S \subset \mathbb{R}^{N_0}$, solve the ODE for the restriction $\Phi_c^S$. Since the vector field is entry-wise and compatible with restriction to subsets, these finite-dimensional solutions are consistent under restriction. Hence they define a positive-definite kernel $K_c\left(x, x^{\prime}\right)$ on the whole input space, and Moore-Aronszajn applies. Thus, a feature representation induced by the respective RKHS exists. Because this kernel operates on the entire space $\R^{N_0}$ of possible inputs, it cannot be dependent on the specific $\D$.
\end{proof}
\subsection{Smooth networks} 
\label{sec: smooth nsde}
Consider smooth base activation function $\phi_{\operatorname{base}}\in C^4(\R)$ before shaping via \eqref{eqn: shape smooth} that satisfies: 
\begin{enumerate}
    \item[\texttt{A1}] First two orders of derivatives $0=\phi(0)=\phi'(0)-1$.
    \item[\texttt{A2}] There exists some $C>0$ such that $|\phi^{(4)}(x)| \leq C\left(1+|x|^{C}\right)$.
    \item[\texttt{A3}] Negative third order derivative $\frac{3}{4} \phi^{\prime \prime}(0)^2+\phi^{\prime \prime \prime}(0) \leq 0$.
\end{enumerate}
The second and third assumptions are technical ones arising from the derivation of NSDE, we refer to \cite{li2022neural} for a careful narrative of their necessity. These assumptions were satisfied by common activation functions such as Sigmoid, Tanh, and Soft-Plus.
\begin{proposition}[See Theorem 3.9 and Proposition 3.7 in \cite{li2022neural}]\label{thm: nsde smooth}
    Assuming that $\phi$ satisfies assumptions \emph{\texttt{A1-3}}. Consider the forward pass \eqref{eqn: forward pass} with shaping $\phi_s(x)=s \phi(x / s), s= (\alpha_{\operatorname{act}}/N)^{-1/2}$.
In the limit $N \to \infty, \frac{L}{N} \to T<\infty$, the interpolated process $\Phi^{(\lfloor t n\rfloor)}$ converges locally in distribution in the Skorokhod topology of $D_{\mathbb{R}_{+}, \mathbb{R}^{P\times P}}$ under the $f(\Phi) = \max\{(\min_\alpha \Phi_{\alpha\alpha})^{-1}, \max_\alpha\Phi_{\alpha\alpha}\}$ without finite-time explosion to the solution of the SDE
$$
\d \Phi_t=\alpha_{\operatorname{act}}\cdot b(\Phi_t) \d t+\Phi_t^{1/2}\d B_t\Phi_t^{1/2}, \quad \Phi_0=\left[\frac{1}{N_{0}}\left\langle x^\alpha, x^\beta\right\rangle\right]_{1 \leq \alpha \leq \beta \leq P},
$$
Here $B_t = \frac{1}{\sqrt{2}}\l(\tilde B_t+\tilde B^{\top}\r)\in\R^{P\times P}$ where $\tilde B_t\in\R^{P\times P}$ has i.i.d. Brownian motion entries. The drift components $b(\Phi)$ is given by:
$$
b_{\alpha \beta}(\Phi)=\frac{1}{4}\phi^{\prime \prime}(0)^2\left[\Phi_{\alpha \alpha} \Phi_{\beta \beta}+\Phi_{\alpha \beta}\left(2 \Phi_{\alpha \beta}-3\right)\right]+\frac{1}{2}\phi^{\prime \prime \prime}(0) \Phi_{\alpha \beta}\left(\Phi_{\alpha \alpha}+\Phi_{\beta \beta}-2\right).
$$
for all $\alpha,\beta \in [P]$.
\end{proposition}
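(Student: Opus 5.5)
The statement is a citation of Theorem 3.9 and Proposition 3.7 in \cite{li2022neural}. My plan is to recast that result in the present notation rather than re-derive it, checking three things: the one-step drift and diffusion of the Markov chain \eqref{eqn: Phi evo demo}, the identification of the diffusion term with $\Phi_t^{1/2}\d B_t\Phi_t^{1/2}$, and local convergence without explosion.

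\textbf{One-step moments.} Condition on $\Phi^{(\ell)}$. The vector $(z_{\ell+1}^\alpha)_\alpha$ has $N$ i.i.d.\ rows distributed as $\N(0,\Phi^{(\ell)})$, and $\Phi^{(\ell+1)}_{\alpha\beta}=\frac{c}{N}\sum_i\phi_s(z^\alpha_i)\phi_s(z^\beta_i)$. With $s^2=N/\alpha_{\operatorname{act}}$, I Taylor-expand $\phi_s(x)=x+\frac{\phi''(0)}{2s}x^2+\frac{\phi'''(0)}{6s^2}x^3+R$, where by \texttt{A2} the remainder satisfies $|R|\le Cs^{-3}|x|^4(1+|x|^C)$. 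Wick's formula then gives
\[
\E\big[\phi_s(g_\alpha)\phi_s(g_\beta)\big]=\Phi_{\alpha\beta}+\frac{1}{s^2}\Big[\tfrac14\phi''(0)^2(\Phi_{\alpha\alpha}\Phi_{\beta\beta}+2\Phi_{\alpha\beta}^2)+\tfrac12\phi'''(0)\Phi_{\alpha\beta}(\Phi_{\alpha\alpha}+\Phi_{\beta\beta})\Big]+O(s^{-3}).
\]
The normalizing constant satisfies $c^{-1}=1+s^{-2}\big(\tfrac34\phi''(0)^2+\phi'''(0)\big)+O(s^{-3})$. Multiplying it in produces the $-3\Phi_{\alpha\beta}$ and $-2\Phi_{\alpha\beta}$ corrections, which recovers the stated $b_{\alpha\beta}$ after multiplying by $N/s^2=\alpha_{\operatorname{act}}$. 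The conditional covariance comes from the linear term: $N\,\cov(\Phi^{(\ell+1)}_{\alpha\beta},\Phi^{(\ell+1)}_{\gamma\delta})=\Phi_{\alpha\gamma}\Phi_{\beta\delta}+\Phi_{\alpha\delta}\Phi_{\beta\gamma}+O(s^{-1})$. I also need uniform bounds on higher conditional moments of the increments, which follow from Gaussian hypercontractivity and \texttt{A2}.

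\textbf{Identifying the diffusion term.} I will check, as in \Cref{lem: NSDE matrix}, that $\Phi^{1/2}\d B\,\Phi^{1/2}$ has quadratic covariation $(\Phi_{\alpha\gamma}\Phi_{\beta\delta}+\Phi_{\alpha\delta}\Phi_{\beta\gamma})\d t$. This holds because $B=(\tilde B+\tilde B^\top)/\sqrt2$ has $\d B_{ij}\d B_{kl}=(\delta_{ik}\delta_{jl}+\delta_{il}\delta_{jk})\d t$, so the generator of the SDE matches the one computed above.

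\textbf{Convergence and non-explosion.} Stop the chain at $\tau_r^{(N)}$. Below $\tau_r$ the diagonals are confined to $[r^{-1},r]$, and by Cauchy--Schwarz the off-diagonals are then bounded too. On this compact set the drift and diffusion coefficients are smooth and hence Lipschitz, and the moment errors are uniformly $o(1)$. The Ethier--Kurtz Markov-chain-to-diffusion theorem (Thm.~7.4.1 there) then gives convergence of the stopped processes. For non-explosion, note that $b_{\alpha\alpha}=\Phi_{\alpha\alpha}(\Phi_{\alpha\alpha}-1)\big(\tfrac34\phi''(0)^2+\phi'''(0)\big)$, so by \texttt{A3} the drift pushes $\Phi_{\alpha\alpha}$ toward $1$. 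Apply Itô's formula to the Lyapunov function $V=\sum_\alpha(\Phi_{\alpha\alpha}+\Phi_{\alpha\alpha}^{-1})$; the diffusion of each diagonal is geometric, $\sqrt2\,\Phi_{\alpha\alpha}\,\d W$, so this should give $\mathcal LV\le CV$, which excludes both blow-up and hitting zero in finite time. I expect this Lyapunov estimate, together with the uniform remainder control from \texttt{A2} on the stopped region, to be the main obstacle. In practice it is exactly the content of Proposition 3.7 in \cite{li2022neural}, which I would invoke directly.
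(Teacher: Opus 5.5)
Your proposal is correct and takes essentially the same route as the paper. The paper does not re-derive this result: it imports it from \cite{li2022neural} and uses \Cref{lem: NSDE matrix} to identify the matrix noise $\Phi_t^{1/2}\d B_t\Phi_t^{1/2}$ with the vectorized $\Sigma^{1/2}$ form used there.

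Your Wick computation of the drift, including the $c^{-1}$ correction, checks out. Your Lyapunov estimate also works: for each diagonal entry $x=\Phi_{\alpha\alpha}$ one gets $\mathcal{L}(x+x^{-1})=\alpha_{\operatorname{act}}\bigl(\tfrac34\phi''(0)^2+\phi'''(0)\bigr)(x-1)^2(x+1)/x+2/x\le 2(x+x^{-1})$ under \texttt{A3}.

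One small caution concerns the claim that the coefficients are ``smooth and hence Lipschitz'' on the region $\{f\le r\}$. The covariance $\Sigma(\Phi)$ is polynomial there. However, its square root (equivalently $\Phi^{1/2}$) is not Lipschitz at rank-deficient $\Phi$, and that region contains such $\Phi$ because it only constrains the diagonal. So well-posedness of the stopped martingale problem is the ingredient you should take from the cited work, not from that claim.
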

Following \Cref{prop: relu ode feature map}, we also have the ODE feature map statement.
\begin{proposition}[ODE feature map, Smooth]\label{prop: smooth ode feature map}
     There exists a feature map $f: x\in\R^{N_0}, c\in\R  \to\R^{\infty}$ such that the $P\times P$ kernel matrix
    $(\Phi_c)_{\alpha\beta}=\langle f( x_\alpha, c), f(x_\beta, c) \rangle$
    satisfies the matrix ODE $\d \Phi_c = b(\Phi_c) \d c$ where
    $$b_{\alpha \beta}(\Phi)=\frac{1}{4}\phi^{\prime \prime}(0)^2\left[\Phi_{\alpha \alpha} \Phi_{\beta \beta}+\Phi_{\alpha \beta}\left(2 \Phi_{\alpha \beta}-3\right)\right]+\frac{1}{2}\phi^{\prime \prime \prime}(0) \Phi_{\alpha \beta}\left(\Phi_{\alpha \alpha}+\Phi_{\beta \beta}-2\right).$$
\end{proposition}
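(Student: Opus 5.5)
The plan follows the ReLU case, \Cref{prop: relu ode feature map}, but we must now verify positive semidefiniteness and global existence of the smooth-drift ODE ourselves, since the drift here is polynomial rather than bounded. We will construct a consistent family of finite-dimensional kernels and then invoke Moore--Aronszajn.

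\textbf{Step 1 (restriction-compatible ODE).} For a finite set $S=\{x_1,\dots,x_m\}\subset\R^{N_0}$, set $\Phi_0^S=[\frac{1}{N_0}\langle x_\alpha,x_\beta\rangle]$ and solve $\d\Phi^S_c=b(\Phi^S_c)\d c$. The entry $b_{\alpha\beta}$ depends only on $\Phi_{\alpha\alpha},\Phi_{\beta\beta},\Phi_{\alpha\beta}$. Hence the principal submatrix of the solution for $S$ indexed by $S'\subset S$ solves the ODE for $S'$, and by uniqueness the two agree. We therefore define $K_c(x,x')$ as the $(1,2)$ entry of the solution for $S=\{x,x'\}$, which is consistent across all $S$ containing $x,x'$.

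\textbf{Step 2 (existence on the relevant interval).} The vector field is locally Lipschitz, since it is polynomial, so a local solution exists. The diagonal decouples:
\[
\dot\Phi_{\alpha\alpha}=\left(\tfrac34\phi''(0)^2+\phi'''(0)\right)\Phi_{\alpha\alpha}(\Phi_{\alpha\alpha}-1)
\]
(this computation is to be checked). Under \texttt{A3} the coefficient is nonpositive, so for $c\ge0$ the diagonal stays between its initial value and $1$ and cannot blow up. For the off-diagonal entries, the only other possible blow-up source is the quadratic term $\Phi_{\alpha\beta}^2$. We will control it using Cauchy--Schwarz, $|\Phi_{\alpha\beta}|\le\sqrt{\Phi_{\alpha\alpha}\Phi_{\beta\beta}}$, which holds as long as PSD is preserved. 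This couples Step 2 to Step 3: run a continuation argument, where on any interval on which the solution exists and is PSD, the entries are bounded by the diagonal bounds, so the solution extends.

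\textbf{Step 3 (PSD preservation), the main obstacle.} We would try first to appeal to \Cref{thm: neural cov ode} and \Cref{lem: psd of limit} exactly as in the ReLU proof. The idea is that the ODE is the $\tau\to0$, drift-dominated limit of the NSDE of \Cref{thm: nsde smooth}. That NSDE is the limit of the MLP covariance Markov chain, whose states $\frac{c}{N}\langle\phi^\alpha_\ell,\phi^\beta_\ell\rangle$ are Gram matrices and hence PSD, and the PSD cone is closed, so the limit is PSD.

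If that route is not available directly, a self-contained alternative is to consider the minimal eigenvector $u$ at a boundary time where $\lambda_{\min}=0$. One would then show $u^\top b(\Phi)u\ge0$ via a Schur-product decomposition:
\[
b=\tfrac14\phi''(0)^2\left[dd^\top+2\Phi\circ\Phi\right]+\Phi\circ M,
\]
where $d=\operatorname{diag}\Phi$ and $M$ has entries combining $-\tfrac34\phi''(0)^2$ and $\tfrac12\phi'''(0)(d_\alpha+d_\beta-2)$. The Schur product $\Phi\circ\Phi$ is PSD, and the terms $\Phi\circ(\text{rank-one sums})$ reduce to $\operatorname{diag}(w)\Phi+\Phi\operatorname{diag}(w)$ plus a scalar multiple of $\Phi$. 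All of these vanish against $u$ when $\Phi u=0$, and the $dd^\top$ and $\Phi\circ\Phi$ parts contribute nonnegatively. This gives inward-pointing drift on the boundary of the cone, and a standard Nagumo-type invariance argument, perturbing by $\epsilon\I$ if needed, then yields invariance of the PSD cone.

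\textbf{Step 4 (feature map).} By Steps 1--3, for each $c$ the function $K_c$ is a symmetric positive-definite kernel on $\R^{N_0}$. Moore--Aronszajn then gives an RKHS $\mathcal H_c$ with $K_c(x,x')=\langle K_c(x,\cdot),K_c(x',\cdot)\rangle_{\mathcal H_c}$. Choosing an orthonormal basis identifies $\mathcal H_c$ with a subspace of $\ell^2\subset\R^\infty$, and we set $f(x,c)$ to be the coordinates of $K_c(x,\cdot)$. Since $K_c$ is defined on all of $\R^{N_0}$, the map $f$ does not depend on $\D$.
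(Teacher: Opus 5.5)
Your proposal is correct and follows the paper's proof. The paper likewise uses restriction-consistency of the entrywise ODE, gets PSD-ness of the flow exactly via \Cref{thm: neural cov ode} and \Cref{lem: psd of limit}, and then applies Moore--Aronszajn. Your Step 2 existence argument (diagonal logistic under \texttt{A3}, then Cauchy--Schwarz) and your self-contained Nagumo check of cone invariance are also correct: for $\Phi u=0$ one gets $u^\top b(\Phi)u=\tfrac14\phi''(0)^2[(d^\top u)^2+2u^\top(\Phi\circ\Phi)u]\ge 0$. Those steps supply details the paper leaves implicit, and the restriction to $c\ge 0$ is the right one.
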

\begin{proof}
    The proof follows immediately from the proof of \Cref{prop: relu ode feature map} as well as combining  \Cref{thm: neural cov ode} and \Cref{lem: psd of limit}.
\end{proof}

\paragraph{PSD-ness of the SDE solution}
As a final remark, because the MLP conjugate kernel is PSD by definition, and that the SDE kernel \eqref{eqn: nsde informal} has a continuous path, (local) Skorokhod convergence implies weak convergence pointwise (up to each local stopping time). 
As a result, both convergences in \Cref{thm: nsde relu} and \Cref{thm: nsde smooth} guarantee positive semidefinite-ness on the Neural SDE solution $\Phi_t\succeq 0$ for any $t\geq 0$ in the limit. See \Cref{lem: psd of limit}.
\subsection{Special case 1: deep linear networks}\label{sec: special 1 linear}
Let us justify \texttt{Takeaway 1.1}, the main result in \cite{hanin2023bayesian}, as the first special case.
An important aspect of \Cref{prop: nsde 1} is that the NSDE separates two
effects in the Gram matrix $\Phi_t$ dynamic: a multiplicative stochastic component already present in deep linear networks and a drift
term induced by shaped nonlinearity. Indeed, with a deep linear network (identity activation function),  taking $\phi_s(x) = x$ yields equivalently $c_\pm = 0$ in \Cref{thm: nsde relu}. In this case, the NSDE becomes
$$\d \Phi_t=  \Phi_t^{1/2}\d B_t\Phi^{1/2}, \quad t_{\operatorname{depth}} = \ell/N,\quad t_{\operatorname{terminal}} = L/N$$
which is diffusion-only. The terminal time $t_{\operatorname{terminal}}$ parameterizes the action at fixed $P$. As in the main text, for a large $P\to\infty$, a  analysis of the operator norm yields
$$\|\Phi_t^{1/2}G\Phi^{1/2}_t\|_{op}\asymp \sqrt{P}\|\Phi_t\|_{op},$$
for a symmetric iid Gaussian $G$. To balance the sides we will naturally make a time-change $\d\tau = P\d t$ such that
\begin{equation}\label{eqn: linear NSDE tau}
    \d \Phi_\tau=  \frac{1}{\sqrt{P}}\Phi_\tau^{1/2}\d B_\tau\Phi_\tau^{1/2},\quad \tau_{\operatorname{depth}} = P\ell/N,\quad  \tau \in [0, \tau_{\operatorname{terminal}}=LP/N]
\end{equation}
which recovers the $LP/N$ rate independently derived in different ways from \cite{hanin2023bayesian, hanin2024bayesian, li2025geometric}.
Let us now solve \eqref{eqn: bayes partition def} in the case of deep linear networks by \eqref{eqn: linear NSDE tau}. Simple algebra reveals that $s_\tau(v) \triangleq v^{\top}\Phi_\tau v\in\R$ follows the SDE (for a fixed $p\in\R^P$)
$$\d s_\tau(v) = \frac{1}{\sqrt{P}}v^{\top}\Phi_\tau^{1/2}\d B_\tau\Phi_\tau^{1/2} v =\sqrt{\frac{2}{{P}}}s_\tau(v)\d W_\tau, \quad s_0 = v^{\top}\Phi_0v$$
where $(W_\tau)$ is a standard  Brownian motion in $\R$. This means that $s_\tau(p)$ is a Geometric Brownian motion with a closed terminal expression
\begin{equation}
    s_{\tau }(v) =_d s_0(v) \exp \left(-\frac{\tau }{P}+\sqrt{\frac{2 \tau }{P}} G\right), \quad G\sim\N(0, 1)
\end{equation}
and thus the partition function in the propositional limit reads (in this subsection we will use $Z^{(\tau)}$ to denote linear networks for convenience)
$$Z_\beta^{(\tau )}\left({x}_{0}, {\kappa}\right)\propto\int_{\R^P}\mathbb{E}\left[
    \exp\l(-\frac{1}{2 \beta}\|p\|^2+i p^{\top}Y-\frac{1}{2}\exp \left(-\frac{\tau }{P}+\sqrt{\frac{2 \tau }{P}} G\right)v^{\top}\Phi_0 v\r)\right] \d p$$
where the expectation is taken over $G\sim\N(0, 1)$. This simple derivation justifies the first part of \texttt{Takeaway 1.1}. To take the perturbative limit $\partial_\tau Z^{(\tau)}$ without differentiating under the integral, consider applying Ito's lemma on $s_\tau = v^\top\Phi_\tau v\geq 0$:\footnote{We drop the distinction between $\frac{1}{P}$ versus $\frac{1}{P+1}$ here because technically the NSDE is now applied on the $(P+1)\times (P+1)$ Gram matrix on $\D\cup\{x_0\}$. The $P$ vs. $P+1$ distinction is absorbed in $O(P^{-1})$ factors that will be present in the partition function in any case.}
$$\d s_\tau = \sqrt{\frac{2}{P}}s_\tau d B_\tau,\quad \d e^{-s_\tau/2} = e^{-s_\tau/2}\cdot \frac{1}{4P}s_\tau^2\d\tau + (\cdot) \d W_\tau$$
and as a result  (see \Cref{lem: differentiation of expectation} for justification)
$$\l.\partial _\tau\E[e^{-s_\tau/2}]\r|_{\tau=0} = \frac{1}{4P}\exp \left(-\frac{s_0}{2}\right) s_0^2$$
and since $\exp(-x/2), x\geq 0$ is bounded below and above
$$\l.\partial _\tau Z_{\beta}^{(\tau)}(x_0, \kappa)\r|_{\tau=0}  =  \int_{\R^P} \exp \left(-\frac{1}{2\beta}\|p\|^2+iY^\top p -\frac{s_0}{2}\right) \cdot\frac{1}{4P}s_0^2\d p$$
This is a Gaussian integral because $s_0$ is a quadratic function of $p$, so $\partial _\tau Z$ can be seen as the expectation of $\frac{1}{4P}s_0^2$ up to a normalizing constant. In fact, the normalizing constant is exactly
$$\int_{\R^P} \exp \left(-\frac{1}{2\beta}\|p\|^2+iY^\top p -\frac{1}{2}s_0\right)=Z^{(\tau=0)}_\beta\left(x_0, \kappa\right)$$
by \eqref{eqn: integral bayes partition}. As a result, we have that:
$$\l. {\partial _\tau \log Z_{\beta}^{(\tau)}(x_0, \kappa)}  \r|_{\tau=0}\
=\l.\frac{1}{Z^{(0)}_\beta\left(x_0, \kappa\right)} \partial _\tau Z_{\beta}^{(\tau)}(x_0, \kappa)\r|_{\tau=0}  = \E_{p\sim\N(\cdot, \cdot )}\l[\frac{1}{4P}s_0^2\r]$$
where $p$ follows the (complex) Gaussian distribution
\begin{equation}\label{eqn: int Gaussian def}
    p\sim \mathcal{N}\left( (\Phi^{\beta}_{pp})^{-1} (i  Y- \Phi_{p\kappa} \kappa), \quad  (\Phi^{\beta}_{pp})^{-1}\triangleq \l(\Phi_{pp}+\frac{1}{\beta}\I\r)^{-1}\right)
\end{equation}
where $\Phi_0 = \l[\begin{array}{cc}
     \Phi_{pp}\in\R^{P\times P} &  \Phi_{p\kappa}\\
      \Phi_{\kappa p}& \Phi_{\kappa\kappa} \in\R
\end{array}\r]$ is the decomposition of input Gram matrix. This gives us a closed-form formula for the first-order perturbation of the linear network partition function without integrating. 

Let us now focus on the $\beta=\infty$ case and evaluate the expectation directly 
\begin{equation}\label{eqn: s poly of kappa}
    s_0=p^\top\Phi_{pp}p+2\kappa p^\top\Phi_{p\kappa}+\kappa^2\Phi_{\kappa\kappa}
\end{equation} is a quadratic polynomial of a Gaussian. When $\beta=\infty$, simple calculation yields that:
\begin{equation}\label{eqn: linear exp 1}
    \E_{p\sim\eqref{eqn: int Gaussian def}} \l[s_0^2\r]=\l[ 
    \l(-P+ Y^\top\Phi_{pp}^{-1}Y- {\kappa^2} \l(
    \Phi_{\kappa\kappa}-\Phi_{\kappa p}\Phi_{pp}^{-1}\Phi_{p\kappa}
    \r)\r)^2
    +2P-4Y^\top\Phi_{pp}^{-1}Y
    \r]
\end{equation}
which directly proves \texttt{Takeaway 1.1}.
\paragraph{Dropping $O(P^{-1})$ terms.}
Expanding \eqref{eqn: linear exp 1} and denoting $\nu_0 = P^{-1}Y^\top\Phi_{pp}^{-1}Y$, $\|x_0^\perp\|^2 = \Phi_{\kappa\kappa}-\Phi_{\kappa p}\Phi_{pp}^{-1}\Phi_{p\kappa}$ gives
$$\l.\partial _\tau \log Z_{\infty}^{(\tau)}(x_0, \kappa)\r|_{\tau=0} =\frac{1}{4}\l[P(1-\nu_0)^2+2(1-2\nu_0)+2(1-\nu_0)\|x_0^\perp\|^2\kappa^2+\frac{1}{P}\|x_0^\perp\|^4\kappa^4\r]$$
is a polynomial of $\kappa$. Keeping only the top order (in terms of $P$) in each coefficient yields
$$\l.\partial _\tau \log Z_{\infty}^{(\tau)}(x_0, \kappa)\r|_{\tau=0} =\frac{1+O(P^{-1})}{4}\l[P(1-\nu_0)^2+2(1-\nu_0)\|x_0^\perp\|^2\kappa^2\r].$$
As a sanity check, this scale is correct because $\partial_\tau\log Z_\infty(0) = \partial_\tau\log \P(\D)\in\Theta(P)$ and that $\partial_\tau\var(f_{\operatorname{post}}(x_0))\in\Theta(1)$. Therefore, taking $P\to\infty$ is well-defined.
\paragraph{Positive temperature $\beta^{-1}>0$.}
Finally, let us address the effect of temperature. Plugging in the completed Gaussian $p\sim \mathcal{N}\left( (\Phi^{\beta}_{pp})^{-1} (i  Y- \Phi_{p\kappa} \kappa),  (\Phi^{\beta}_{pp})^{-1}\triangleq \l(\Phi_{pp}+\frac{1}{\beta}\I\r)^{-1}\right)$ into the expectation $\frac{1}{4P}\E\l[(p^\top\Phi_{pp}p+2\kappa p^\top\Phi_{p\kappa}+\kappa^2\Phi_{\kappa\kappa})^2\r]$ yields the polynomial expansion
$$\frac{1}{4 P} \mathbb{E}\left[s_0^2\right]=\frac{C_0}{4 P}+\frac{C_1}{4 P} \kappa+\frac{C_2}{4 P} \kappa^2+\frac{C_3}{P} \kappa^3+\frac{C_4}{4 P} \kappa^4,$$
where denoting $\I_\beta = \Phi_{pp} (\Phi_{pp}^\beta)^{-1}\prec \I$, we specifically need the following coefficients
$$
\begin{aligned}
C_0&=\l[\left(\operatorname{Tr}(\I_\beta)-Y^\top (\Phi_{pp}^\beta)^{-1}\I_\beta Y\right)\r]^2+2 \operatorname{Tr}\left((\I_\beta)^2\right)-4 Y^{\top} (\Phi_{pp}^\beta)^{-1} (\I_\beta)^2 Y, \\
C_3&= i\beta^{-1}\l[\Phi_{\kappa\kappa}-\Phi_{\kappa p}(\Phi_{pp}^\beta)^{-1}\Phi_{p\kappa}-\beta^{-1}\Phi_{\kappa p}(\Phi_{pp}^\beta)^{-2}\Phi_{p\kappa}\r]\Phi_{\kappa p} (\Phi_{pp}^\beta)^{-2}Y\\
C_4&= \l[\Phi_{\kappa\kappa}-\Phi_{\kappa p}(\Phi_{pp}^\beta)^{-1}\Phi_{p\kappa}-\beta^{-1}\Phi_{\kappa p}(\Phi_{pp}^\beta)^{-2}\Phi_{p\kappa}\r]^2.
\end{aligned}
$$
Under our standard assumptions on $\Phi_0$, one has that $\Phi_{\kappa\kappa}=\|x_0\|^2\in\Theta(1), \|\Phi_{\kappa p}\|^2\in\Theta(1)$ so $C_4\in O(1)$. Furthermore, $\|Y\|\in O(\sqrt{P})$ so $C_3\in  O(\sqrt{P})$. In other words,
$$\frac{1}{4 P} \mathbb{E}\left[s_0^2\right]=\frac{C_0}{4 P}+\frac{C_1}{4 P} \kappa+\frac{C_2}{4 P} \kappa^2+O(P^{-1/2})$$
is \textit{still} (approximately) a quadratic polynomial of $\kappa$.

Another identity we will use later on Bayesian evidence ($\kappa=0$) at positive temperature $\beta<\infty$ by plugging in $s_0 = p^\top \Phi_{pp}p$ and \eqref{eqn: int Gaussian def} into $\E[s_0^2]$ is that
$$\frac{1}{4P}\E [s_0^2]=\frac{1}{4P}\l[\left(\operatorname{Tr}(\I_\beta)-Y^\top (\Phi_{pp}^\beta)^{-1}\I_\beta Y\right)\r]^2+\frac{1}{2P} \operatorname{Tr}\left((\I_\beta)^2\right)-\frac{1}{P} Y^{\top} (\Phi_{pp}^\beta)^{-1} (\I_\beta)^2 Y$$
Keeping only the leading orders of large $P$, we are left with
\begin{equation}\label{eqn: linear evidence temp}
    \l.\partial_\tau\log Z^{\operatorname{(\tau)}}_\beta(0)\r|_{\tau=0}=\frac{1}{4P}\left[\operatorname{Tr}(\Phi_{pp}(\Phi_{pp}^\beta)^{-1})-Y^\top (\Phi_{pp}^\beta)^{-1}\Phi_{pp}(\Phi_{pp}^\beta)^{-1}Y\right]^2+O(1)
\end{equation}
in which $\beta=\infty$ recovers Corollary 3.9 in \cite{hanin2023bayesian}.

\subsection{Special case 2: Neural ODE in the infinite-width limit}\label{sec: special 2 neural ode}
Let us now study the second special case, which involves the infinite-width limit and $ t_{\operatorname{terminal}}= L/N\to 0$.
At a first glance, \eqref{eqn: nsde informal} when $t_{\operatorname{terminal}}=0$ will only return $\Phi_{L}=\Phi_0$ which leads to kernel method with the trivial identity feature map. Let us copy the NSDE with $s=\sqrt{N/\alpha_{\operatorname{act}}}$ and a fixed $P$ here as
$$\d \Phi_t=\alpha_{\operatorname{act}}\cdot b(\Phi_t) \d t+\Phi_t^{1/2}\d B_t\Phi_t^{1/2}, \quad t\in [0, L/N].$$
With a direct time-change $\d c= \alpha_{\operatorname{act}}\d t$, we have that
\begin{equation}\label{eqn: node informal}
    \d \Phi_c=b(\Phi_c) \d c+\frac{1}{\sqrt{\alpha_{\operatorname{act}}}}\Phi_c^{1/2}\d B_c\Phi_c^{1/2},\quad c\in [0, c_{\operatorname{terminal}} =  \alpha_{\operatorname{act}}t=L/s^2].
\end{equation}
\Cref{eqn: node informal} post-time change now allows sending $t\to 0$ so long as $c_{\operatorname{terminal}}>0$, and we end up with the ODE $\d \Phi_c=b(\Phi_c) \d c$. In other words, when we take the shaping  $ s = (c_{\operatorname{terminal}} /L)^{-1/2}$, the infinite-width limit admits a unique ODE parameterized by $c$.

Let us now be more precise in the exact limit taken and MLP convergence to the ODE by the following theorem in \cite{li2024differentialequationscalinglimits} which covers the limit $L = N^{1-\epsilon}$. The strict $N\to\infty$ first, then $L\to\infty$ (effectively $p=0$) limiting case has been derived using independent methods in \cite{zhang2022deep,hanin2024bayesian}. The resulting ODE remains unchanged.
\begin{proposition}[Neural Covariance ODE, Proposition 3.4 in \cite{li2024differentialequationscalinglimits}]\label{thm: neural cov ode}
Fix any $p\in (0, 1/2)$ and $P>0$ and consider the limit where $L = N^{2p}\to\infty, s \in\Theta(L^{1/2})$. Conjugate kernel evolution in the shaped ReLU MLP (resp. shaped smooth MLP)
converges to the drift-only ODE $\d \Phi_c=b(\Phi_c) \d c, c\in [0, L/s^2]$ weakly with respect to the Skorokhod topology of $D_{\R_+, \mathcal{S}^P}$.
\end{proposition}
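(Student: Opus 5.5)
The plan is to view the statement as a Markov-chain-to-ODE convergence of Ethier--Kurtz type. Here the diffusion coefficient vanishes in the chosen time scale, so the limit is deterministic. The discrete chain is indexed by the time variable $c = \ell/s^2$, so the step size is $\Delta c = 1/s^2 \asymp 1/L$. In this variable the covariance recursion $\Phi^{(\ell)}\to\Phi^{(\ell+1)}$ has two parts:
\begin{itemize}
\item a conditional drift $\mathbb{E}[\Phi^{(\ell+1)}-\Phi^{(\ell)}\mid\Phi^{(\ell)}] = s^{-2} b(\Phi^{(\ell)}) + O(s^{-3})$ for smooth activations. For shaped ReLU the correction comes from the exact Gaussian arc-cosine formulas and is also lower order.
\item a conditional covariance of order $1/N$ per layer. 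Over the $L$ layers these accumulate to $L/N = N^{2p-1}\to 0$, since $p<1/2$.
\end{itemize}
This is exactly the time-change \eqref{eqn: node informal}, where the diffusion prefactor is $(L/N)^{1/2}/\sqrt{c_{\operatorname{terminal}}}$ up to constants.

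\textbf{Step 1: one-step moments.} Conditioned on $\Phi^{(\ell)}$, the rows of $z_{\ell+1}$ are i.i.d. $\N(0,\Phi^{(\ell)})$. Then $\Phi^{(\ell+1)}_{\alpha\beta} = \frac{c}{N}\sum_i \phi_s(z^\alpha_i)\phi_s(z^\beta_i)$ is an average of $N$ i.i.d. terms. For smooth $\phi$, I will Taylor expand $\phi_s(x)=x+\frac{\phi''(0)}{2s}x^2+\frac{\phi'''(0)}{6s^2}x^3+R$. Assumption \texttt{A2} bounds $R$ with polynomial moments, and the normalizing constant satisfies $c = 1 - O(s^{-2})$. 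Computing Gaussian moments then gives
\[
\mathbb{E}[\Phi^{(\ell+1)}_{\alpha\beta}\mid\Phi^{(\ell)}] = \Phi^{(\ell)}_{\alpha\beta} + s^{-2}b_{\alpha\beta}(\Phi^{(\ell)}) + O(s^{-3}\,\mathrm{poly}(\|\Phi^{(\ell)}\|)).
\]
This is the same computation that produces the drift in \Cref{thm: nsde smooth}. For ReLU the analogous expansion comes from the closed-form arc-cosine kernel. The conditional covariance is $\frac{1}{N}\,\mathrm{poly}(\|\Phi^{(\ell)}\|)$, and fourth moments are $O(N^{-2})$.

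\textbf{Step 2: localization.} I will localize with the stopping time $\tau_r$ built from the test function $f(\Phi)=\max\{(\min_\alpha\Phi_{\alpha\alpha})^{-1},\max_\alpha\Phi_{\alpha\alpha}\}$, as in the local-convergence definition. On $\{f\le r\}$ the drift $b$ is bounded and Lipschitz, and the error constants in Step 1 are uniform. I will then write the chain in Doob form,
\[
\Phi^{(\ell)} = \Phi_0 + \sum_{k<\ell} s^{-2} b(\Phi^{(k)}) + M_\ell + E_\ell .
\]
Over $\ell\le L\wedge\tau_r$, the error term satisfies $E_\ell = O(L s^{-3}) = O(L^{-1/2})$. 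Doob's inequality gives $\mathbb{E}\sup_\ell\|M_\ell\|^2 \lesssim L/N\to 0$.

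\textbf{Step 3: Gr\"onwall and removal of the localization.} Comparing the Euler-type sum with the ODE solution and applying discrete Gr\"onwall, the interpolated process converges uniformly in probability on $[0,L/s^2]\wedge\tau_r$. Uniform convergence to a continuous limit implies Skorokhod convergence. To remove the localization, I will check that the ODE does not explode:
\begin{itemize}
\item for ReLU, $\rho(1)=0$, so the diagonal is constant;
\item for smooth activations, the diagonal obeys a scalar ODE $\dot d = (\tfrac34\phi''(0)^2+\phi'''(0))\,d(d-1)$. Assumption \texttt{A3} makes this drift non-positive toward $d=1$, so the diagonal stays bounded above and away from zero.
\end{itemize}
Then, exactly as in \Cref{lem: finite non-explosion}, $\limsup_N\P(\tau_r^{(N)}\le T)\to 0$ as $r\to\infty$.

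\textbf{Main obstacle.} I expect the uniform control of the Taylor remainder in Step 1 to be the hardest part. It must hold uniformly in $\Phi$ on the localized set, and it must account for the renormalization constant $c$ and for the first-layer input $\Phi_0$. I would handle this by conditioning on $\Phi^{(\ell)}$ and bounding all remainders by polynomial Gaussian moments, using \texttt{A2} and the localization.
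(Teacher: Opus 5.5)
The paper gives no proof of this statement; it imports it from Li--Nica. The only mechanism the paper indicates is the time change \eqref{eqn: node informal}, whose diffusion prefactor is $s/\sqrt{N}\asymp\sqrt{L/N}=N^{p-1/2}\to 0$. In that picture the ODE is the zero-diffusion case of the same Ethier--Kurtz-type Markov-chain-to-diffusion theorem used for the NSDE.

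Your route is more direct and takes a genuinely different path. It uses a Doob decomposition in the clock $c=\ell/s^2$, an $L^2$ maximal bound of order $L/N$ on the martingale part, an $O(Ls^{-3})=O(L^{-1/2})$ bound on the accumulated drift error, and discrete Gr\"onwall against the ODE.

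Your one-step computations are right:
\begin{itemize}
\item For smooth $\phi$, the $s^{-1}$ term vanishes by parity. The expansion $c=1-s^{-2}(3c_1+2c_2)+O(s^{-3})$ supplies the $-3c_1\Phi_{\alpha\beta}$ and $-2c_2\Phi_{\alpha\beta}$ terms of $b$.
\item For ReLU, $c=1+O(s^{-1})$, but the exact identity $c\,\E[\phi_s(g_\alpha)\phi_s(g_\beta)]=\Phi_{\alpha\beta}+c\bigl(\frac{c_+-c_-}{2s}\bigr)^2\bigl(\E|g_\alpha||g_\beta|-\Phi_{\alpha\beta}\bigr)$ still makes the error $O(s^{-3})$.
\end{itemize}
Your approach gives convergence uniformly in probability with an explicit rate, and it needs no tightness or martingale-problem uniqueness. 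The cited route instead handles the ODE and SDE regimes with a single theorem.

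One step is incomplete as written: non-explosion of the ODE. Your test function $f$ only sees the diagonal. So the claim that $b$ is bounded and Lipschitz on $\{f\le r\}$ holds only on $K_r=\{f\le r\}\cap\mathcal{S}_+^P$. That is harmless for the chain, which is PSD. But the Gr\"onwall comparison also needs the ODE path to stay in a compact set on $[0,T]$, and controlling the diagonal does not give this. For smooth $\phi$ the off-diagonal equation is of Riccati type, $\dot\Phi_{\alpha\beta}=2c_1\Phi_{\alpha\beta}^2+\dots$, which can blow up in finite time.

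You must show that the ODE preserves $|\Phi_{\alpha\beta}|\le\sqrt{\Phi_{\alpha\alpha}\Phi_{\beta\beta}}$. This is a short computation. Set $q=\Phi_{\alpha\beta}/u$, with $u=\sqrt{\Phi_{\alpha\alpha}\Phi_{\beta\beta}}$ and $w=\tfrac12(\Phi_{\alpha\alpha}+\Phi_{\beta\beta})$. Using your diagonal equation, the $c_2$ terms cancel and
\[
\dot q=c_1\bigl[u(1+2q^2)-3wq\bigr].
\]
Hence $\dot q=3c_1(u-w)\le 0$ at $q=1$ by AM--GM, and $\dot q=3c_1(u+w)\ge0$ at $q=-1$. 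So $[-1,1]$ is invariant, and the off-diagonals are bounded by the bounded diagonals.

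For ReLU the diagonal is frozen and $\dot q=\rho(q)$, with $\rho(1)=0$ and $\rho(-1)>0$. You should also say why $b$ is Lipschitz there. The reason is that $\rho'(x)=-\frac{(c_+-c_-)^2}{2\pi}\arccos x$ is bounded, even though $\sqrt{1-x^2}$ alone is not Lipschitz at $\pm1$.

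Alternatively, localize with a test function that also bounds $\max_{\alpha\beta}|\Phi_{\alpha\beta}|$, and inherit PSD-ness of the ODE from the chain, as in \Cref{lem: psd of limit}.

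Once the ODE is known to stay in $K_{r/2}$ on $[0,T]$, your uniform convergence gives $\P(\tau_r^{(N)}\le T)\to 0$ directly for that fixed $r$. The $r\to\infty$ argument of \Cref{lem: finite non-explosion} is then unnecessary.
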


Because the ODE removes stochasticity in the diagonals, the convergence is thus global and not just local.
As justified in \Cref{prop: relu ode feature map} and \Cref{prop: smooth ode feature map}, in both smooth and ReLU cases, the Neural Covariance ODE yields a deterministic, $\D$-independent feature map. Combining the above arguments and \Cref{thm: neural cov ode} into, we immediately get the following.
\begin{theorem}[\texttt{Takeaway 1.2}: Kernel method in the Neural Covariance ODE]\label{thm: ode feature map}
Fix any $p\in (0, 1/2)$ and $P>0$ and consider  the limit where $L = N^{2p}\to\infty, s \in\Theta(L^{1/2})$.
Bayesian inference is equivalent to a kernel method with feature map $f_0(x; c=L/s^2)$ defined by the Neural ODE that only depends on $s$ and $\phi_{\operatorname{base}}$.
\end{theorem}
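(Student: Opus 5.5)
The proof combines the ODE convergence of \Cref{thm: neural cov ode} with the representation of the partition function in \Cref{lem: Bayes partition}, and identifies the limit with the kernel-method partition function \eqref{eqn: kernel method}. Work on the enlarged Gram matrix over $\D\cup\{x_0\}$, which is still a finite set of $P+1$ inputs, so \Cref{thm: neural cov ode} applies with $P$ replaced by $P+1$.

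\textbf{Step 1 (the ODE limit is a data-independent Gram matrix).} Let $c^\star=\lim L/s^2$. \Cref{thm: neural cov ode} gives $\Phi^{(L)}\Rightarrow \Phi_{c^\star}$, where $\Phi_{c^\star}$ is the time-$c^\star$ solution of $\d\Phi_c=b(\Phi_c)\d c$ started at $\Phi_0=\frac{1}{N_0}[X,x_0]^\top[X,x_0]$. Since the limit is a deterministic ODE solution, weak convergence in Skorokhod space implies convergence in probability of the terminal value; passing from paths to the terminal value is legitimate because the limit path is continuous. By \Cref{prop: relu ode feature map} and \Cref{prop: smooth ode feature map}, $(\Phi_{c^\star})_{\alpha\beta}=\langle f_0(x_\alpha;c^\star),f_0(x_\beta;c^\star)\rangle$ for a feature map depending only on $c^\star$ and $\phi_{\operatorname{base}}$.

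\textbf{Step 2 (pass the limit through the partition function).} By \eqref{eqn: integral bayes partition}, for each fixed $p$ the integrand is $\exp[-\frac{1}{2\beta}\|p\|^2+ip^\top Y]\,\E[\exp(-\frac12 v^\top\Phi^{(L)}v)]$. The map $\Phi\mapsto e^{-v^\top\Phi v/2}$ is bounded by $1$ on PSD matrices and is continuous, and $\Phi^{(L)}\succeq 0$ since it is a Gram matrix. Hence bounded convergence gives
\[
\E\big[e^{-v^\top\Phi^{(L)}v/2}\big]\to e^{-v^\top\Phi_{c^\star}v/2}.
\]
For finite $\beta$ the full integrand is dominated by $e^{-\|p\|^2/(2\beta)}$, which is integrable, so dominated convergence in $p$ gives $Z_\beta^{(N,L,\D)}\to Z_\beta^{\operatorname{kernel}}(x_0,\kappa;f_0(\cdot;c^\star))$. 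The ratio in \eqref{eqn: characteristic function def} then converges, so the predictive characteristic function is the Gaussian one from \eqref{eqn: kernel method}.

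\textbf{Step 3 (zero temperature).} For $\beta=\infty$ the weight $e^{-\|p\|^2/(2\beta)}$ is absent, so domination in Step 2 fails. I would instead use the Gaussian in $p$ supplied by $\Phi_{pp}$ itself. The plan is to require that $\Phi^{(L)}_{pp}\succeq \lambda\I$ with high probability, which follows from $\Phi_{c^\star}\succ 0$ (itself a consequence of $\Phi_0\succ0$ and the ODE flow) together with convergence in probability. Alternatively, one can take $\beta\to\infty$ after $N\to\infty$ in the explicit kernel formula.

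\textbf{Main obstacle.} The step I expect to be hardest is making Step 1 fully rigorous: Skorokhod convergence must actually deliver the terminal-time marginal, and the ODE must stay nonexplosive and PSD up to time $c^\star$. This is where the global (rather than only local) convergence noted after \Cref{thm: neural cov ode}, together with \Cref{lem: psd of limit}, will be used.
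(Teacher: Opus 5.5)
Your proposal is correct and follows the paper's route: the paper combines the ODE convergence of \Cref{thm: neural cov ode} with the data-independent feature maps of \Cref{prop: relu ode feature map} and \Cref{prop: smooth ode feature map}, and your Steps 1--2 spell out the passage through \Cref{lem: Bayes partition} by bounded and dominated convergence, mirroring the proof of \Cref{thm: mlp part to nsde part}, which is likewise stated only for finite $\beta$. Drop the first plan in Step 3: a high-probability bound $\Phi^{(L)}_{pp}\succeq\lambda\I$ cannot give an integrable dominating function on $\R^P$, since the bad event contributes a $p$-independent term and you would really need uniform integrability of $\det(\Phi^{(L)}_{pp})^{-1/2}$; your fallback of sending $\beta\to\infty$ after the width limit is what the paper's results actually support.
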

Finally, when $L/s^2 = c\in o(1)$, the neural ODE gives the identity feature map, and when $c \to\infty$, the kernel becomes the solution to the ODE at $c=\infty$. It is not hard to check that in both cases of \Cref{thm: nsde relu} and \Cref{thm: nsde smooth}, the drift-only ODE at infinite-time converges to a single fixed stationary solution (depending on $\Phi_0$). Therefore, $s \in\Theta(L^{1/2})$ is the unique non-degenerate scaling.

\section{Perturbative expansion with small depth-time }\label{sec: small depth-time expansion}
In this section, we justify the takeaways in \Cref{takeaway: perturb} for the NSDE partition function when perturbing around small $\tau = LP/N$. Let us first present a modified Dynkin's formula (the proof deferred to \Cref{sec: lemmas}). 
\begin{lemma}[Differentiation of expectation] \label{lem: differentiation of expectation}
Assume that the coefficients $b: \mathbb{R}^d \to \mathbb{R}^d$ and $\sigma: \mathbb{R}^d \to \mathbb{R}^{d \times m}$ are locally Lipschitz and continuous.
Let $x_0 \in \mathbb{R}^d$, and let $\left(X_t\right)_{0 \leq t<\tau}$ be the unique local strong solution to the stochastic differential equation:
$$
\mathrm{d} X_t=b\left(X_t\right) \mathrm{d} t+\sigma\left(X_t\right) \mathrm{d} W_t, \quad X_0=x_0
$$
where $\tau=\lim_R\tau_R$ is the explosion stopping time such that  $b(\Phi_t), \sigma(\Phi_t)$ are bounded and Lipschitz within $[0, \tau_R]$.
Then, for any function $f \in C_b^2\left(\mathbb{R}^d\right)$, we have:
$$
\lim _{t \to  0} \frac{\mathbb{E}\left[f\left(X_t\right) \one_{\left\{t<\tau\right\}}\right]-f\left(x_0\right)}{t}=b\left(x_0\right)^{\top} \nabla f\left(x_0\right)+\frac{1}{2} \operatorname{Tr}\left(\sigma\left(x_0\right) \sigma\left(x_0\right)^{\top} \nabla^2 f\left(x_0\right)\right)
$$
In particular, if $\P(\tau=\infty)=1$, then
$$\left.\frac{\d}{\d t}\right|_{t=0^{+}} \mathbb{E}\left[f\left(X_t\right)\right]=b(x_0)^{\top} \nabla f(x_0)+\frac{1}{2} \operatorname{Tr}\left(\sigma(x_0) \sigma(x_0)^{\top} \nabla^2 f(x_0)\right) .$$
\end{lemma}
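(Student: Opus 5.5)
The plan is to localize the SDE and then apply the classical Dynkin formula to the stopped process. Fix $R$ large enough that $x_0$ lies strictly inside the localizing region, i.e. $\tau_R>0$ almost surely. Let $\mathcal{L}f = b^\top\nabla f+\frac12\operatorname{Tr}(\sigma\sigma^\top\nabla^2 f)$ denote the generator. By It\^o's lemma applied to $f(X_{t\wedge\tau_R})$ with $f\in C_b^2$, and since $b,\sigma$ are bounded on $[0,\tau_R]$, the stochastic integral is a true martingale. Taking expectations therefore gives
$$\mathbb{E}[f(X_{t\wedge\tau_R})]-f(x_0)=\mathbb{E}\int_0^{t\wedge\tau_R}\mathcal{L}f(X_s)\,\d s .$$

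We then divide by $t$ and let $t\to0$. Because $\mathcal{L}f$ is bounded on the localized region and continuous, path continuity gives $\mathcal{L}f(X_s)\to\mathcal{L}f(x_0)$ as $s\to0$. We also have $\one_{\{s<\tau_R\}}\to1$ almost surely, since $\tau_R>0$. Dominated convergence then yields
$$t^{-1}\,\mathbb{E}\int_0^{t\wedge\tau_R}\mathcal{L}f(X_s)\,\d s\to\mathcal{L}f(x_0).$$

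It remains to compare $\mathbb{E}[f(X_{t\wedge\tau_R})]$ with $\mathbb{E}[f(X_t)\one_{\{t<\tau\}}]$. On $\{t<\tau_R\}$ the two integrands coincide, because $\tau_R\le\tau$. Their difference is therefore bounded by $2\|f\|_\infty\,\P(\tau_R\le t)$, and the key estimate is that $\P(\tau_R\le t)=o(t)$. To get this, choose $R$ so that the localizing set contains a ball $B(x_0,\delta)$. Then $\{\tau_R\le t\}\subseteq\{\sup_{s\le t\wedge\tau_R}|X_s-x_0|\ge\delta\}$. Write $X_{s\wedge\tau_R}-x_0$ as the drift part, which is bounded by $Ct$, plus a martingale with bounded quadratic variation rate. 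For $t$ small the drift part is below $\delta/2$. Applying Burkholder--Davis--Gundy (or Doob) with a fourth moment to the martingale part gives a bound of order $C t^2/\delta^4$, which is $o(t)$. The same bound handles $\P(\tau\le t)\le \P(\tau_R\le t)$.

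Combining the three estimates gives the first formula. When $\P(\tau=\infty)=1$ the indicator equals one almost surely, so the expression is just the right derivative of $\mathbb{E}f(X_t)$ at $0$. The main obstacle is the $o(t)$ exit-probability bound, since it is the only step where the non-global Lipschitz drift matters. The fourth-moment BDG argument above is what I would use first. If necessary, a Chebyshev bound using the exponential martingale would serve as a fallback.
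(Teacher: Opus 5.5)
Your proposal is correct and follows essentially the same route as the paper's proof. Both apply It\^o/Dynkin to the process stopped at $\tau_R$, pass to the limit with dominated convergence on the time-averaged generator, compare with the killed expectation via $2\|f\|_\infty\,\mathbb{P}(\tau_R\le t)$, and get the $o(t)$ exit bound from a fourth-moment Burkholder--Davis--Gundy estimate on the martingale part. Your use of a ball $B(x_0,\delta)$ inside the localizing set is a slightly cleaner way to phrase the exit estimate: it covers in one step what the paper treats separately for the upper and lower bounds on the diagonal entries.
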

\Cref{lem: differentiation of expectation} immediately allow us to show \texttt{Takeaway 2} by a simple Ito's Lemma for a finite $P$ via the following arguments. Recall that $s_t(v) = v^\top\Phi_t v$, the above lemma applied on $F(\Phi) = \expp{-v^\top \Phi v/2}$ suggests that
\begin{equation}\label{eqn: partial diff s_tau}
    {\partial_\tau} \mathbb{E}\left[\exp \left(-\frac{s_\tau}{2}\right)\right]=\exp \left(-\frac{s_0}{2}\right)\left(-\frac{1}{2} c_{\operatorname{act}}\cdot v^\top b(\Phi_0)v+\frac{1}{4P} s_0^2\right)
\end{equation}
applied on \eqref{eqn: s_tau}. Very similar to the linear case (\Cref{sec: special 1 linear}), one can proceed to integrate over $p$ via the equivalent Gaussian integral \eqref{eqn: int Gaussian def}. To justify  \texttt{Takeaway 2}, let us first explain why differentiation and integral can be swapped, or in other words, why
\begin{equation}\label{eqn: swap partial int}
    \partial_\tau \int_{\R^P} e^{iY^\top p-\frac{1}{2\beta}\|p\|^2}\mathbb{E}\left[\exp \left(-\frac{s_\tau}{2}\right)\right]\d p=\int_{\R^P} e^{iY^\top p-\frac{1}{2\beta}\|p\|^2}\partial_\tau \mathbb{E}\left[\exp \left(-\frac{s_\tau}{2}\right)\right]\d p.
\end{equation}
By dominated convergence, we only need an upper bound $G(p)$ such that:
$$\left|\partial_\tau \mathbb{E} \exp \left(-\frac{s_\tau }{2}\right)\right| \leq G(p) \quad \text { uniformly for small } \tau,\text{ and}\quad \int_{\mathbb{R}^P} e^{-\frac{1}{2 \beta}\|p\|^2} G(p) \d p<\infty.$$
Let $H_\tau(p)\triangleq \mathbb{E}\left[\exp \left(-\frac{1}{2}s_\tau(v)\right)\right]$.
Since $\Phi_\tau \succeq 0$, we have $s_\tau(v) \geq 0$. Applying Itô's lemma to
$F_v(\Phi)=\exp \left(-\frac{1}{2} v^{\top} \Phi v\right)$
gives
$$
\d F_v\left(\Phi_\tau\right)=F_v\left(\Phi_\tau\right)\left(-\frac{1}{2} c_{\mathrm{act}} v^{\top} b\left(\Phi_\tau\right) v+\frac{1}{4 P} s_\tau(v)^2\right) \d \tau-\sqrt{\frac{1}{2 P}} F_v\left(\Phi_\tau\right) s_\tau(v) \d W_\tau
$$
Letting the localization radius go to infinity using non-explosion (see \Cref{lem: differentiation of expectation}) we get
$$
\partial_\tau H_\tau(p)=\mathbb{E}\left[e^{-s_\tau / 2}\left(-\frac{1}{2} c_{\mathrm{act}} v^{\top} b\left(\Phi_\tau\right) v+\frac{1}{4 P} s_\tau^2\right)\right].
$$
We now bound this uniformly for $\tau \in\left[0, \tau_0\right]$, where $\tau_0>0$ is fixed and small. First,
$$
0 \leq e^{-s_\tau / 2} s_\tau^2 \leq \sup _{x \geq 0} x^2 e^{-x / 2}=16 e^{-2}
$$
Thus
$\frac{1}{4 P} \mathbb{E}\left[e^{-s_\tau / 2} s_\tau^2\right] \leq \frac{4}{P e^2}.$ It remains to control the drift term. For both the ReLU and smooth NSDE drifts, it is not hard to see that there is a constant $C_b$, depending only on $P$ and the activation constants, such that for every $\Phi \succeq 0$,
$$
\|b(\Phi)\|_{op} \leq C_b\left(1+(\operatorname{Tr}(\Phi))^2\right) .
$$
Since the diagonal processes have finite moments on bounded time intervals, each diagonal process is dominated in moments by a geometric Brownian motion. Let $M_{\tau_0}\triangleq\sup _{0 \leq \tau \leq \tau_0} \mathbb{E}\left[1+\left(\operatorname{Tr} \Phi_\tau\right)^2\right]$ then it is finite and that:
$$\mathbb{E}\left[e^{-s_\tau / 2}\left|v^{\top} b\left(\Phi_\tau\right) v\right|\right]  \leq\|v\|^2 \mathbb{E}\left\|b\left(\Phi_\tau\right)\right\|_{op}  \leq C_b M_{\tau_0}\|v\|^2 .$$
Since $v=(p, \kappa)$ and $\kappa$ is fixed, we have
$\|v\|^2 \leq C_\kappa\left(1+\|p\|^2\right).$
So
$$\left|\partial_\tau H_\tau(p)\right| \leq C_{\tau_0, \kappa}\left(1+\|p\|^2\right)\triangleq G(p) \text{ is a polynomial}$$
where $C_{\tau_0, \kappa}$ is independent with $p, \tau$. This justifies dominated convergence in \eqref{eqn: swap partial int} when $\beta<\infty$.
Combining \eqref{eqn: partial diff s_tau} and \eqref{eqn: swap partial int}, we get the below which justifies \texttt{Takeaway 2}.
\begin{theorem}[First-order expansion of log partition function]
The first-order small $\tau$ partition function of the Neural Covariance SDE \eqref{eqn: nsde tau} with shaping \eqref{eqn: shape def} can be written as:
\begin{equation}\label{eqn: decompose d log Z}
    \l.\partial_\tau\log Z^{(\tau)}_{\beta}\r|_{\tau=0}= c_{\operatorname{act}}\cdot \E_{p\sim\eqref{eqn: int Gaussian def}}\l[ 
-\frac{1}{2} v^\top b(\Phi_0)v\r]+\E_{p\sim\eqref{eqn: int Gaussian def}}\l[ \frac{1}{4P} s_0^2\r]
\end{equation}
at any $0<\beta<\infty$.
\end{theorem}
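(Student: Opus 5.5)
The argument has three steps: compute the $\tau$-derivative of the integrand pointwise in $p$, move the derivative under the $p$-integral, and recognize the result after dividing by $Z^{(0)}_\beta$ as an expectation under the complex Gaussian \eqref{eqn: int Gaussian def}.

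\textbf{Pointwise derivative.} Fix $p$, set $v=[p^\top,\kappa]^\top$, and take $F_v(\Phi)=e^{-v^\top\Phi v/2}$. This function is smooth and bounded on the PSD cone; \Cref{lem: psd of limit} guarantees that $\Phi_\tau$ stays in that cone. Its gradient and Hessian are polynomially bounded multiples of $F_v$. I would apply \Cref{lem: differentiation of expectation} to the time-changed NSDE \eqref{eqn: nsde tau}. Its hypotheses hold for the following reasons.
\begin{itemize}
\item The drift $c_{\operatorname{act}}b$ is locally Lipschitz: it is polynomial in the smooth case, and $\rho$ is Lipschitz away from $D_\tau=0$ in the ReLU case.
\item The diffusion $\Phi\mapsto P^{-1/2}\Phi^{1/2}(\cdot)\Phi^{1/2}$ is locally Lipschitz near $\Phi_0\succ 0$, which is full rank by assumption.
\item Non-explosion, $\P(\tau=\infty)=1$, follows from \Cref{thm: nsde relu} and \Cref{thm: nsde smooth}.
\end{itemize}
To meet the $C^2_b$ requirement, I would multiply $F_v$ by a cutoff that equals $1$ on a neighbourhood of the PSD cone intersected with a large ball, and let the localization radius grow. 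The generator computation is Itô's formula on $s_\tau(v)=v^\top\Phi_\tau v$, whose quadratic variation is $\tfrac{2}{P}s_\tau^2\,\d\tau$ by \eqref{eqn: s_tau}. It gives
\[
\partial_\tau\mathbb{E}[e^{-s_\tau/2}]\big|_{\tau=0}=e^{-s_0/2}\Big(-\tfrac12 c_{\operatorname{act}}\,v^\top b(\Phi_0)v+\tfrac{1}{4P}s_0^2\Big),
\]
which is \eqref{eqn: partial diff s_tau}.

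\textbf{Swapping $\partial_\tau$ and $\int\d p$.} This is the main obstacle. I would use the dominating function already constructed before the statement. For $\tau\in[0,\tau_0]$, Dynkin's formula gives $\partial_\tau H_\tau(p)$ as an expectation of the generator. The term $e^{-s/2}s^2$ is bounded by $16e^{-2}$. The drift term is bounded by $C_bM_{\tau_0}\|v\|^2$, using the growth bound $\|b(\Phi)\|_{op}\le C_b(1+(\operatorname{Tr}\Phi)^2)$ and the fact that $\mathbb{E}[1+(\operatorname{Tr}\Phi_\tau)^2]$ is finite. That finiteness comes from comparing each diagonal entry, which is a driftless geometric Brownian motion in the ReLU case, with a GBM in moments.

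The delicate part is the smooth case. There the diagonal drift is not zero, and I would rely on assumption \texttt{A3} to obtain a moment bound, arguing that the drift on the diagonal is nonpositive for large $\Phi_{\alpha\alpha}$. This yields $G(p)=C(1+\|p\|^2)$, which is integrable against $e^{-\|p\|^2/(2\beta)}$ because $\beta<\infty$; this is where the restriction $\beta<\infty$ enters. Dominated convergence together with the mean value theorem then justifies \eqref{eqn: swap partial int}.

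\textbf{Normalization.} After the swap, the derivative of $Z$ is the integral of $e^{iY^\top p-\|p\|^2/(2\beta)-s_0/2}$ times the polynomial $-\tfrac12c_{\operatorname{act}}v^\top b(\Phi_0)v+\tfrac1{4P}s_0^2$ in $p$. The exponent is quadratic in $p$ with real part negative definite, namely $-\tfrac12 p^\top(\Phi_{pp}+\beta^{-1}\I)p$. Completing the square gives the complex Gaussian \eqref{eqn: int Gaussian def}, whose normalizing integral is exactly $Z^{(0)}_\beta$ by \eqref{eqn: integral bayes partition} at $\tau=0$. Dividing by $Z^{(0)}_\beta$, which is nonzero because it is an explicit Gaussian integral, gives $\partial_\tau\log Z$ at $\tau=0$. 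By linearity of the Gaussian expectation, this equals the sum of the two expectations in \eqref{eqn: decompose d log Z}.
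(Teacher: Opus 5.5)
Your proposal is correct and follows the paper's argument step for step. You apply the differentiation lemma to $F_v$ to get the pointwise derivative, justify the swap by dominated convergence with $G(p)=C(1+\|p\|^2)$ (from $\sup_{x\ge 0}x^2e^{-x/2}=16e^{-2}$, the growth bound on $b$, and moment domination of the diagonals, integrable only when $\beta<\infty$), and then divide by $Z^{(0)}_\beta$ to read off the complex Gaussian \eqref{eqn: int Gaussian def}. Your use of \texttt{A3} to show that the smooth-case diagonal drift $(\tfrac34\phi''(0)^2+\phi'''(0))\,\Phi_{\alpha\alpha}(\Phi_{\alpha\alpha}-1)$ is nonpositive for $\Phi_{\alpha\alpha}\ge 1$ fills in the GBM moment-domination step that the paper only asserts.
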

The second term in \eqref{eqn: decompose d log Z} is the linear term from \Cref{sec: special 1 linear} based on the fact that the Gaussian distribution integrated over $p$ does not change at $\tau=0$. To see what the first term represents more clearly, consider again the Neural ODE kernel from \Cref{sec: special 2 neural ode}. From the ODE $\d\Phi_c = b(\Phi_c) \d c$ we have that:
$$\l.\d \expp{-\frac{1}{2}v^\top \Phi_c v}\r|_{c=0} =-\frac{1}{2}\expp{-\frac{1}{2}s_0} v^\top b(\Phi_0)v$$
Now we can swap in differentiation placement for the exact reason as above to get:
$$\l.\frac{1}{Z_\beta^{\operatorname{kernel}}(\cdot; f_0(\cdot; 0)=\operatorname{id})}\partial_c Z_\beta^{\operatorname{kernel}}(\cdot; f_0(\cdot; c))\r|_{c=0} = \E_{\eqref{eqn: int Gaussian def}}\lrb{-\frac{1}{2}v^\top b(\Phi_0) v}$$
which is exactly the first term in \eqref{eqn: decompose d log Z}, in turn justifying \Cref{eqn: perturbative meta}.

\subsection{First-order evidence and interpretation}\label{sec: interpretations}
From \Cref{eqn: perturbative meta}, the perturbative evidence \eqref{eqn: perturbative evidence} in \texttt{Takeaway 2.1} follows easily by taking $\kappa=0$ (so that $Z(x_0, 0)=Z(0)$). The non-linear component is copied verbatim, and the linear evidence follows from \eqref{eqn: linear evidence temp} directly.
Having now the exact expression for
$$\partial_\tau Z(0) = \partial_\tau \P_{\operatorname{prior}}(\D)$$
we can interpret the expression as architecture preference for the data. Specifically, we analyze the question of 
$$\text {when does a larger  } \frac{LP}{N} \text {  increase Bayesian evidence (deeper is better) }?$$
We do so by evaluating (recall \eqref{eqn: linear evidence temp}) $$\partial_\tau \log Z(0)=\frac{1}{4P}\left[\operatorname{Tr}(\Phi(\Phi^\beta)^{-1})-Y^\top (\Phi^\beta)^{-1}\Phi(\Phi^\beta)^{-1}Y\right]^2+c_{\operatorname{act}}{\partial_c \log Z^{\operatorname{kernel}}_{\beta}(0; f_0(\cdot, c))} $$ at $\tau=0$. For the data-generating process, we assume that there exists a norm $\rho>0$ such that
$$\|\Phi_0 - \rho\I_P\|_{op}\in o(1)$$
that input data are close to orthogonal to each other and all have norm close to $\rho$. Furthermore, we assume that $|\one^\top Y| =|\sum_\D Y_\alpha|\in\Theta(\sqrt{P})$. In this case, the linear evidence evaluates to the simplified expression
$$\frac{1}{4P}\left[\operatorname{Tr}(\Phi(\Phi^\beta)^{-1})-Y^\top (\Phi^\beta)^{-1}\Phi(\Phi^\beta)^{-1}Y\right]^2=\frac{P}{4}\l(\frac{\rho\|Y\|^2}{P(\rho+\beta^{-1})^2}-\frac{\rho}{\rho+\beta^{-1}}\r)^2 $$
Now for the non-linear part we need
$$\left.{\partial_c \log Z^{\operatorname{kernel}}_{\beta}(0; f_0(\cdot, c))} \right|_{c=0} =  \E_{p\sim \eqref{eqn: int Gaussian def}}\lrb{-\frac{1}{2} p^\top b(\Phi_0) p}.$$
Below we evaluate the drift-induced evidence for each different $b$.
\paragraph{Smooth activation function} In the smooth case we have:
$$b_{\alpha \beta}(\Phi)=c_1\left[\Phi_{\alpha \alpha} \Phi_{\beta \beta}+\Phi_{\alpha \beta}(2 \Phi_{\alpha \beta}-3)\right]+c_2 \Phi_{\alpha \beta}\left(\Phi_{\alpha \alpha}+\Phi_{\beta \beta}-2\right),\quad \forall \alpha, \beta\in [P]$$
where $c_1 =\frac{1}{4} (\phi''(0))^2, c_2 = \frac{1}{2} \phi'''(0)$. The Gaussian expectation evaluates to:
$$
\begin{aligned}
    \E_{\eqref{eqn: int Gaussian def}}\l[-\frac{1}{2} p^\top b(\Phi) p\r]=&\frac{\rho\left(2\left(c_1+c_2\right) \rho-\left(3 c_1+2 c_2\right)\right)}{2 (\rho+\beta^{-1} )^2}\|Y\|^2\\&+\frac{c_1 \rho^2}{2 (\rho+\beta^{-1} )^2}\left(\one^{\top} Y\right)^2-P\frac{\left(3 c_1+2 c_2\right) \rho(\rho-1)}{2 (\rho+\beta^{-1})}
\end{aligned}$$
As a result, we have the following statement.
\begin{proposition}[Architectural takeaway for smooth activation]
    Under the $\|\Phi_0 -\rho \I\|_{{op}}=o(1)$ training data $X_\D$ assumption, deeper is better, equivalently $\partial_\tau\P(\D)\geq 0$, if and only if:
    \begin{align*}
        0\leq &\frac{1}{4}\l(\frac{\rho\|Y\|^2}{P(\rho+\beta^{-1})^2}-\frac{\rho}{\rho+\beta^{-1}}\r)^2 +c_{\operatorname{act}}\cdot \l\{\frac{\rho\left(2\left(c_1+c_2\right) \rho-\left(3 c_1+2 c_2\right)\right)}{2 P(\rho+\beta^{-1} )^2}\|Y\|^2\r.\\&\quad +\l.\frac{c_1 \rho^2}{2 P(\rho+\beta^{-1} )^2}\left(\one^{\top} Y\right)^2- \frac{\left(3 c_1+2 c_2\right) \rho(\rho-1)}{2 (\rho+\beta^{-1})}\r\}
    \end{align*}
where $c_1 =\frac{1}{4} (\phi''(0))^2, c_2 = \frac{1}{2} \phi'''(0)$.
\end{proposition}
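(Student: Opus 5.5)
The plan is to combine the first-order decomposition \eqref{eqn: decompose d log Z} at $\kappa=0$ with the two closed-form evaluations already set up in this subsection, and then read off the sign. At $\kappa=0$ we have $v=(p,0)$, and $Z_\beta(0)$ equals $\P_{\operatorname{prior}}(\D)$ up to a positive $\tau$-independent constant. Hence $\partial_\tau \P(\D)\geq 0$ at $\tau=0$ is equivalent to $\partial_\tau\log Z^{(\tau)}_\beta(0)|_{\tau=0}\geq 0$. By \eqref{eqn: decompose d log Z}, this derivative is the linear evidence term \eqref{eqn: linear evidence temp} plus $c_{\operatorname{act}}\,\E_{p\sim\eqref{eqn: int Gaussian def}}[-\tfrac12 p^\top b(\Phi_0)p]$. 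Here $p$ is the complex Gaussian with mean $i(\Phi^\beta_{pp})^{-1}Y$ and covariance $(\Phi^\beta_{pp})^{-1}$. The condition in the statement is exactly this sum divided by $P$ (a positive factor), so the equivalence is immediate once both terms are computed under $\|\Phi_0-\rho\I\|_{op}=o(1)$.

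For the linear term, I will replace $\Phi_{pp}$ by $\rho\I$ in \eqref{eqn: linear evidence temp}. This gives $\operatorname{Tr}(\Phi(\Phi^\beta)^{-1})=P\rho/(\rho+\beta^{-1})$ and $Y^\top(\Phi^\beta)^{-1}\Phi(\Phi^\beta)^{-1}Y=\rho\|Y\|^2/(\rho+\beta^{-1})^2$, which yields the displayed squared expression. I will control the error of this replacement by resolvent perturbation: each bracketed quantity is $\Theta(P)$ and changes by $o(P)$ when $\Phi_0$ is replaced by $\rho\I$. Its contribution after dividing by $P$ is therefore $o(1)$ relative to the leading order. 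The $O(1)$ remainder in \eqref{eqn: linear evidence temp} is also negligible after division by $P$.

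For the drift term, I will write $\E[p_\alpha p_\beta]=\Sigma_{\alpha\beta}-m_\alpha m_\beta$, using $m=(\Phi^\beta_{pp})^{-1}Y$ and $\Sigma=(\Phi^\beta_{pp})^{-1}$; the minus sign comes from the factor $i$ in the mean. This gives $\E[p^\top b p]=\operatorname{Tr}(b\Sigma)-m^\top b m$. I will then plug in the smooth drift at $\Phi_0\approx\rho\I$. On the diagonal, $b_{\alpha\alpha}=c_1(3\rho^2-3\rho)+c_2\rho(2\rho-2)=(3c_1+2c_2)\rho(\rho-1)$. Off the diagonal, only $c_1\Phi_{\alpha\alpha}\Phi_{\beta\beta}=c_1\rho^2$ survives at leading order. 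So $b\approx c_1\rho^2\one\one^\top+\big[(3c_1+2c_2)\rho(\rho-1)-c_1\rho^2\big]\I$. Then $\operatorname{Tr}(b\Sigma)\approx P(3c_1+2c_2)\rho(\rho-1)/(\rho+\beta^{-1})$, where the rank-one piece contributes only $O(1)$. With $m\approx Y/(\rho+\beta^{-1})$, the term $m^\top b m$ produces the $\|Y\|^2$ coefficient $(2c_1+2c_2)\rho^2-(3c_1+2c_2)\rho$ and the $(\one^\top Y)^2$ coefficient $c_1\rho^2$. Multiplying by $-\tfrac12$ recovers the displayed formula.

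I expect the main obstacle to be the error control in the off-diagonal part. Replacing $\Phi_{\alpha\beta}$ by $0$ creates a matrix perturbation whose entries are individually small. However, $b$ is evaluated entrywise, so the operator-norm bound $\|\Phi_0-\rho\I\|_{op}=o(1)$ does not immediately bound $\|b(\Phi_0)-b(\rho\I)\|_{op}$. The quadratic off-diagonal piece $2c_1\Phi_{\alpha\beta}^2$ is a Hadamard square, and it could have operator norm as large as $\|\Phi_0-\rho\I\|_F^2$. My first approach will be to bound the linear parts through Schur-product inequalities, using $\|A\circ B\|_{op}\le\max_\alpha|A_{\alpha\alpha}|\,\|B\|_{op}$ for PSD $A$. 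For the Hadamard square I will assume, or verify under the Marchenko--Pastur-type setting, that its contribution against $m m^\top$ is $o(P)$, and I will state this as the precise meaning of $o(1)$ in the assumption. The $(\one^\top Y)^2$ term is $\Theta(P)$ by assumption, so it is retained.
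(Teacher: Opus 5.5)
Your leading-order computation follows the paper's own argument. You set $\kappa=0$ in \eqref{eqn: decompose d log Z}, substitute $\Phi_0=\rho\I$ into \eqref{eqn: linear evidence temp} and into $\E[-\tfrac12 p^\top b(\Phi_0)p]=-\tfrac12\operatorname{Tr}(b\Sigma)+\tfrac12 m^\top b m$, and divide by $P$. Your formula for $b(\rho\I)$ and the resulting coefficients are correct. There is one slip: the rank-one piece $c_1\rho^2\one\one^\top$ is not $O(1)$ in $\operatorname{Tr}(b\Sigma)$. It contributes $c_1\rho^2P/(\rho+\beta^{-1})$, and this is exactly what cancels the $-c_1\rho^2$ in your identity piece. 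The paper does no error control, so what is new in your proposal is the last paragraph, and there the difficulty is misplaced. The Hadamard square is harmless. Indeed $\Phi_0\circ\Phi_0-\rho^2\I=(\Phi_0-\rho\I)\circ(\Phi_0+\rho\I)$, and Schur's inequality gives $\|A\circ B\|_{op}\le\|A\|_{op}\|B\|_{op}$. So this difference has operator norm $o(1)$, as do the differences coming from $-3c_1\Phi$ and $c_2(D\Phi+\Phi D-2\Phi)$ (with $D$ the diagonal part of $\Phi$). Each costs only $o(P)$ against $\Sigma$ and $mm^\top$, so no extra assumption is needed there.

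The genuine gap is the term $c_1dd^\top$, where $d$ is the vector of diagonal entries of $\Phi_0$. This term has operator norm $\Theta(P)$, and it is what produces the $(\one^\top Y)^2$ coefficient. Its contribution to $\tfrac12 m^\top bm$ is $\tfrac{c_1}{2}(d^\top m)^2$. The target $\rho(\rho+\beta^{-1})^{-1}\one^\top Y$ is only $\Theta(\sqrt P)$, so an $o(P)$ error requires $d^\top m=\rho(\rho+\beta^{-1})^{-1}\one^\top Y+o(\sqrt P)$. But $\|\Phi_0-\rho\I\|_{op}=o(1)$ only gives $|(d-\rho\one)^\top m|\le\|d-\rho\one\|\,\|m\|=o(P)$ and $|\one^\top((\Phi^\beta_{pp})^{-1}-(\rho+\beta^{-1})^{-1}\I)Y|=o(P)$. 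So your substitution $m\approx Y/(\rho+\beta^{-1})$ is uncontrolled in precisely this term.

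This is not an artifact of the bounds. Take $\Phi_0=\rho\I+\delta(\one Y^\top+Y\one^\top)/P$ with $\delta=P^{-1/4}$. Then $\|\Phi_0-\rho\I\|_{op}=O(\delta)$ and $d\approx\rho\one$. Yet $\one^\top m\approx(\rho+\beta^{-1})^{-1}\one^\top Y-(\rho+\beta^{-1})^{-2}\delta\|Y\|^2$, which is of order $P^{3/4}$. Hence for $c_1c_{\operatorname{act}}>0$ the drift term divided by $P$ is of order $\sqrt P$. For large $P$ the true $\partial_\tau\log Z_\beta(0)$ is then positive whatever the sign of the displayed expression.

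The hypothesis must therefore be strengthened along these two directions. One option is $\|\Phi_0-\rho\I\|_{op}=o(P^{-1/2})$. Another is to assume directly that $|(d-\rho\one)^\top m|=o(\sqrt P)$ and $|\one^\top((\Phi^\beta_{pp})^{-1}-(\rho+\beta^{-1})^{-1}\I)Y|=o(\sqrt P)$. One expects this for isotropic random inputs independent of $Y$ with $P=o(N_0)$. Under either condition, your resolvent-perturbation and Schur bounds do make all remaining errors $o(P)$. When $c_1=0$ (e.g.\ odd $\phi$) the issue disappears and your argument works as stated.
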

\paragraph{ReLU activation function} Similar to the smooth case, in ReLU networks recall that
$$b\left(\Phi_t\right)=\left(c_{+}-c_{-}\right)^2D_t^{1 / 2} \rho\left(D_t^{-1 / 2} \Phi_t D_t^{-1 / 2}\right) D_t^{1 / 2}$$
for $\rho(x)\triangleq\frac{1}{2 \pi}\left(\sqrt{1-x^2}-x\arccos x\right)$.
The Gaussian expectation in this case evaluates to:
$$\E_{\eqref{eqn: int Gaussian def}}\l[-\frac{1}{2} p^\top b(\Phi_0) p\r]= \frac{\left(c_{+}-c_{-}\right)^2\rho}{4 \pi(\rho+\beta^{-1})^2}\left[\left(\one ^{\top} Y\right)^2-\|Y\|^2\right].$$
As a result, we have the following statement.
\begin{proposition}[Architectural takeaway for ReLU activation]
    Under the $\|\Phi_0 -\rho \I\|_{{op}}=o(1)$ training data $X_\D$ assumption, deeper is better, equivalently $\partial_\tau\P(\D)\geq 0$, if and only if:
    \begin{align*}
        0\leq &\frac{1}{4}\l(\frac{\rho\|Y\|^2}{P(\rho+\beta^{-1})^2}-\frac{\rho}{\rho+\beta^{-1}}\r)^2  +c_{\operatorname{act}}\cdot \frac{\left(c_{+}-c_{-}\right)^2\rho}{4P \pi(\rho+\beta^{-1})^2}\left[\left(\one ^{\top} Y\right)^2-\|Y\|^2\right]
    \end{align*}
\end{proposition}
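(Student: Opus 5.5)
Since \eqref{eqn: decompose d log Z} already splits $\partial_\tau\log Z_\beta^{(\tau)}(0)$ into a linear part and a drift part, I will evaluate each at $\kappa=0$ under $\|\Phi_0-\rho\I_P\|_{op}=o(1)$ and then check the sign. The linear part is \eqref{eqn: linear evidence temp}, which I will simplify with $\Phi_0\approx\rho\I$ and $\Phi^\beta\approx(\rho+\beta^{-1})\I$. This gives $\operatorname{Tr}(\Phi(\Phi^\beta)^{-1})\approx P\rho/(\rho+\beta^{-1})$ and $Y^\top(\Phi^\beta)^{-1}\Phi(\Phi^\beta)^{-1}Y\approx\rho\|Y\|^2/(\rho+\beta^{-1})^2$. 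The linear part is therefore $\frac{P}{4}\bigl(\frac{\rho\|Y\|^2}{P(\rho+\beta^{-1})^2}-\frac{\rho}{\rho+\beta^{-1}}\bigr)^2$ plus $O(1)$ and $o(P)$ errors. The $o(P)$ error comes from the $o(1)$ operator-norm perturbation multiplied by the $O(P)$ size of the trace and of $\|Y\|^2$.

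For the drift part I need $\E_{p}[-\frac12 p^\top b(\Phi_0)p]$ with $p\sim\N(i(\Phi^\beta_{pp})^{-1}Y,(\Phi^\beta_{pp})^{-1})$ from \eqref{eqn: int Gaussian def} at $\kappa=0$. For a complex Gaussian with mean $m$ and covariance $S$, $\E[p^\top Bp]=\operatorname{Tr}(BS)+m^\top Bm$. Here $m=\frac{i}{\rho+\beta^{-1}}Y$ and $S=\frac{1}{\rho+\beta^{-1}}\I$, up to $o(1)$ corrections, so
\[
\E[p^\top b\,p]\approx\frac{\operatorname{Tr} b(\Phi_0)}{\rho+\beta^{-1}}-\frac{Y^\top b(\Phi_0)Y}{(\rho+\beta^{-1})^2}.
\]
I will evaluate $b$ at $\Phi_0=\rho\I$ plus $o(1)$ off-diagonal entries.

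The ReLU computation goes as follows. The diagonal entries are $\rho\cdot\rho(1)=0$. Off the diagonal the correlation $x\approx0$, so $b_{\alpha\beta}\approx\rho\frac{(c_+-c_-)^2}{2\pi}$, using $\rho(0)=1/2\pi$. Hence $b\approx\frac{(c_+-c_-)^2\rho}{2\pi}(\one\one^\top-\I)$ and $\operatorname{Tr} b=0$. Then $Y^\top bY=\frac{(c_+-c_-)^2\rho}{2\pi}[(\one^\top Y)^2-\|Y\|^2]$, and multiplying by $-\frac12\cdot(-1)/(\rho+\beta^{-1})^2$ gives the displayed formula.

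The smooth computation goes as follows. The diagonal entries are $c_1(\rho^2+2\rho^2-3\rho)+c_2\rho(2\rho-2)=\rho(\rho-1)(3c_1+2c_2)$. The off-diagonal entries are $\approx c_1\rho^2$, since the $\Phi_{\alpha\beta}$ terms are $o(1)$. This yields the trace term $-P\frac{(3c_1+2c_2)\rho(\rho-1)}{2(\rho+\beta^{-1})}$. The $Y$-quadratic form is $\rho(\rho-1)(3c_1+2c_2)\|Y\|^2+c_1\rho^2[(\one^\top Y)^2-\|Y\|^2]$. The $\|Y\|^2$ coefficient is then $(3c_1+2c_2)\rho^2-(3c_1+2c_2)\rho-c_1\rho^2$, which is not the paper's $2(c_1+c_2)\rho^2-(3c_1+2c_2)\rho$. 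I will redo this bookkeeping against the paper's coefficients before trusting either; the sign criterion is unaffected by which version is correct.

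Finally, $Z_\beta(0)\propto\P_{\operatorname{prior}}(\D)$ and $Z>0$, so $\partial_\tau\P(\D)\ge0$ iff $\partial_\tau\log Z_\beta(0)\ge0$. Dividing the sum of the two parts by $P>0$ gives exactly the stated inequality.

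The main obstacle is justifying that the $o(1)$ errors in $\Phi_0$ do not contribute at order $P$. The sum $\sum_{\alpha\ne\beta}$ of $o(1)$ off-diagonal perturbations contracted against $Y_\alpha Y_\beta$ could be as large as $o(1)\cdot P^2$ entrywise. I would control it through the operator norm: $|Y^\top E Y|\le\|E\|_{op}\|Y\|^2=o(P)$, after writing $b(\Phi_0)-b(\rho\I)-\text{(rank-one part)}$ as a matrix with small operator norm. For ReLU this uses smoothness of $\rho$ near $0$ applied entrywise. It may in fact require entrywise $o(1)$ control strong enough to bound an entrywise function in operator norm, for instance Hadamard-product bounds, and I would state this as the working assumption. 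The retained $(\one^\top Y)^2=\Theta(P)$ term is what survives at leading order.
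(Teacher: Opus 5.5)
Your computation is the paper's own argument. The paper likewise splits $\partial_\tau\log Z_\beta(0)$ via \eqref{eqn: decompose d log Z} and simplifies \eqref{eqn: linear evidence temp} at $\Phi_0\approx\rho\I$. It then evaluates $\E[-\tfrac12 p^\top b(\Phi_0)p]=-\tfrac12\operatorname{Tr}(bS)+\tfrac12 Y^\top SbSY$, with $S=(\Phi_0+\beta^{-1}\I)^{-1}$, by plugging in $\Phi_0=\rho\I$, and does no error analysis at all. Your worry about the smooth case is an arithmetic slip: $(3c_1+2c_2)\rho^2-c_1\rho^2=2(c_1+c_2)\rho^2$, so your $\|Y\|^2$ coefficient is exactly the paper's.

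The proposal goes wrong in the error-control plan, and the obstruction is not the entrywise one you name.

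\textbf{The entrywise nonlinearity is harmless.} Write $C=D_0^{-1/2}\Phi_0D_0^{-1/2}=\I+E$ and $g(x)=\frac{(c_+-c_-)^2}{2\pi}(\sqrt{1-x^2}-x\arccos x)=\sum_k a_kx^k$, with $a=a_0=g(0)$. Then $g$ applied entrywise gives $g(C)=a(\one\one^\top-\I)+\sum_{k\ge1}a_kE^{\circ k}$. Schur's bound $\|A\circ B\|_{op}\le\|A\|_{op}\|B\|_{op}$ and the unit radius of convergence of the Taylor series make this sum $O(\|E\|_{op})$ in operator norm.

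\textbf{The fragile piece is the rank-one part.} This is $a\,dd^\top$ with $d=D_0^{1/2}\one$; it has operator norm $\Theta(P)$ and contributes $\tfrac a2\bigl(d^\top SY\bigr)^2$. Its target value $\tfrac{a\rho}{2(\rho+\beta^{-1})^2}(\one^\top Y)^2$ is only $\Theta(P)$ because $\one^\top Y=\Theta(\sqrt P)$. An $o(1)$ operator-norm perturbation of $S$ or of $D_0^{1/2}$ can move $d^\top SY$ by $o(1)\cdot\sqrt P\,\|Y\|=o(P)$, and hence the square by $o(P^2)$.

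\textbf{Counterexample.} Take $\Phi_0=\rho\I+\epsilon(uw^\top+wu^\top)$ with $u=\one/\sqrt P$, $w=Y/\|Y\|$, $\epsilon=P^{-1/4}$. This meets every stated assumption and moves each entry by only $O(\epsilon/P)$. Yet $\one^\top SY=\frac{\one^\top Y}{\rho+\beta^{-1}}-\Theta(\epsilon P)$. This adds a positive $\Theta(P^{1/2})$ to the normalized left-hand side, so the criterion reverses whenever the stated right-hand side is negative.

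So no entrywise or Hadamard-type working assumption rescues the step. You need a hypothesis on this alignment, for instance $\|\Phi_0-\rho\I\|_{op}=o(P^{-1/2})$, or directly $d^\top SY=\frac{\sqrt\rho}{\rho+\beta^{-1}}\one^\top Y+o(\sqrt P)$. Under such a hypothesis, all remaining errors are $o(P)$ by exactly the operator-norm bounds you describe: the trace term, the $-aD_0$ and $\sum_{k\ge1}$ pieces, and the linear part. The paper's derivation silently makes the same idealization, so this is a gap in the proposition's hypothesis as much as in your argument.
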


\subsection{First-order predictive posterior}\label{sec: first-order predictive posterior}
Let us apply \eqref{eqn: perturbative meta} to the predictive posterior at test $x_0$ and justify \texttt{Takeaway 2.2}. The predictive posterior can be read off from the partition function directly as the characteristic function
$$\log\E_{\operatorname{post}}[\exp\{-i\kappa f(x_0; \Theta)\}]= \log Z_\beta(x_0, \kappa)-\log Z_\beta(x_0, 0).$$
Because the constant ($\kappa=0$) terms are subtracted from the posterior, we will only work with $\kappa$-dependence and use a constant $C$ to placehold any quantity irrelevant to $\kappa$.
Taking differentiation on both sides, we have thus:
$$\partial_\tau\log\E_{\operatorname{post}, \tau}[\exp\{-i\kappa f(x_0; \Theta)\}]=\partial_\tau\l(\log Z^{(\tau)}_\beta(x_0, \kappa)-\log Z^{(\tau)}_\beta(x_0, 0)\r).$$
At $\tau=0$ the init condition of the forward pass, $\log\E_{\operatorname{post}}$ is a quadratic polynomial of $\kappa$ (as one can tell from \Cref{eqn: kernel method}). Furthermore, the posterior is a Gaussian random variable if and only if $\log\E_{\operatorname{post}}$ is a quadratic polynomial of $\kappa$. 

Let us examine the $\kappa$-dependence of $\log Z_\beta(x_0, \kappa)$ in light of \eqref{eqn: decompose d log Z}. Note that $$\E_{p\sim\eqref{eqn: int Gaussian def}}\l[ 
-\frac{1}{2} v^\top b(\Phi_0)v\r],\quad  v = [p^\top , \kappa]^\top $$ is \textit{by definition} a quadratic of $\kappa$. From \Cref{sec: special 1 linear}, we also know that $\E_{p\sim\eqref{eqn: int Gaussian def}}\l[ \frac{1}{4P} s_0^2\r]$ is (approximately) a quadratic polynomial of $\kappa$ for  any $\beta$. This gives us the following result.
\begin{proposition}[First order predictive posterior Gaussianity]\label{prop: first order predictive posterior gaussianity}
    For fixed $\kappa$, the first order perturbation $\partial_\tau\log\E_{\operatorname{post}}[e^{-i\kappa f(x_0)}]$ is a quadratic polynomial of $\kappa$ up to $O(P^{-1/2})$ terms at any $\beta>0$. As a result, at the first order of small $LP/N$ the predictive posterior remains Gaussian in the $P\to\infty$ limit.
\end{proposition}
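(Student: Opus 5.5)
Our plan is to read the $\kappa$-dependence directly off the decomposition \eqref{eqn: decompose d log Z}. Since
$$\partial_\tau\log\E_{\operatorname{post}}[e^{-i\kappa f(x_0)}]=\partial_\tau\log Z^{(\tau)}_\beta(x_0,\kappa)-\partial_\tau\log Z^{(\tau)}_\beta(x_0,0),$$
it suffices to show that each of the two terms in \eqref{eqn: decompose d log Z}, evaluated at $\tau=0$, is a polynomial in $\kappa$ of degree at most two, up to an $O(P^{-1/2})$ remainder. The swap of derivative and $p$-integral is already justified (the dominating $G(p)$ bound preceding \eqref{eqn: decompose d log Z}), so no further analysis is needed for that step.

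\emph{Drift term.} With $v=[p^\top,\kappa]^\top$, we expand
$$v^\top b(\Phi_0)v=p^\top b_{pp}p+2\kappa\, b_{\kappa p}p+\kappa^2 b_{\kappa\kappa}.$$
The matrix $b(\Phi_0)$ is deterministic and depends only on $\Phi_0$, not on $\kappa$. The Gaussian law \eqref{eqn: int Gaussian def} has mean $(\Phi^\beta_{pp})^{-1}(iY-\Phi_{p\kappa}\kappa)$, which is affine in $\kappa$, and covariance $(\Phi^\beta_{pp})^{-1}$, which is independent of $\kappa$. Hence $\E[p]$ is affine in $\kappa$ and $\E[p^\top b_{pp}p]$ is a quadratic in $\kappa$. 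The whole drift contribution is therefore \emph{exactly} a degree-two polynomial in $\kappa$, with no remainder.

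\emph{Diffusion term.} By \eqref{eqn: s poly of kappa}, $s_0$ is quadratic in $(p,\kappa)$, so $\frac{1}{4P}\E[s_0^2]$ is a quartic polynomial in $\kappa$. We invoke the coefficient computation of \Cref{sec: special 1 linear}: there $C_4=O(1)$ and $C_3=O(\sqrt P)$ under the standing assumptions $\|\Phi_0\|_{op},\|\Phi_0^{-1}\|_{op},\Phi_{\kappa\kappa},\|\Phi_{\kappa p}\|\in\Theta(1)$ and $\|Y\|=O(\sqrt P)$. After dividing by $P$, the cubic and quartic coefficients are $O(P^{-1/2})$ and $O(P^{-1})$ respectively, uniformly for $\kappa$ in compacts. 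The main care point is verifying the $C_3$ bound: the $i\beta^{-1}$ prefactor multiplies $\Phi_{\kappa p}(\Phi^\beta_{pp})^{-2}Y$, and we bound this by Cauchy--Schwarz as $\|\Phi_{\kappa p}\|\,\|(\Phi^\beta_{pp})^{-1}\|_{op}^2\|Y\|=O(\sqrt P)$. At $\beta=\infty$ the $\kappa^3$ term vanishes identically and the $\kappa^4$ term is $O(P^{-1})$, as in the explicit formula there.

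\emph{Conclusion.} Combining the two terms, $\partial_\tau\log\E_{\operatorname{post}}$ at $\tau=0$ equals $a_1\kappa+a_2\kappa^2+O(P^{-1/2})$ pointwise in $\kappa$; the $\kappa$-independent parts cancel. At $\tau=0$ the log-characteristic function is itself quadratic, by \eqref{eqn: kernel method}. So the first-order perturbed log-characteristic function $\log\E_{\operatorname{post},0}+\tau\,\partial_\tau\log\E_{\operatorname{post}}$ converges, as $P\to\infty$, pointwise to a quadratic in $\kappa$ that vanishes at $\kappa=0$. By Lévy's continuity theorem, this limit is the characteristic function of a Gaussian, provided the quadratic coefficient keeps the correct sign (a nonnegative variance), which holds for small $\tau$ by continuity from $\tau=0$. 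The expected main obstacle is making this $O(P^{-1/2})$ uniformity rigorous.
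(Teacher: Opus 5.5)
Your proposal is correct and follows the paper's argument. The drift contribution $\E_{p}[-\tfrac12 v^\top b(\Phi_0)v]$ is exactly quadratic in $\kappa$, because the Gaussian \eqref{eqn: int Gaussian def} has a $\kappa$-affine mean and a $\kappa$-free covariance; the diffusion contribution $\tfrac{1}{4P}\E[s_0^2]$ is quartic in $\kappa$, with cubic and quartic coefficients $C_3/P=O(P^{-1/2})$ and $C_4/(4P)=O(P^{-1})$, and you add useful detail the paper omits (the Cauchy--Schwarz bound on $C_3$, the vanishing cubic term at $\beta=\infty$, the sign of the variance). The only quibble is that Lévy's continuity theorem is not the right tool at the end, since the truncated expression is not known to be a characteristic function; you do not need it, because $\exp(-i\mu\kappa-\tfrac12\sigma^2\kappa^2)$ with $\sigma^2\ge 0$ is a Gaussian characteristic function by inspection.
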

Let us carefully compute the exact Gaussian for the simplified case $\beta\to \infty$, in which the higher order terms $\kappa^3, \kappa^4$ from linear networks are smaller at $O(P^{-1})$. 
Recall that (writing $s_0 = v^\top \Phi_0 v$ as a function of $\kappa$ and following notations in \Cref{sec: special 1 linear}):
$$\frac{1}{4P} \E_{p\sim\eqref{eqn: int Gaussian def}}\l[ s_0(\kappa)^2\r]=\frac{1+O(P^{-1})}{2}(1-\nu_0)\|x_0^\perp\|^2\kappa^2+C$$
where $C$ is independent with $\kappa$. Hence
$$ \partial_\tau\log Z^{(\tau)}_{\infty}= c_{\operatorname{act}}\cdot \E_{p\sim\eqref{eqn: int Gaussian def}}\l[ 
-\frac{1}{2} v^\top b(\Phi_0)v\r]+\frac{1+O(P^{-1})}{2}(1-\nu_0)\|x_0^\perp\|^2\kappa^2+C$$
Let us now find the respective feature map $f^{(\tau)}(\cdot)$ such that the induced kernel method \eqref{eqn: kernel method} has the same predictive posterior equivalent to the above.
Recall that the Neural ODE (\Cref{sec: special 2 neural ode}) gives that for 
$$ \partial_\tau\log Z^{\operatorname{kernel}}_{\infty}(x_0, \kappa; f_0(\cdot;\tau))= \E_{p\sim\eqref{eqn: int Gaussian def}}\l[ 
-\frac{1}{2} v^\top b(\Phi_0)v\r]+C$$
so
\begin{equation}\label{eqn: ode kernel partition perturb}
    \partial_\tau\log Z^{\operatorname{kernel}}_{\infty}(x_0, \kappa; f_0(\cdot; c_{\operatorname{act}}\tau))= c_{\operatorname{act}}\cdot \E_{\eqref{eqn: int Gaussian def}}\l[ 
-\frac{1}{2} v^\top b(\Phi_0)v\r]+C.
\end{equation}
Denote the quadratic polynomial expansion of $i\kappa$:
$$\E_{\eqref{eqn: int Gaussian def}}\l[ 
-\frac{1}{2} v^\top b(\Phi_0)v\r] = C-i\kappa c_1-\frac{1}{2}\kappa^2c_2, \quad c_1, c_2\in\R.$$
so
$$\log \frac{Z^{\operatorname{kernel}}_{\infty}(\cdot ; f_0)}{Z^{\operatorname{kernel}}_{\infty}(0; f_0)}=-i\kappa \l(\Phi_{\kappa p}\Phi_{pp}^{-1}Y+ \tau\cdot c_{\operatorname{act}}c_1\r)-\frac{1}{2}\kappa^2\l(\|x_0^\perp\|^2+\tau\cdot c_{\operatorname{act}}c_2\r)+O(\tau^2+P^{-1}).$$
Note that \eqref{eqn: kernel method} asserts (when $\beta=\infty$) that for kernel method the $\kappa^2$ coefficient in $\log Z$ only depends on the orthogonal component of test $f^{(\tau)}(x_0)$ to $f^{(\tau)}(\D)$, whereas the mean is determined from the linear combination of training labels. Here we see that the mean of first-order predictive posterior remains unchanged from the neural ODE, and the variance gets changed.  As a result, it is natural to define the feature map:
$$f^{(\tau)}: x_0\to \sum_{x_\mu\in \D}a_\mu f_0(x_\mu; c_{\operatorname{act}}\tau)+\lambda f_0^\perp(x_0; c_{\operatorname{act}}\tau)$$
where $f_0(x_\alpha)=f_0(x_\alpha)^{\perp}+\sum_{\mu\in\D}a_\mu f_0(x_\mu)$ and $f_0(x_\alpha)^{\perp}$ is orthogonal to $\operatorname{span}_\D(f_0(\cdot))$ (see also \eqref{eqn: kernel mean and var}). This yields the effective 
$$\log\frac{Z^{\operatorname{kernel}}(\cdot ; f^{(\tau)})}{Z^{\operatorname{kernel}}(0 ; f^{(\tau)})}=-i\kappa \l(\Phi_{\kappa p}\Phi_{pp}^{-1}Y+ \tau c_{\operatorname{act}}c_1\r)-\frac{1}{2}\kappa^2\lambda^2\l(\|x_0^\perp\|^2+\tau\cdot c_{\operatorname{act}}c_2\r)+O(\tau^2+P^{-1})$$
Note that \eqref{eqn: perturbative meta} evaluates $\log \frac{Z_{\infty}^{(\tau)}(x_0, \kappa )}{Z_{\infty}^{(\tau)}(0)}$ to
\begin{align*}
    -i\kappa \l(\Phi_{\kappa p}\Phi_{pp}^{-1}Y+ \tau c_{\operatorname{act}}c_1\r)-\frac{1}{2}\kappa^2\l(\|x_0^\perp\|^2+\tau\l[c_{\operatorname{act}}c_2+(\nu_0-1)\|x_0^\perp\|^2\r]\r)+O(\tau^2+P^{-1})
\end{align*}
we only need (\textit{independent} of $c_1$ and $c_2$)
$$\lambda^2=\frac{\|x_0^\perp\|^2+\tau\l[c_{\operatorname{act}}c_2+(\nu_0-1)\|x_0^\perp\|^2\r]}{\|x_0^\perp\|^2+\tau\cdot c_{\operatorname{act}}c_2}=1+(\nu_0-1)\tau+O(\tau^2)$$
for 
$$\log\frac{Z^{\operatorname{kernel}}(x_0, \kappa ; f^{(\tau)})}{Z^{\operatorname{kernel}}(0 ; f^{(\tau)})} = \log \frac{Z_{\infty}^{(\tau)}(x_0, \kappa )}{Z_{\infty}^{(\tau)}(0)}+O(\tau^2+P^{-1}).$$
To summarize the above, we have the following result.
\begin{theorem}[Equivalent feature map]\label{thm: first order feature map}
At zero temperature $\beta\to\infty$, kernel method induced by the feature map
$$f^{(\tau)}: x_0\to \sum_{x_\mu\in \D}a_\mu f_0(x_\mu; c_{\operatorname{act}}\tau)+\lambda f_0^\perp(x_0; c_{\operatorname{act}}\tau),\quad \lambda=1+\frac{1}{2}(\nu_0-1)\tau,$$
where $f_0(x_0; c_{\operatorname{act}}\tau = L/s^2)$ is the ODE feature map from \Cref{thm: ode feature map} and coefficients $a_\mu$ follow \eqref{eqn: kernel mean and var},
gives an equivalent predictive posterior characteristic function $\log Z(x_0, \kappa)$ (at any test point $x_0$) for Bayesian inference with NSDE up to $O(\tau^2+P^{-1})$ terms.
\end{theorem}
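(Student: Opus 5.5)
The plan is to compare two quadratic polynomials in $\kappa$ to first order in $\tau$: the log characteristic function of the NSDE predictive posterior, and that of the kernel method \eqref{eqn: kernel method} induced by $f^{(\tau)}$. Both must agree up to $O(\tau^2+P^{-1})$. Since both sides equal the Gaussian kernel-method answer with $\bar\Phi=\Phi_0$ at $\tau=0$, it suffices to match their $\tau$-derivatives at $\tau=0$. Only the $\kappa$-dependent parts matter, because constants cancel in $\log Z(x_0,\kappa)-\log Z(x_0,0)$.

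\textbf{NSDE side.} I would invoke \eqref{eqn: decompose d log Z} at $\beta=\infty$, which writes $\partial_\tau \log Z^{(\tau)}_\infty$ as a sum of two terms.
\begin{itemize}
\item The linear contribution comes from the $\beta=\infty$ computation \eqref{eqn: linear exp 1} in \Cref{sec: special 1 linear}. After dropping the $O(P^{-1})$ coefficient of $\kappa^4$, its $\kappa$-dependence is $\frac12(1-\nu_0)\|x_0^\perp\|^2\kappa^2$, with no linear-in-$\kappa$ term.
\item The drift contribution is $c_{\operatorname{act}}\E[-\frac12 v^\top b(\Phi_0)v]$. This is exactly a quadratic $C-i\kappa c_1-\frac12\kappa^2c_2$.
\end{itemize}
Adding the zeroth-order kernel-method terms gives the NSDE first-order characteristic exponent. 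Its mean is $\Phi_{\kappa p}\Phi_{pp}^{-1}Y+\tau c_{\operatorname{act}}c_1$ and its variance is $\|x_0^\perp\|^2+\tau[c_{\operatorname{act}}c_2+(\nu_0-1)\|x_0^\perp\|^2]$. (The sign of the linear piece follows from the $-\frac12\kappa^2$ convention.)

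\textbf{Kernel side.} I would use \eqref{eqn: ode kernel partition perturb} together with \Cref{thm: ode feature map}. These show that the ODE feature map $f_0(\cdot;c_{\operatorname{act}}\tau)$ reproduces exactly the drift part, with mean shift $\tau c_{\operatorname{act}}c_1$ and variance shift $\tau c_{\operatorname{act}}c_2$.

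Next I would check the effect of the modification defining $f^{(\tau)}$.
\begin{itemize}
\item Keeping the projections $\sum_\mu a_\mu f_0(x_\mu)$ and rescaling only $f_0^\perp(x_0)$ by $\lambda$ leaves the Gram matrix on $\D$ unchanged. It also leaves the cross-Gram $\bar\Phi_{\kappa p}$ unchanged, since the rescaled component is orthogonal to $\operatorname{span}f_0(\D)$.
\item By \eqref{eqn: kernel method} at $\beta=\infty$, the posterior mean $\bar\Phi_{\kappa p}\bar\Phi_{pp}^{-1}Y$ is therefore unchanged.
\item The posterior variance is $\|f^{(\tau)}(x_0)^\perp\|^2=\lambda^2\|f_0^\perp(x_0;c_{\operatorname{act}}\tau)\|^2$.
\end{itemize}
Matching the variances requires $\lambda^2(\|x_0^\perp\|^2+\tau c_{\operatorname{act}}c_2)=\|x_0^\perp\|^2+\tau[c_{\operatorname{act}}c_2+(\nu_0-1)\|x_0^\perp\|^2]+O(\tau^2)$. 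Expanding gives $\lambda^2=1+(\nu_0-1)\tau+O(\tau^2)$, so $\lambda=1+\frac12(\nu_0-1)\tau$.

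\textbf{Main obstacle.} The delicate step is controlling the error terms. I must verify that the discarded $\kappa^3,\kappa^4$ coefficients in $\frac{1}{4P}\E[s_0^2]$ are $O(P^{-1})$ at $\beta=\infty$. This holds because $C_3$ vanishes at $\beta=\infty$ and $C_4/(4P)=O(P^{-1})$ under the standing $\Theta(1)$ assumptions on $\Phi_0$.

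I must also verify that $\|f_0^\perp(x_0;c)\|^2=\|x_0^\perp\|^2+c\,c_2+O(c^2)$ is differentiable at $c=0$. For this I would differentiate the Schur complement of the ODE solution $\Phi_c$, using smoothness of the ODE flow. The same differentiability gives $O(\tau^2)$ remainders uniformly in $\kappa$ on compacts. Lévy continuity then gives equivalence of the predictive posteriors.
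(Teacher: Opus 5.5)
Your proposal is correct and is essentially the paper's argument. You read off the first-order mean shift $\tau c_{\operatorname{act}}c_1$ and variance shift $\tau[c_{\operatorname{act}}c_2+(\nu_0-1)\|x_0^\perp\|^2]$ from \eqref{eqn: decompose d log Z}, use \eqref{eqn: ode kernel partition perturb} to see that $f_0(\cdot;c_{\operatorname{act}}\tau)$ already reproduces the drift part, and note that rescaling only $f_0^\perp(x_0)$ by $\lambda$ leaves the training Gram and cross-Gram (hence the mean) unchanged while multiplying the variance by $\lambda^2$, which forces $\lambda^2=1+(\nu_0-1)\tau+O(\tau^2)$. Your extra checks are consistent with the paper and slightly more careful: the vanishing $\kappa^3$ and $O(P^{-1})$ $\kappa^4$ coefficients at $\beta=\infty$, matching multiplicatively instead of dividing by $\|x_0^\perp\|^2+\tau c_{\operatorname{act}}c_2$, and differentiability of the Schur complement along the ODE flow. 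One caveat you share with the paper: you invoke \eqref{eqn: decompose d log Z} at $\beta=\infty$, although its derivative--integral interchange was only justified for $\beta<\infty$.
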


\section{MLP convergence to SDE}\label{sec: convergence apdx}
Finally, let us take care of the exact convergence from MLP to NSDE regarding the partition function.
We will show two statements: a pointwise convergence statement (\Cref{thm: mlp part to nsde part}) as well as a (stronger) statement on the perturbative  limit (\texttt{Takeaway 4}, see \Cref{prop: unif convergence} below). Because one cannot define the partial differentiation with discrete Markov Chains in the MLP (as opposed to the NSDE in \Cref{sec: small depth-time expansion}), our perturbative analysis for the MLP partition function will be translated through the \emph{uniform} convergence of layer-wise generators.
Denote:
$$F_v(\Phi) \triangleq \exp\l(-\frac{1}{2}v^\top\Phi v\r)\quad \text{ for some fixed }v.$$
We will use $\E^{\Phi_0}$ to denote the prior since both the MLP and SDE conjugate kernel dynamics are Markovian and depend only on $\Phi_0$. Note that $F$ is bounded by $(0, 1]$ by PSD-ness of both MLP and SDE kernel trajectories (see \Cref{sec: prior with nsde}).
In this section we will also use $\Phi_N^{(\ell)}$ to denote the conjugate kernel at layer $\ell$ for a width-$N$ MLP and use $\Phi_t$ to (only) denote the NSDE from \eqref{eqn: nsde informal} kernel without the $t\to\tau$ time change. 
Our layer-wise convergence result (formalizing \texttt{Takeaway 4}) can be stated as follows.
\begin{theorem}[Layer-wise uniform convergence]\label{prop: unif convergence} Fix a radius $r>0$ and a terminal time $T>0$. Let $K_r=\{\Phi: f_{\operatorname{test}}(\Phi) \leq r\}\cap\mathcal{S}_{+}^P$ for the $f_{\operatorname{test}}$ in \Cref{thm: nsde relu} and \Cref{thm: nsde smooth}. Define the stopping times
$$
\kappa_r^{(N)}\triangleq \frac{1}{N}\inf \left\{\ell \geq 0: \Phi_{N}^{(\ell)} \notin K_r\right\} , \quad \kappa_r\triangleq\inf \left\{t \geq 0: \Phi_t\notin K_r\right\},
$$
and the stopped difference quotient
$$
D_N^r F\left(t, \Phi_0\right)\triangleq N\left(\mathbb{E}^{\Phi_0}\left[F\left( \Phi_{N}^{(\lfloor t N\rfloor+1) \wedge N \kappa_r^{(N)}}\right)\right]-\mathbb{E}^{\Phi_0}\left[F\left( \Phi_{N}^{(\lfloor t N\rfloor \wedge N \kappa_r^{(N)})}\right)\right]\right),
$$
with limit
$$
D^r F\left(t, \Phi_0\right)\triangleq\E^{\Phi_0}\left[h\left(\Phi^{t \land \kappa_r}\right) \one_{\left\{t<\kappa_r\right\}}\right] .
$$
where $h\triangleq \mathcal{L}F$ is the generator.
Then in both cases of ReLU and smooth activations,
$$
\lim_{N\to\infty}\;\;\sup _{t \in[0, T], \Phi_0 \in K_{r/2}}\left|D_N^r F\left(t, \Phi_0\right)-D^r F\left(t, \Phi_0\right)\right| =0 .
$$
\end{theorem}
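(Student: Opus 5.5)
The plan is to split the difference $D_N^rF-D^rF$ into a one-step generator error and a propagation error. Write $\ell=\lfloor tN\rfloor$ and use the Markov property of the stopped chain. Then
\begin{equation*}
D_N^rF(t,\Phi_0)=\E^{\Phi_0}\Big[\one_{\{\ell<N\kappa_r^{(N)}\}}\,N\big(\E^{\Phi_N^{(\ell)}}[F(\Phi_N^{(1)})]-F(\Phi_N^{(\ell)})\big)\Big]\triangleq \E^{\Phi_0}\big[\one_{\{\ell<N\kappa_r^{(N)}\}}h_N(\Phi_N^{(\ell)})\big],
\end{equation*}
where $h_N(\Phi)\triangleq N(\E^{\Phi}[F(\Phi^{(1)}_N)]-F(\Phi))$ is the one-layer discrete generator. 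This gives
\begin{equation*}
|D_N^rF-D^rF|\le \sup_{\Phi\in K_r}|h_N(\Phi)-h(\Phi)|+\Big|\E^{\Phi_0}\big[\one_{\{\ell<N\kappa^{(N)}_r\}}h(\Phi_N^{(\ell)})\big]-\E^{\Phi_0}\big[\one_{\{t<\kappa_r\}}h(\Phi_t)\big]\Big|.
\end{equation*}
We then show that each term vanishes uniformly in $t\in[0,T]$ and $\Phi_0\in K_{r/2}$.

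\textbf{Step 1: uniform one-step expansion on $K_r$.} Conditionally on $\Phi^{(\ell)}=\Phi$, the next pre-activations are $N$ i.i.d.\ $\N(0,\Phi)$ vectors $g_j$, and
\begin{equation*}
\Phi^{(\ell+1)}=\frac{c}{N}\sum_j\phi_s(g_j)\phi_s(g_j)^\top .
\end{equation*}
Expanding $\phi_s$ as in \eqref{eqn: shape smooth}, or exactly as in \eqref{eqn: shape relu}, with $s=\sqrt{N/\alpha_{\operatorname{act}}}$, and tracking the normalisation $c=c(s)$, gives
\begin{equation*}
\Phi^{(\ell+1)}-\Phi=\frac{\alpha_{\operatorname{act}}}{N}b(\Phi)+\frac{1}{\sqrt N}\Phi^{1/2}\xi\Phi^{1/2}+R_N ,
\end{equation*}
with $\E\|R_N\|^2=o(N^{-2})$, or at least $\E\|R_N\|=o(N^{-1})$ together with a third-moment control, uniformly over $\Phi\in K_r$. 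Uniformity holds because on $K_r$ the diagonals lie in $[1/r,r]$, so every Gaussian moment and the polynomial growth bound \texttt{A2} are controlled uniformly. This is the computation behind \Cref{thm: nsde relu} and \Cref{thm: nsde smooth}, and we will cite or redo it with explicit constants. Since $F=F_v$ is smooth with all derivatives bounded on $\mathcal S_+^P$, a second-order Taylor expansion of $F$ with a third-order remainder then yields
\begin{equation*}
\sup_{K_r}|h_N-h|\to0,
\end{equation*}
where $h=\mathcal LF$ is the expression appearing in \texttt{Takeaway 4}. The ReLU case needs care because $\phi_s$ is not $C^2$. There we compute the conditional moments directly through the arccos-kernel formulas, which is exactly how the drift $\rho$ arises, rather than Taylor expanding.

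\textbf{Step 2: propagation, uniformly in $\Phi_0$ and $t$.} By \Cref{thm: nsde relu} and \Cref{thm: nsde smooth}, the chain stopped at $\kappa_r^{(N)}$ converges in the Skorokhod topology to the diffusion stopped at $\kappa_r$. The function $h$ is bounded and continuous on $K_r$. The indicator $\one_{\{t<\kappa_r\}}$ is discontinuous only on the event that the path touches $\partial K_r$ exactly at time $t$ or grazes the boundary. That event has probability zero for each fixed $t$ because the diffusion is nondegenerate in the diagonal directions: the diagonals are driftless geometric Brownian motions in the ReLU case. Hence the second term tends to $0$ for each fixed $(t,\Phi_0)$.

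To upgrade this to uniform convergence, we use compactness of $[0,T]\times K_{r/2}$ together with equicontinuity. Continuity in $\Phi_0$ comes from the Feller property of the SDE, which has locally Lipschitz coefficients; for the chain we use a coupling with common Gaussian weights started from two nearby $\Phi_0$. Continuity in $t$ comes from Step 1, which gives the increment bound $|D_N^r|\le\sup_{K_r}|h_N|=O(1)$. An Arzelà--Ascoli or Dini-type argument then turns pointwise convergence into uniform convergence. The restriction $\Phi_0\in K_{r/2}$ keeps the starting point at positive distance from $\partial K_r$, so the exit time is bounded below in probability uniformly in $\Phi_0$.

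\textbf{Main obstacle.} We expect the uniformity in Step 2 to be the main obstacle. The two delicate points are the boundary-hitting discontinuity of the stopping indicator, in the limit of the discrete stopping rule, and the equicontinuity in $\Phi_0$ of the discrete chain. If the coupling argument is awkward, the first fallback is to replace the sharp indicator by a smooth cutoff $\chi(f_{\operatorname{test}}(\Phi))$ supported in $K_r$. We would prove the result for that cutoff, where the maps are continuous and uniform Skorokhod convergence on compacts follows from the convergence of generators via Ethier--Kurtz, and then let the cutoff sharpen. The error in that last step is controlled by \Cref{lem: finite non-explosion} together with the probability that the path lies near $\partial K_r$ at time $t$.
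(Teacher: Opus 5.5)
Your splitting and Step 1 match the paper's proof. The paper writes $D_N^rF=Q^r_{N,t}(A_NF)$ with the killed chain semigroup and bounds the error by $\varepsilon_N+\eta_N$, where $\varepsilon_N=\sup_{K_r}|A_NF-\mathcal L F|$ comes from the generator estimates of \cite{li2022neural}.

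The routes diverge in Step 2. The paper gets $\eta_N=\sup_{t\le T,\,\Phi_0\in K_{r/2}}|Q^r_{N,t}h-Q^r_th|\to0$ in one step from a Feller semigroup convergence theorem. Generator convergence on a core gives sup-norm convergence of the semigroups on bounded time intervals, so uniformity in the initial state is built in. You instead prove pointwise convergence and then upgrade by equicontinuity, and that upgrade, as written, has a gap.

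The bound $|D_N^rF|=O(1)$ only says that $G_N(t)=\E^{\Phi_0}[F(\Phi_N^{(\lfloor tN\rfloor\wedge N\kappa_r^{(N)})})]$ is asymptotically Lipschitz in $t$, since $D_N^rF$ is its difference quotient. It gives no modulus of continuity for the function you actually need to be equicontinuous, namely $g_N(t,\Phi_0)=\E^{\Phi_0}[\one_{\{\lfloor tN\rfloor<N\kappa_r^{(N)}\}}h(\Phi_N^{(\lfloor tN\rfloor)})]$. This differs from $D_N^rF$ by at most $\varepsilon_N$. Increments of $g_N$ in $t$ contain $\P^{\Phi_0}(\lfloor tN\rfloor<N\kappa_r^{(N)}\le\lfloor t'N\rfloor)$, so you would need a uniform-in-$N$ bound on the chain leaving $K_r$ in short time windows. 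The common-weight coupling in $\Phi_0$ runs into the same problem: two close coupled chains can still have different killing indicators, and bounding that requires the discrete chain not to linger near $\partial K_r$. The proposal supplies neither estimate.

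You can close the gap without any regularity of the chain by proving continuous convergence instead of equicontinuity. Take $(t_N,\Phi_0^N)\to(t,\Phi_0)$ in $[0,T]\times K_{r/2}$.
\begin{itemize}
\item The Ethier--Kurtz diffusion approximation behind \Cref{thm: nsde relu} and \Cref{thm: nsde smooth} allows convergent initial data. So the chain started at $\Phi_0^N$ and stopped at a radius $r'>r$ converges to the stopped SDE started at $\Phi_0$.
\item Your no-grazing remark can be made precise: each $\Phi_{\alpha\alpha}$ is an autonomous one-dimensional diffusion with volatility $\sqrt2\,\Phi_{\alpha\alpha}>0$. It is driftless in the ReLU case, and its drift depends only on $\Phi_{\alpha\alpha}$ in the smooth case. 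This gives $\P(\sup_{s\le t}f_{\operatorname{test}}(\Phi_s)=r)=0$.
\item Hence $y\mapsto\one\{\sup_{s\le t}f_{\operatorname{test}}(y_s)\le r\}\,h(y_t)$ is a.s.\ continuous at the limit, and continuity of the limit paths absorbs $t_N\to t$.
\end{itemize}
So $g_N(t_N,\Phi_0^N)\to g(t,\Phi_0)$ along every convergent sequence. On the compact set $[0,T]\times K_{r/2}$ this is equivalent to uniform convergence to a continuous limit.

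Your smooth-cutoff fallback is essentially the paper's semigroup route. There the cutoff must act on the stopped process $\Phi_{t\wedge\kappa_r}$, because killing depends on the whole path and not on $\Phi_t$ alone. The same boundary regularity is also what justifies applying a Feller-semigroup theorem to the killed semigroups $Q^r_{N,t},Q^r_t$, a point the paper's proof leaves implicit. The repaired version of your route is longer than the paper's, but it makes that ingredient explicit.
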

\begin{proof}[Proof of \Cref{prop: unif convergence}]
Because the MLP is a time-homogeneous Markov chain, let
$$
A_NF(\Phi)\triangleq
N\mathbb E^\Phi\!\left[
F(\Phi_N^{(1)})-F(\Phi)
\right],\quad D_N^r F\left(t, \Phi_0\right)=\mathbb{E}^{\Phi_0}\left[A_N F\left(\Phi_N^{\left(\lfloor tN \rfloor\right)}\right) \one_{\left\{\lfloor tN \rfloor<\kappa_r^{(N)}\right\}}\right]
$$
be the one-step generator of the unstopped discrete chain.
By definition we have \(F\in C_b^\infty\) (with constants depending only on $v$), and $h\triangleq \L F$ is continuous and bounded on \(K_r\).
\cite{li2022neural} shows convergence of the generator
$$\Phi_N^{(\ell+1)}-\Phi_N^{(\ell)}=\frac{1}{N}b_N\left(\Phi_N^{(\ell)}\right)+\frac{1}{\sqrt{N}}\sigma_N (\Phi_N^{(\ell)} ) \xi_{\ell+1}+O\left(N^{-3 / 2}\right)$$
with $b_N \to b, \sigma_N \sigma_N^{\top} \to \Sigma$ uniformly on compact sets and Lipschitz on $K_r$. From \cite[Lemma A.5 and Proposition A.6]{li2022neural}, we  have the local uniform generator convergence
$$
\varepsilon_N
\triangleq
\sup_{\Phi\in K_{r}}
\left|
A_NF(\Phi)-\mathcal L F(\Phi)
\right|
\to 0.
$$
Define the killed semigroups
$$
Q_{N, t}^r h\left(\Phi_0\right)\triangleq\mathbb{E}^{\Phi_0}\left[h\left(\Phi_N^{(\lfloor t N\rfloor)}\right) 
\one_{\left\{t<\kappa_r^{(N)}\right\}}\right],
\quad 
Q_t^r h\left(\Phi_0\right)\triangleq\mathbb{E}^{\Phi_0}\left[h\left(\Phi_t\right) \one_{\left\{t<\kappa_r\right\}}\right] .
$$
Then
$$
D_N^r F\left(t, \Phi_0\right)=Q_{N, t}^r\left(A_N F\right)\left(\Phi_0\right),
\quad
D^r F\left(t, \Phi_0\right)=Q_t^r(\mathcal{L} F)\left(\Phi_0\right) .
$$
Applying the Feller semigroup convergence criterion \cite[Theorem A.3]{li2022neural}, which gives convergence uniformly for bounded times once the generators converge on a core. Combined with \cite[Proposition A.6]{li2022neural} we get
$$
\eta_N
\triangleq
\sup_{t\in[0,T],\ \Phi_0\in K_
{r/2}}
\left|
Q_{N,t}^r h(\Phi_0)-Q_t^r h(\Phi_0)
\right|
\to 0.
$$
Although the core in \cite{li2022neural} is \(C_0^\infty\), our application is local (equivalently,
one may multiply \(F\) by a smooth cutoff equal to \(1\) on a neighborhood of \(K_{r}\)).
Combining the above and using that \(Q_{N,t}^r\) is a contraction in
the sup norm, we obtain
$$
\begin{aligned}
&\sup_{t\in[0,T],\ \Phi_0\in K_{r/2}} \left| D_N^r F(t,\Phi_0)-D^r F(t,\Phi_0)\right| 
\\\leq & 
\sup_{\Phi\in K_r}\left|A_NF(\Phi)-\mathcal L F(\Phi) \right|+
\sup_{t\in[0,T],\ \Phi_0\in K_{r/2}}\left| Q_{N,t}^r h(\Phi_0)-Q_t^r h(\Phi_0) \right| \\
= & \varepsilon_N+\eta_N \to 0
\end{aligned}
$$
which proves the claim in both the ReLU and smooth activation regimes.
\end{proof}
Let us conclude with the pointwise convergence in \Cref{thm: mlp part to nsde part}, in which we wanted to show that
\begin{align*}
\lim_{N\to\infty, L = t^\star N} Z_\beta^{(N, L, \D)} ({x}_{0}, {\kappa})\triangleq&\lim_{} \int_{\R^P}\exp\l[-\frac{1}{2 \beta}\|p\|^2+i p^{\top}Y\r] \mathbb{E}\left[ F(\Phi_N^{(L)})\right] \d p
    \\= &\int_{\R^P}\exp\l[-\frac{1}{2 \beta}\|p\|^2+i p^{\top}Y\r] \mathbb{E}_{\eqref{eqn: nsde informal}}\left[ F(\Phi_{t^\star})\right] \d p\triangleq Z_\beta^{(\tau)} ({x}_{0}, {\kappa} )
\end{align*}
\begin{proof}[Proof of \Cref{thm: mlp part to nsde part}]
    We start by showing that for any fixed $t>0$ and any $\Phi_0 \in \mathcal{S}_{+}^P$, one has that:
    \begin{equation}\label{eqn: F ptwise converge}
        \lim_{N\to\infty}
        \mathbb{E} \left[F\left(\Phi_{N}^{(\lfloor t N\rfloor)}\right)\right] = \mathbb{E} \left[F\left(\Phi_t\right)\right] .
    \end{equation}
for both ReLU and smooth activations (satisfying the assumptions {\texttt{A1-3}}). Once this is shown, then \Cref{thm: mlp part to nsde part} is concluded automatically by dominated convergence because $F$ is bounded.

For any large stopping radius $R$, again define
$\kappa_R^{(N)}$ and $\kappa_R$ are the local stopping times without finite-time explosion. The limiting process has continuous sample paths, and hence by the continuous mapping theorem for any fixed $t$:
$$
\Phi_N^{(\lfloor (t\land \kappa_R^{(N)})N\rfloor)}
\Rightarrow
\Phi_{t\land \kappa_R},  \quad \forall t
$$
by further the boundedness of $F$:
$$
\mathbb E^{\Phi_0}F\l(\Phi_N^{(\lfloor (t\land \kappa_R^{(N)})N\rfloor)}\r)
\to
\mathbb E^{\Phi_0}F\l(\Phi_{t\land \kappa_R}\r),\quad \forall t.
$$
We compare stopped and unstopped expectations. By boundedness
$$
\left|
\mathbb E^{\Phi_0}F\l(\Phi_N^{(\lfloor tN\rfloor)}\r)-\mathbb E^{\Phi_0}F\l(\Phi_N^{(\lfloor (t\land \kappa_R^{(N)})N\rfloor)}\r)
\right|
\leq
\P\l(\kappa_R^{(N)}\leq t\r).
$$
Similarly,
$$
\left|
\mathbb E^{\Phi_0}F\l(\Phi_{t}\r)-\mathbb E^{\Phi_0}F\l(\Phi_{t\land \kappa_R}\r)
\right|
\leq
\P(\kappa_R\le t).
$$
So we have
$$
\left|
\mathbb{E}^{\Phi_0} F\left(\Phi_N^{(\lfloor t N\rfloor)}\right) -\mathbb{E}^{\Phi_0} F\left(\Phi_t\right) 
\right|
\leq
\P(\kappa_R\le t)+\P\l(\kappa_R^{(N)}\leq t\r).
$$
By \Cref{lem: finite non-explosion}, sending $R\to\infty$ allows $\P(\kappa_R\le t)+\P\l(\kappa_R^{(N)}\leq t\r)\to 0$
and we are done.
\end{proof}
\section{Technical Lemmas and their proofs}\label{sec: lemmas}
\input{Lemmas}
\end{document}

%% file: pckgs.tex
\usepackage{amsmath,amssymb}
\usepackage[bookmarks, colorlinks=true, plainpages = false, citecolor = blue,linkcolor=blue,urlcolor = blue, filecolor = blue]{hyperref}
\usepackage{hyperref}
\usepackage{comment}
\usepackage{amsthm}
\usepackage{cleveref}
\usepackage{dsfont}

\renewcommand{\l}{\ell}
\newcommand{\E}{\mathbb{E}}
\newcommand{\R}{\mathbb{R}}
\renewcommand{\P}{\mathbb{P}}
\newcommand{\N}{\mathcal{N}}

\newcommand{\one}{\mathds{1}}

\usepackage{graphicx,graphics}
\usepackage{enumerate}

\newcommand{\var}{\textup{var}}
\newcommand{\cov}{\textup{cov}}

\newtheorem{lemma}{Lemma}

\newtheorem{theorem}{Theorem}

\newtheorem{proposition}{Proposition}
\newcommand{\D}{\mathcal{D}}

\newtheorem{definition}{Definition}

\renewcommand{\L}{\mathcal{L}}

\newcommand{\I}{\mathbb{I}}

\renewcommand{\tilde}{\widetilde}
\renewcommand{\l}{\left}
\renewcommand{\r}{\right}

\usepackage{algorithm}
\usepackage{algorithmic}
\usepackage{bm}
\newcommand{\lrp}[1]{\left( #1 \right)}
\newcommand{\lrb}[1]{\left[ #1 \right]}

\renewcommand{\d}{\mathrm{~d}}

\newcommand{\expp}[1]{\exp\lrp{#1}}

%% file: Lemmas.tex
\begin{proof}[Proof of \Cref{lem: finite non-explosion}]
Denote $F_r\triangleq\{x: f(x) \geq r\}$.
Let 
$Y_s^{n, r}\triangleq X_{s \wedge \tau_r^{(n)}}^n$ and $Y_s^r \triangleq X_{s \wedge \tau_r}.$
By local convergence,
$Y^{n, r} \Rightarrow Y^r$
in the Skorokhod topology.
Fix $r, t>0$. Choose $a>t$ such that the evaluation map $y \mapsto y(a)$ is a.s. continuous under $Y^r$. Such $a$ 's are dense because Càdlàg paths have at most countably many fixed-time discontinuities with positive probability.
Since $F_r$ is closed and the paths are right-continuous, whenever $\tau_r^{(n)}<\infty$, the hitting point satisfies
$$
X_{\tau_r^{(n)}}^n \in F_r
$$
Hence, if $\tau_r^{(n)} \leq t<a$, then the stopped process has already hit $F_r$, so
$$
Y_a^{n, r}=X_{\tau_r^{(n)}}^n \in F_r,\quad 
\left\{\tau_r^{(n)} \leq t\right\} \subseteq\left\{Y_a^{n, r} \in F_r\right\}.
$$
Thus
$$
\limsup _n \mathbb{P}\left(\tau_r^{(n)} \leq t\right) \leq \limsup _n \mathbb{P}\left(Y_a^{n, r} \in F_r\right)
$$
By the continuous mapping theorem, $Y_a^{n, r} \Rightarrow Y_a^r$. Since $F_r$ is closed, Portmanteau gives
$$
\limsup _n \mathbb{P}\left(Y_a^{n, r} \in F_r\right) \leq \mathbb{P}\left(Y_a^r \in F_r\right)
$$
But $Y_a^r \in F_r$ if and only if $\tau_r \leq a$. Hence
$$
\limsup _n \mathbb{P}\left(\tau_r^{(n)} \leq t\right) \leq \mathbb{P}\left(\tau_r \leq a\right)\quad \forall a>t.
$$
Now take $a \downarrow t$ and by continuity of probability from above, we get the desired claim.
\end{proof}

\begin{proof}[Proof of \Cref{lem: differentiation of expectation}]
Denote $Lf(x)\triangleq b(x)^\top \nabla f(x)
+\frac12 \operatorname{Tr}\!\left(\sigma(x)\sigma(x)^\top \nabla^2 f(x)\right).$
By Itô's formula applied to \(X_{t\wedge \tau_R}\),
\[
f(X_{t\wedge \tau_R})-f(x_0)
=
\int_0^t \mathbf 1_{\{s<\tau_R\}}Lf(X_s)\,ds
+
\int_0^t \mathbf 1_{\{s<\tau_R\}}
\nabla f(X_s)^\top \sigma(X_s)\,dW_s .
\]
The stochastic integral is a martingale, hence
\[
\frac{\mathbb E[f(X_{t\wedge \tau_R})]-f(x_0)}{t}
=
\frac1t\int_0^t
\mathbb E\!\left[\mathbf 1_{\{s<\tau_R\}}Lf(X_s)\right]\,ds .
\]
As $s\to 0$, \(X_s\to x_0\) a.s. and
\(\mathbf 1_{\{s<\tau_R\}}\to1\) a.s. Therefore, by dominated convergence,
\[
\mathbb E\!\left[\mathbf 1_{\{s<\tau_R\}}Lf(X_s)\right]
\longrightarrow Lf(x_0),
\]
and consequently
\[
\lim_{t\to 0}
\frac{\mathbb E[f(X_{t\wedge \tau_R})]-f(x_0)}{t}
=Lf(x_0).
\]
It remains to compare this with the killed process. Since \(f\) is bounded,
\[
\left|
\mathbb E[f(X_t)\mathbf 1_{\{t<\tau \}}]
-
\mathbb E[f(X_{t\wedge \tau_R})]
\right|
\le
2\|f\|_\infty \mathbb P(\tau_R\le t).
\]
We show that \(\mathbb P(\tau_R\le t)=o(t)\) for small $t$ for any $R$ such that $\tau_{R/2}>0$ a.s. Let $K_R \triangleq  \{\Phi:  \max\{(\min_\alpha \Phi_{\alpha\alpha})^{-1}, \max_\alpha\Phi_{\alpha\alpha}\}\leq R\}\cap \mathcal{S}_{+}$ be a compact domain. Let  $K_b\triangleq \sup_{K_R}\|b\|_{\infty}$  and $K_\sigma\triangleq \sup_{K_R}\|\sigma(x)\|_\infty$ and they are both finite.
For \(u\le t\),
\[
X_{u\wedge \tau_R}-x_0
=
\int_0^{u\wedge \tau_R}b(X_s)\,ds
+
\int_0^{u\wedge \tau_R}\sigma(X_s)\,dW_s .
\]
If \(t\) is small enough that \(\|X_0\|_{\infty}+K_b t\le R/2\), then
\[
\{\max_\alpha(\Phi_{t\land \tau_R})_{\alpha\alpha}=R\}
\subseteq
\left\{
\sup_{u\in [0, t]}
\left|
\int_0^{u\wedge \tau_R}\sigma(X_s)\,dW_s
\right|
\ge R/2
\right\}.
\]
By the Burkholder--Davis--Gundy inequality,
$$
\P\l(\max_\alpha(\Phi_{t\land \tau_R})_{\alpha\alpha}=R\r)
\le
\frac{C}{R^4}
\mathbb E\left[
\sup_{u\in [0, t]}
\left|
\int_0^{u\wedge \tau_R}\sigma(X_s)\,dW_s
\right|^4
\right]
\le
\frac{C K_\sigma^4}{R^4}t^2
=o(t).
$$
Similarly, $\P\l(\max_\alpha(\Phi_{t\land \tau_R})^{-1}_{\alpha\alpha}=R\r)\in o(t)$ as well.
Hence
\[
\frac{
\mathbb E[f(X_t)\mathbf 1_{\{t<\tau \}}]
-
\mathbb E[f(X_{t\wedge \tau_R})]
}{t}
\to 0.
\]
Combining the preceding limits yields exactly
$
\lim_{t\downarrow0}
\frac{\mathbb E[f(X_t)\mathbf 1_{\{t<\tau \}}]-f(x_0)}{t}
=
Lf(x_0).
$
\end{proof}

\begin{lemma}[Equivalent Neural Covariance SDE in matrix form]\label{lem: NSDE matrix}
Given a symmetric positive-definite $\Phi\in\R^{m\times m}$, let $M = m(m+1) / 2$ and define
$\Sigma\triangleq \left[\Phi_{\alpha \gamma} \Phi_{\beta \delta}+\Phi_{\alpha \delta} \Phi_{\beta \gamma}\right]_{\alpha \leq \beta, \gamma \leq \delta} \in\R^{M\times M}.$ Suppose $B\in\R^M$ has i.i.d. $\N(0, 1)$ entries and $G\in\R^{m\times m}$ is symmetric and has $\N(0, 1)$ off-diagonal and $\N(0, 2)$ diagonal entries, then
$$\l(\Sigma^{1/2}B\r)_{\alpha\leq \beta}=_d\l(\Phi^{1/2}G\Phi^{1/2}\r)_{\alpha\leq \beta}$$
equal in distribution for all $\alpha, \beta\in [m]$.
\end{lemma}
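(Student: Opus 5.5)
To prove Lemma~\ref{lem: NSDE matrix}, I will use the fact that both sides are centered Gaussian vectors in $\R^M$. The left side $\Sigma^{1/2}B$ is a linear image of the standard Gaussian $B$. The right side $(\Phi^{1/2}G\Phi^{1/2})_{\alpha\le\beta}$ is a linear image of the independent Gaussian entries $(G_{\gamma\le\delta})$. A centered Gaussian law is determined by its covariance, so it suffices to show that the covariance of the right side is $\Sigma$. Along the way I will note that $\Sigma\succeq 0$, so $\Sigma^{1/2}$ makes sense. This follows automatically once $\Sigma$ is identified as a covariance matrix, and $\Phi\succ0$ is not needed for that.

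The first step is to compute the covariance structure of $G$. Writing $G=\frac{1}{\sqrt2}(\tilde G+\tilde G^\top)$ with $\tilde G$ having i.i.d.\ $\N(0,1)$ entries reproduces exactly the stated law: off-diagonal entries are $\N(0,1)$ and diagonal entries are $\N(0,2)$. This representation gives the identity
$$\E[G_{ij}G_{kl}]=\delta_{ik}\delta_{jl}+\delta_{il}\delta_{jk},$$
which I will check directly on the diagonal and off-diagonal cases. Equivalently, for any symmetric matrices $U,V$, one has $\E[\operatorname{Tr}(UG)\operatorname{Tr}(VG)]=2\operatorname{Tr}(UV)$.

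The second step is to set $A=\Phi^{1/2}$ and expand $(AGA)_{\alpha\beta}=\sum_{i,j}A_{\alpha i}G_{ij}A_{j\beta}$. Then
$$\E[(AGA)_{\alpha\beta}(AGA)_{\gamma\delta}]=\sum_{i,j,k,l}A_{\alpha i}A_{j\beta}A_{\gamma k}A_{l\delta}(\delta_{ik}\delta_{jl}+\delta_{il}\delta_{jk}).$$
Using the symmetry of $A$ and $A^2=\Phi$, this sum collapses to $\Phi_{\alpha\gamma}\Phi_{\beta\delta}+\Phi_{\alpha\delta}\Phi_{\beta\gamma}$, which is precisely the $((\alpha,\beta),(\gamma,\delta))$ entry of $\Sigma$. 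Restricting to the index pairs $\alpha\le\beta$ and $\gamma\le\delta$ shows that the vectorized upper triangle of $AGA$ has covariance $\Sigma$. In particular $\Sigma$ is PSD, and $\Sigma^{1/2}B\sim\N(0,\Sigma)$ has the same law.

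The only delicate point is bookkeeping. I need to ensure that the convention ``diagonal variance $2$'' is the one that matches $\Sigma_{(\alpha\alpha),(\alpha\alpha)}=2\Phi_{\alpha\alpha}^2$. This is confirmed by the identity for $\E[G_{ij}G_{kl}]$ at $i=j=k=l$, which gives $2$. I will also make sure the restriction to the upper triangle is applied consistently to both indices. There is no analytic obstacle. Finally, I will remark that this identity is what makes $\Phi_t^{1/2}\d B_t\Phi_t^{1/2}$ agree with the diffusion coefficient $\Sigma^{1/2}$ used in \cite{li2022neural}: their quadratic variations coincide, so the two SDEs have the same generator and therefore the same law.
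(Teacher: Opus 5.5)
Your proposal is correct and follows essentially the same route as the paper. You set $A=\Phi^{1/2}$, use $\E[G_{ij}G_{kl}]=\delta_{ik}\delta_{jl}+\delta_{il}\delta_{jk}$ to show that the upper-triangular vectorization of $AGA$ has covariance $\Sigma$, and conclude by matching centered Gaussian laws; you are slightly more explicit than the paper in noting that both sides are jointly Gaussian (the paper mentions only mean zero) and that this identification gives $\Sigma\succeq 0$, so $\Sigma^{1/2}$ is well defined.
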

\begin{proof}
    Let
$
A\triangleq \Phi^{1 / 2}=A^{\top},
$
then we only need to show that the covariance matrix of $(A G A)_{\alpha\leq \beta}\in\R^{M}$ is exactly $\Sigma$. For any $\alpha, \beta, \gamma, \delta$,
$$
(A G A)_{\alpha \beta}=\sum_{i, j=1}^m A_{\alpha i} G_{i j} A_{\beta j} .
$$
Since  $\mathbb{E}\left[G_{i j} G_{k \ell}\right]=\delta_{i k} \delta_{j \ell}+\delta_{i \ell} \delta_{j k}$ for all $i, j, k, \ell$:
$$
\begin{aligned}
\mathbb{E}\left[(A G A)_{\alpha \beta}(A G A)_{\gamma \delta}\right] & =\sum_{i, j, k, \ell} A_{\alpha i} A_{\beta j} A_{\gamma k} A_{\delta \ell} \mathbb{E}\left[G_{i j} G_{k \ell}\right] \\
& =\sum_{i, j, k, \ell} A_{\alpha i} A_{\beta j} A_{\gamma k} A_{\delta \ell}\left(\delta_{i k} \delta_{j \ell}+\delta_{i \ell} \delta_{j k}\right) \\
& =\sum_{i, j} A_{\alpha i} A_{\beta j} A_{\gamma i} A_{\delta j}+\sum_{i, j} A_{\alpha i} A_{\beta j} A_{\gamma j} A_{\delta i}
\\ & =\left(\sum_i A_{\alpha i} A_{\gamma i}\right)\left(\sum_j A_{\beta j} A_{\delta j}\right)+\left(\sum_i A_{\alpha i} A_{\delta i}\right)\left(\sum_j A_{\beta j} A_{\gamma j}\right) \\
& =\left(A A^{\top}\right)_{\alpha \gamma}\left(A A^{\top}\right)_{\beta \delta}+\left(A A^{\top}\right)_{\alpha \delta}\left(A A^{\top}\right)_{\beta \gamma}
\end{aligned}
$$
Because $A=\Phi^{1 / 2}$ is symmetric, $A A^{\top}=\Phi$. Therefore
$$
\mathbb{E}\left[(A G A)_{\alpha \beta}(A G A)_{\gamma \delta}\right]=\Phi_{\alpha \gamma} \Phi_{\beta \delta}+\Phi_{\alpha \delta} \Phi_{\beta \gamma} .
$$
Since the mean of $G$ is zero, this concludes the proof. This proof also justifies the difference in form between our \eqref{eqn: nsde informal} versus the vector version in \cite{li2022neural}.
\end{proof}
\begin{lemma}[Bayesian partition function to conjugate kernel]\label{lem: Bayes partition}
    For any model $f(x) = W_{\operatorname{out}}^{\top}h(x)$  with a linear layer as its last where $W_{\operatorname{out}}\sim\N(0, \I_N)$, the Bayes partition function of the predictive posterior \eqref{eqn: bayes partition def} under MSE loss at test point $x_0$ (and training data $X\in\R^{\cdot \times P}, Y\in\R^P$):
    $$Z_\beta\left({x}_{0}, {\kappa}\right)= (2 \pi \beta)^{P / 2} \mathbb{E}_{\operatorname{prior}}\left[\exp \left[-\frac{\beta}{2}\left\|Y-W_{\operatorname{out}}^{\top}h(X)\right\|_2^2-i\kappa f({x}_{0})\right]\right]$$
    is equal to
    $$Z_\beta\left({x}_{0}, {\kappa}\right)= \int_{\R^P}\exp\l[-\frac{1}{2 \beta}\|p\|^2+i p^{\top}Y\r]\cdot \mathbb{E}_{\operatorname{prior}}\left[
    \exp\l(-\frac{v^{\top}\Phi v}{2}\r)\right] \d p$$
    where $v = [p^{\top}, \kappa]^{\top}\in\R^{P+1}$ and $\Phi = \l[h(X), h(x_0)\r]^{\top}\l[h(X), h(x_0)\r]\in\R^{(P+1)\times (P+1)}$.
\end{lemma}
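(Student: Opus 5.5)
The plan is to integrate out the Gaussian readout $W_{\operatorname{out}}$ exactly, after linearizing the quadratic loss with a Fourier (Hubbard--Stratonovich) identity. Condition on all hidden weights, so $h(\cdot)$ is fixed. Write $u = h(X)^\top W_{\operatorname{out}}\in\R^P$ for the training outputs and $f(x_0)=h(x_0)^\top W_{\operatorname{out}}$. The identity
$$(2\pi\beta)^{P/2}\exp\l(-\frac{\beta}{2}\|Y-u\|^2\r)=\int_{\R^P}\exp\l(-\frac{1}{2\beta}\|p\|^2+ip^\top (Y-u)\r)\d p$$
is just the Gaussian characteristic function. The prefactor $(2\pi\beta)^{P/2}$ in \eqref{eqn: bayes partition def} is exactly the normalizing constant that makes this hold with no leftover constant.

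First step: substitute this into the expectation defining $Z_\beta$. The integrand now contains
$$\exp\l(-i p^\top h(X)^\top W_{\operatorname{out}}-i\kappa h(x_0)^\top W_{\operatorname{out}}\r)=\exp\l(-i\,(H v)^\top W_{\operatorname{out}}\r),$$
where $H=[h(X),h(x_0)]\in\R^{N\times(P+1)}$ and $v=[p^\top,\kappa]^\top$.

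Second step: apply Fubini to exchange $\E_{\operatorname{prior}}$ with $\int \d p$. This is justified because the modulus of the integrand is $e^{-\|p\|^2/(2\beta)}$, which is integrable and independent of the weights when $\beta<\infty$.

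Third step: compute the conditional expectation over $W_{\operatorname{out}}\sim\N(0,\I_N)$ via $\E[e^{-ia^\top W}]=e^{-\|a\|^2/2}$ with $a=Hv$. This gives $\exp(-\tfrac12 v^\top H^\top H v)=\exp(-\tfrac12 v^\top\Phi v)$, and the remaining factor is $e^{ip^\top Y}$. Then take the expectation over the remaining (hidden) weights, using independence of $W_{\operatorname{out}}$ from them and the tower property; this gives exactly the claimed formula.

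The only delicate point is the interchange of the integral and the expectation. Since $\Phi\succeq 0$, the factor $e^{-v^\top\Phi v/2}$ lies in $(0,1]$, so absolute integrability is immediate for finite $\beta$, and there is no real obstacle. The case $\beta=\infty$ would be treated as a limit and is not needed for the stated lemma. One should also track the sign convention $e^{+ip^\top Y}$ versus $e^{-ip^\top u}$, but the substitution $p\mapsto -p$ makes either choice consistent.
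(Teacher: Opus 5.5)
Your proposal is correct and takes essentially the same route as the paper: linearize the squared loss with the Gaussian Fourier identity (its normalization matches the $(2\pi\beta)^{P/2}$ prefactor), exchange the $p$-integral with the prior expectation, and integrate out $W_{\operatorname{out}}$ through its characteristic function to get $e^{-v^\top\Phi v/2}$. Your explicit Fubini justification, based on the integrand having modulus $e^{-\|p\|^2/(2\beta)}$, is a small gain in rigor, since the paper performs that interchange without comment.
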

\begin{proof}
From the Gaussian integral
$$
1=\int_{\mathbb{R}^P} \frac{\d t}{(2 \pi \beta)^{P / 2}} \exp \left[-\frac{1}{2 \beta}\left\|t^{\top}-i \beta\left(Y-W_{\operatorname{out}}^{\top}h(X)\right)\right\|^2\right]
$$
we have
$$
\exp \left[-\frac{\beta}{2}\left\|Y-W_{\operatorname{out}}^{\top}h(X)\right\|^2\right] =\int_{\R^P} \frac{\d t}{(2 \pi \beta)^{P / 2}} \exp \left[-\frac{1}{2 \beta}\|t\|^2+i t^{\top}\left(Y-W_{\operatorname{out}}^{\top}h(X)\right)\right]
$$
so
$$
\begin{aligned}
    Z_\beta\left({x}_{0}, {\kappa}\right)&= (2 \pi \beta)^{P / 2} \mathbb{E}_{\operatorname{prior}}\left[\exp \left[-\frac{\beta}{2}\left\|Y-W_{\operatorname{out}}^{\top}h(X)\right\|_2^2-i\kappa f({x}_{0})\right]\right]\\
    &= \mathbb{E}_{\operatorname{prior}}\left[\int_{\R^P}\exp \left[-\frac{1}{2 \beta}\|t\|^2+i t^{\top}Y-it^{\top}W_{\operatorname{out}}^{\top}h(X)-i\kappa W_{\operatorname{out}}^{\top}h({x}_{0})\right]\d t\right]\\
    &= \int_{\R^P}\exp\l[-\frac{1}{2 \beta}\|t\|^2+i t^{\top}Y\r]\cdot \mathbb{E}_{\operatorname{prior}}\left[
    \exp\l(-\frac{[t^{\top}, \kappa]\Phi [t^{\top}, \kappa]^{\top}}{2}\r)\right] \d t
\end{aligned}
$$
where in the last line, we integrate over $W_{\operatorname{out}}$ to get the desired result.
\end{proof}
\begin{lemma}[PSD of the limit]\label{lem: psd of limit}
Suppose $X^n \Rightarrow X$ locally in $D_{\mathbb{R}_{+}, \mathbb{R}^{m \times m}}$ under test function $f$ without finite-time explosion. If the pre-limit is PSD
$$
\mathbb{P}\left(X_t^n \in \mathcal{S}_{+}, \forall t \geq 0\right)=1 \quad \text { for every } n,
$$
then the limit is also PSD:
$
\mathbb{P}\left(X_t \in \mathcal{S}_{+}, \forall t \geq 0\right)=1.
$
\end{lemma}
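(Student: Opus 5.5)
We need to prove the PSD-of-limit lemma. The statement is: if $X^n$ is PSD for all times and converges locally to $X$ with no explosion, then $X$ is PSD for all times almost surely.

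The plan is to first pass PSD-ness to the stopped limit processes $Y^r_t = X_{t\wedge\tau_r}$ for each fixed $r$, and then remove the stopping by non-explosion and path continuity.

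\textbf{Step 1: closedness of the PSD set on path space.} Let $\mathcal{C}\triangleq\{y\in D_{\mathbb{R}_+,\mathbb{R}^{m\times m}}: y(t)\in\mathcal{S}_+\ \forall t\ge 0\}$. I claim $\mathcal{C}$ is closed in the Skorokhod topology. Suppose $y_k\to y$ in Skorokhod. Then there are time changes $\lambda_k$ converging uniformly on compacts to the identity with $y_k\circ\lambda_k\to y$ locally uniformly. Hence for every $t$, $y(t)=\lim_k y_k(\lambda_k(t))$ is a limit of PSD matrices. Since $\mathcal{S}_+$ is closed in $\mathbb{R}^{m\times m}$, we get $y(t)\in\mathcal{S}_+$.

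\textbf{Step 2: the stopped limit is PSD.} The stopped prelimit $Y^{n,r}_t=X^n_{t\wedge\tau^{(n)}_r}$ takes values in $\{X^n_s\}$, so $\mathbb{P}(Y^{n,r}\in\mathcal{C})=1$. Local convergence gives $Y^{n,r}\Rightarrow Y^r$. By Portmanteau applied to the closed set $\mathcal{C}$,
$$\mathbb{P}(Y^r\in\mathcal{C})\ge\limsup_n\mathbb{P}(Y^{n,r}\in\mathcal{C})=1.$$
In other words, almost surely $X_{t\wedge\tau_r}\in\mathcal{S}_+$ for all $t$, and hence $X_t\in\mathcal{S}_+$ for all $t<\tau_r$.

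\textbf{Step 3: remove the localization.} Intersect over $r\in\mathbb{N}$; this is still a probability-one event, on which $X_t\in\mathcal{S}_+$ for all $t<\sup_r\tau_r$. Non-explosion gives $\lim_r\tau_r=\infty$ almost surely, which covers every $t\ge 0$ and concludes the proof.

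\textbf{Main obstacle.} The only delicate point is Step 1, namely verifying that the time-indexed constraint is closed under Skorokhod convergence and not merely under pointwise convergence. The time-change characterization handles this directly. One could alternatively restrict to continuity times of the limit and then extend by right-continuity of paths, but that route is unnecessary given the time-change argument.
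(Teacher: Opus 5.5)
Your proof is correct. Steps 1 and 2 match the paper's proof. The paper also shows that the set $D(\mathcal{S}_+)$ of paths staying in $\mathcal{S}_+$ is Skorokhod-closed via time changes, and it then applies Portmanteau to $Y^{n,r}\Rightarrow Y^r$ to get $\mathbb{P}(Y^r\in D(\mathcal{S}_+))=1$.

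The difference is in removing the localization, and there your route is the sound one. The paper observes that $Y^r=X$ on $\{\tau_r=\infty\}$, deduces $\mathbb{P}(X\in D(\mathcal{S}_+))\ge\mathbb{P}(\tau_r=\infty)$, and then claims $\mathbb{P}(\tau_r=\infty)\to 1$ by non-explosion. But non-explosion, as defined, only gives $\tau_r\uparrow\infty$ almost surely, which does not imply $\mathbb{P}(\tau_r=\infty)\to 1$. In the ReLU NSDE, for example, the diagonal entries are driftless geometric Brownian motions with $\log\Phi_{\alpha\alpha}(t)=\log\Phi_{\alpha\alpha}(0)-t+\sqrt{2}W_t\to-\infty$. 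So $\tau_r<\infty$ a.s.\ for every $r$, and the paper's bound is vacuous.

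Your Step 3 uses exactly what non-explosion provides. The countable intersection over $r\in\mathbb{N}$ gives $X_t\in\mathcal{S}_+$ for all $t\le\tau_r$ and all $r$ simultaneously, and then $\sup_r\tau_r=\infty$ a.s.\ finishes the argument. The paper's line could also be repaired by replacing $\{\tau_r=\infty\}$ with $\{\tau_r>T\}$, using $\mathbb{P}(\tau_r>T)\to 1$, and intersecting over $T\in\mathbb{N}$.
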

\begin{proof}
Let
$
D\left(\mathcal{S}_{+}\right)\triangleq\left\{x \in D_{\mathbb{R}_{+}, \mathbb{R}^{m \times m}}: x(t) \in \mathcal{S}_{+} \forall t \geq 0\right\}.$
First note that $D\left(\mathcal{S}_{+}\right)$is closed in the Skorokhod topology. Indeed, $\mathcal{S}_{+} \subseteq \mathbb{R}^{m \times m}$ is closed. If $x^k \rightarrow x$ in Skorokhod topology and $x^k(t) \in \mathcal{S}_{+}$for all $t$, then, on every compact interval, there exist time changes $\lambda_k$ such that
$$
\sup _{s \leq T}\left\|x^k\left(\lambda_k(s)\right)-x(s)\right\| \rightarrow 0 .
$$
Since $\lambda_k$ is onto, $x^k\left(\lambda_k(s)\right) \in \mathcal{S}_{+}$. Thus $x(s)$ is a limit of points in $\mathcal{S}_{+}$, so $x(s) \in \mathcal{S}_{+}$. Hence $x \in D\left(\mathcal{S}_{+}\right)$.
Fix $r>0$, and define the stopped processes
$$Y_t^{n, r}:=X_{t \wedge \tau_r^{(n)}}^n, \quad Y_t^r:=X_{t \wedge \tau_r}.$$
By local convergence,
$
Y^{n, r} \Rightarrow Y^r
$
in the Skorokhod topology. Since $X^n$ lives in $\mathcal{S}_{+}$a.s., so does $Y^{n, r}$; hence
$$
\mathbb{P}\left(Y^{n, r} \in D\left(\mathcal{S}_{+}\right)\right)=1 .
$$
Because $D\left(\mathcal{S}_{+}\right)$ is closed, Portmanteau gives
$$
1=\lim \sup \mathbb{P}\left(Y^{n, r} \in D\left(\mathcal{S}_{+}\right)\right) \leq \mathbb{P}\left(Y^r \in D\left(\mathcal{S}_{+}\right)\right) .
$$
so $\mathbb{P}\left(Y^r \in D\left(\mathcal{S}_{+}\right)\right)=1$.
Now use non-explosion. On the event $\left\{\tau_r=\infty\right\}$, we have
$$
Y_t^r=X_{t \wedge \tau_r}=X_t,\quad 
\left\{\tau_r=\infty\right\} \cap\left\{Y^r \in D\left(\mathcal{S}_{+}\right)\right\} \subseteq\left\{X \in D\left(\mathcal{S}_{+}\right)\right\} .
$$
Since $\mathbb{P}\left(Y^r \in D\left(\mathcal{S}_{+}\right)\right)=1$, it follows that
$$
\mathbb{P}\left(X \in D\left(\mathcal{S}_{+}\right)\right) \geq \mathbb{P}\left(\tau_r=\infty\right) .
$$
Using non-explosion,
$
\lim _{r \rightarrow \infty} \mathbb{P}\left(\tau_r=\infty\right)=1,
$
we obtain
$
\mathbb{P}\left(X_t \in \mathcal{S}_{+} \forall t \geq 0\right)=1 .
$
\end{proof}

%% file: ref.bib
@article{li2022neural,
  title={The neural covariance SDE: Shaped infinite depth-and-width networks at initialization},
  author={Li, Mufan and Nica, Mihai and Roy, Dan},
  journal={Advances in Neural Information Processing Systems},
  volume={35},
  pages={10795--10808},
  year={2022}
}

@article{hanin2019finite,
  title={Finite depth and width corrections to the neural tangent kernel},
  author={Hanin, Boris and Nica, Mihai},
  journal={arXiv preprint arXiv:1909.05989},
  year={2019}
}

@misc{hanin2025globaluniversalitysingularvalues,
      title={Global Universality of Singular Values in Products of Many Large Random Matrices}, 
      author={Boris Hanin and Tianze Jiang},
      year={2025},
      eprint={2503.07872},
      archivePrefix={arXiv},
      primaryClass={math.PR},
      url={https://arxiv.org/abs/2503.07872}, 
}

@article{hanin2024bayesian,
  title={Bayesian inference with deep weakly nonlinear networks},
  author={Hanin, Boris and Zlokapa, Alexander},
  journal={arXiv preprint arXiv:2405.16630},
  year={2024}
}

@article{hanin2023bayesian,
  title={Bayesian interpolation with deep linear networks},
  author={Hanin, Boris and Zlokapa, Alexander},
  journal={Proceedings of the National Academy of Sciences},
  volume={120},
  number={23},
  pages={e2301345120},
  year={2023},
  publisher={National Academy of Sciences}
}

@inproceedings{hayou2022curse,
  title = 	 {The Curse of Depth in Kernel Regime},
  author =       {Hayou, Soufiane and Doucet, Arnaud and Rousseau, Judith},
  booktitle = 	 {Proceedings on "I (Still) Can't Believe It's Not Better!" at NeurIPS 2021 Workshops},
  pages = 	 {41--47},
  year = 	 {2022},
  publisher =    {PMLR},
  url = 	 {https://proceedings.mlr.press/v163/hayou22a.html},
}

@article{chizat2019lazy,
  title={On lazy training in differentiable programming},
  author={Chizat, Lenaic and Oyallon, Edouard and Bach, Francis},
  journal={Advances in neural information processing systems},
  volume={32},
  year={2019}
}

@misc{jacot2020neuraltangentkernelconvergence,
      title={Neural Tangent Kernel: Convergence and Generalization in Neural Networks}, 
      author={Arthur Jacot and Franck Gabriel and Clément Hongler},
      year={2020},
      eprint={1806.07572},
      archivePrefix={arXiv},
      primaryClass={cs.LG},
      url={https://arxiv.org/abs/1806.07572}, 
}

@article{matthews2018gaussian,
  title={Gaussian process behaviour in wide deep neural networks},
  author={Matthews, Alexander G de G and Rowland, Mark and Hron, Jiri and Turner, Richard E and Ghahramani, Zoubin},
  journal={arXiv preprint arXiv:1804.11271},
  year={2018}
}

@article{lee2017deep,
  title={Deep neural networks as gaussian processes},
  author={Lee, Jaehoon and Bahri, Yasaman and Novak, Roman and Schoenholz, Samuel S and Pennington, Jeffrey and Sohl-Dickstein, Jascha},
  journal={arXiv preprint arXiv:1711.00165},
  year={2017}
}

@article{hayou2019training,
  title={Training dynamics of deep networks using stochastic gradient descent via neural tangent kernel},
  author={Hayou, Soufiane and Doucet, Arnaud and Rousseau, Judith},
  journal={CoRR},
  year={2019}
}

@misc{du2019gradientdescentprovablyoptimizes,
      title={Gradient Descent Provably Optimizes Over-parameterized Neural Networks}, 
      author={Simon S. Du and Xiyu Zhai and Barnabas Poczos and Aarti Singh},
      year={2019},
      eprint={1810.02054},
      archivePrefix={arXiv},
      primaryClass={cs.LG},
      url={https://arxiv.org/abs/1810.02054}, 
}

@misc{yang2021tensorprogramsiiineural,
      title={Tensor Programs III: Neural Matrix Laws}, 
      author={Greg Yang},
      year={2021},
      eprint={2009.10685},
      archivePrefix={arXiv},
      primaryClass={cs.NE},
      url={https://arxiv.org/abs/2009.10685}, 
}

@article{hanin24random,
  author  = {Boris Hanin},
  title   = {Random Fully Connected Neural Networks as Perturbatively Solvable Hierarchies},
  journal = {Journal of Machine Learning Research},
  year    = {2024},
  volume  = {25},
  number  = {267},
  pages   = {1--58},
  url     = {http://jmlr.org/papers/v25/23-0643.html}
}

@misc{noci2023shapedtransformerattentionmodels,
      title={The Shaped Transformer: Attention Models in the Infinite Depth-and-Width Limit}, 
      author={Lorenzo Noci and Chuning Li and Mufan Bill Li and Bobby He and Thomas Hofmann and Chris Maddison and Daniel M. Roy},
      year={2023},
      eprint={2306.17759},
      archivePrefix={arXiv},
      primaryClass={stat.ML},
      url={https://arxiv.org/abs/2306.17759}, 
}

@article{Hanin2019products,
   title={Products of Many Large Random Matrices and Gradients in Deep Neural Networks},
   volume={376},
   ISSN={1432-0916},
   url={http://dx.doi.org/10.1007/s00220-019-03624-z},
   DOI={10.1007/s00220-019-03624-z},
   number={1},
   journal={Communications in Mathematical Physics},
   publisher={Springer Science and Business Media LLC},
   author={Hanin, Boris and Nica, Mihai},
   year={2019},
   month=dec, pages={287–322} }

@article{martens2021rapid,
  title={Rapid training of deep neural networks without skip connections or normalization layers using deep kernel shaping},
  author={Martens, James and Ballard, Andy and Desjardins, Guillaume and Swirszcz, Grzegorz and Dalibard, Valentin and Sohl-Dickstein, Jascha and Schoenholz, Samuel S},
  journal={arXiv preprint arXiv:2110.01765},
  year={2021}
}

@misc{li2025geometric,
      title={Geometric Dyson Brownian Motions and the Free Log-Normal Limit for a Non-Square Product of Random Matrices}, 
      author={Mufan Li and Jaume de Dios Pont and Mihai Nica and Daniel M. Roy},
      year={2026},
}

@misc{trevisan2023widedeepneuralnetworks,
      title={Wide Deep Neural Networks with Gaussian Weights are Very Close to Gaussian Processes}, 
      author={Dario Trevisan},
      year={2023},
      eprint={2312.11737},
      archivePrefix={arXiv},
      primaryClass={math.ST},
      url={https://arxiv.org/abs/2312.11737}, 
}

@misc{bassetti2024proportionalinfinitewidthinfinitedepthlimit,
      title={Proportional infinite-width infinite-depth limit for deep linear neural networks}, 
      author={Federico Bassetti and Lucia Ladelli and Pietro Rotondo},
      year={2024},
      eprint={2411.15267},
      archivePrefix={arXiv},
      primaryClass={stat.ML},
      url={https://arxiv.org/abs/2411.15267}, 
}

@article{polson2013bayesian,
  title={Bayesian inference for logistic models using P{\'o}lya--Gamma latent variables},
  author={Polson, Nicholas G and Scott, James G and Windle, Jesse},
  journal={Journal of the American statistical Association},
  volume={108},
  number={504},
  pages={1339--1349},
  year={2013},
  publisher={Taylor \& Francis}
}

@article{hanin2021non,
  title={Non-asymptotic results for singular values of Gaussian matrix products},
  author={Hanin, Boris and Paouris, Grigoris},
  journal={Geometric and Functional Analysis},
  volume={31},
  number={2},
  pages={268--324},
  year={2021},
  publisher={Springer}
}

@misc{wang2024nonlinearspikedcovariancematrices,
      title={Nonlinear spiked covariance matrices and signal propagation in deep neural networks}, 
      author={Zhichao Wang and Denny Wu and Zhou Fan},
      year={2024},
      eprint={2402.10127},
      archivePrefix={arXiv},
      primaryClass={stat.ML},
      url={https://arxiv.org/abs/2402.10127}, 
}

@article{cho2009kernel,
  title={Kernel methods for deep learning},
  author={Cho, Youngmin and Saul, Lawrence},
  journal={Advances in neural information processing systems},
  volume={22},
  year={2009}
}

@inproceedings{Fan20Spectra,
 author = {Fan, Zhou and Wang, Zhichao},
 booktitle = {Advances in Neural Information Processing Systems},
 editor = {H. Larochelle and M. Ranzato and R. Hadsell and M.F. Balcan and H. Lin},
 pages = {7710--7721},
 publisher = {Curran Associates, Inc.},
 title = {Spectra of the Conjugate Kernel and Neural Tangent Kernel for linear-width neural networks},
 url = {https://proceedings.neurips.cc/paper_files/paper/2020/file/572201a4497b0b9f02d4f279b09ec30d-Paper.pdf},
 volume = {33},
 year = {2020}
}

@article{el2010spectrum,
  title={The spectrum of kernel random matrices},
  author={El Karoui, Noureddine},
  year={2010}
}

@article{chouard2023deterministic,
  title={Deterministic equivalent of the conjugate kernel matrix associated to artificial neural networks},
  author={Chouard, Cl{\'e}ment},
  journal={arXiv preprint arXiv:2306.05850},
  year={2023}
}

@misc{benigni2022largesteigenvaluesconjugatekernel,
      title={Largest Eigenvalues of the Conjugate Kernel of Single-Layered Neural Networks}, 
      author={Lucas Benigni and Sandrine Péché},
      year={2022},
      eprint={2201.04753},
      archivePrefix={arXiv},
      primaryClass={math.PR},
      url={https://arxiv.org/abs/2201.04753}, 
}

@article{pennington2017nonlinear,
  title={Nonlinear random matrix theory for deep learning},
  author={Pennington, Jeffrey and Worah, Pratik},
  journal={Advances in neural information processing systems},
  volume={30},
  year={2017}
}

@phdthesis{Nea94,
  author  = {Neal, Radford M.},
  title   = {Bayesian Learning for Neural Networks},
  school  = {Department of Computer Science, University of Toronto},
  year    = {1994},
  url     = {https://glizen.com/radfordneal/ftp/thesis.pdf},
  note    = {Ph.D. thesis},
}

@inproceedings{Wil96,
 author = {Williams, Christopher},
 booktitle = {Advances in Neural Information Processing Systems},
 editor = {M.C. Mozer and M. Jordan and T. Petsche},
 pages = {},
 publisher = {MIT Press},
 title = {Computing with Infinite Networks},
 url = {https://proceedings.neurips.cc/paper_files/paper/1996/file/ae5e3ce40e0404a45ecacaaf05e5f735-Paper.pdf},
 volume = {9},
 year = {1996}
}

@article{SCS88,
  title = {Chaos in Random Neural Networks},
  author = {Sompolinsky, H. and Crisanti, A. and Sommers, H. J.},
  journal = {Phys. Rev. Lett.},
  volume = {61},
  issue = {3},
  pages = {259--262},
  numpages = {0},
  year = {1988},
  month = {7},
  publisher = {American Physical Society},
  doi = {10.1103/PhysRevLett.61.259},
  url = {https://link.aps.org/doi/10.1103/PhysRevLett.61.259}
}

@misc{Poo+16,
      title={Exponential expressivity in deep neural networks through transient chaos}, 
      author={Ben Poole and Subhaneil Lahiri and Maithra Raghu and Jascha Sohl-Dickstein and Surya Ganguli},
      year={2016},
      eprint={1606.05340},
      archivePrefix={arXiv},
      primaryClass={stat.ML},
      url={https://arxiv.org/abs/1606.05340}, 
}

@misc{Sch+17,
      title={Deep Information Propagation}, 
      author={Samuel S. Schoenholz and Justin Gilmer and Surya Ganguli and Jascha Sohl-Dickstein},
      year={2017},
      eprint={1611.01232},
      archivePrefix={arXiv},
      primaryClass={stat.ML},
      url={https://arxiv.org/abs/1611.01232}, 
}

@article{Mac92,
  author    = {MacKay, David J. C.},
  title     = {A Practical {B}ayesian Framework for Backpropagation Networks},
  journal   = {Neural Computation},
  volume    = {4},
  number    = {3},
  pages     = {448--472},
  year      = {1992},
  doi       = {10.1162/neco.1992.4.3.448},
}

@misc{Blu+15,
      title={Weight Uncertainty in Neural Networks}, 
      author={Charles Blundell and Julien Cornebise and Koray Kavukcuoglu and Daan Wierstra},
      year={2015},
      eprint={1505.05424},
      archivePrefix={arXiv},
      primaryClass={stat.ML},
      url={https://arxiv.org/abs/1505.05424}, 
}

@misc{Gal+16,
      title={Dropout as a Bayesian Approximation: Representing Model Uncertainty in Deep Learning}, 
      author={Yarin Gal and Zoubin Ghahramani},
      year={2016},
      eprint={1506.02142},
      archivePrefix={arXiv},
      primaryClass={stat.ML},
      url={https://arxiv.org/abs/1506.02142}, 
}

@misc{li2024differentialequationscalinglimits,
      title={Differential Equation Scaling Limits of Shaped and Unshaped Neural Networks}, 
      author={Mufan Bill Li and Mihai Nica},
      year={2024},
      eprint={2310.12079},
      archivePrefix={arXiv},
      primaryClass={stat.ML},
      url={https://arxiv.org/abs/2310.12079}, 
}

@article{hanin2019universal,
  title={Universal function approximation by deep neural nets with bounded width and relu activations},
  author={Hanin, Boris},
  journal={Mathematics},
  volume={7},
  number={10},
  pages={992},
  year={2019},
  publisher={MDPI}
}

@misc{daubechies2022nonlinear,
      title={Nonlinear Approximation and (Deep) ReLU Networks}, 
      author={I. Daubechies and R. DeVore and S. Foucart and B. Hanin and G. Petrova},
      year={2019},
      eprint={1905.02199},
      archivePrefix={arXiv},
      primaryClass={cs.LG},
      url={https://arxiv.org/abs/1905.02199}, 
}

@article{zhang2022deep,
  title={Deep learning without shortcuts: Shaping the kernel with tailored rectifiers},
  author={Zhang, Guodong and Botev, Aleksandar and Martens, James},
  journal={arXiv preprint arXiv:2203.08120},
  year={2022}
}

@misc{bassetti2025featurelearningfinitewidthbayesian,
      title={Feature learning in finite-width Bayesian deep linear networks with multiple outputs and convolutional layers}, 
      author={Federico Bassetti and Marco Gherardi and Alessandro Ingrosso and Mauro Pastore and Pietro Rotondo},
      year={2025},
      eprint={2406.03260},
      archivePrefix={arXiv},
      primaryClass={stat.ML},
      url={https://arxiv.org/abs/2406.03260}, 
}

@misc{zhang2017understandingdeeplearningrequires,
      title={Understanding deep learning requires rethinking generalization}, 
      author={Chiyuan Zhang and Samy Bengio and Moritz Hardt and Benjamin Recht and Oriol Vinyals},
      year={2017},
      eprint={1611.03530},
      archivePrefix={arXiv},
      primaryClass={cs.LG},
      url={https://arxiv.org/abs/1611.03530}, 
}

@article{Belkin19reconciling,
   title={Reconciling modern machine-learning practice and the classical bias–variance trade-off},
   volume={116},
   ISSN={1091-6490},
   url={http://dx.doi.org/10.1073/pnas.1903070116},
   DOI={10.1073/pnas.1903070116},
   number={32},
   journal={Proceedings of the National Academy of Sciences},
   publisher={Proceedings of the National Academy of Sciences},
   author={Belkin, Mikhail and Hsu, Daniel and Ma, Siyuan and Mandal, Soumik},
   year={2019},
   month=jul, pages={15849–15854} 
}

@misc{advani2017highdimensionaldynamicsgeneralizationerror,
      title={High-dimensional dynamics of generalization error in neural networks}, 
      author={Madhu S. Advani and Andrew M. Saxe},
      year={2017},
      eprint={1710.03667},
      archivePrefix={arXiv},
      primaryClass={stat.ML},
      url={https://arxiv.org/abs/1710.03667}, 
}

@article{Li_2021,
   title={Statistical Mechanics of Deep Linear Neural Networks: The Backpropagating Kernel Renormalization},
   volume={11},
   ISSN={2160-3308},
   url={http://dx.doi.org/10.1103/PhysRevX.11.031059},
   DOI={10.1103/physrevx.11.031059},
   number={3},
   journal={Physical Review X},
   publisher={American Physical Society (APS)},
   author={Li, Qianyi and Sompolinsky, Haim},
   year={2021},
   month=sep }

@article{Pacelli_2023,
   title={A statistical mechanics framework for Bayesian deep neural networks beyond the infinite-width limit},
   volume={5},
   ISSN={2522-5839},
   url={http://dx.doi.org/10.1038/s42256-023-00767-6},
   DOI={10.1038/s42256-023-00767-6},
   number={12},
   journal={Nature Machine Intelligence},
   publisher={Springer Science and Business Media LLC},
   author={Pacelli, R. and Ariosto, S. and Pastore, M. and Ginelli, F. and Gherardi, M. and Rotondo, P.},
   year={2023},
   month=dec, pages={1497–1507} }

@misc{camilli2025informationtheoreticreductiondeepneural,
      title={Information-theoretic reduction of deep neural networks to linear models in the overparametrized proportional regime}, 
      author={Francesco Camilli and Daria Tieplova and Eleonora Bergamin and Jean Barbier},
      year={2025},
      eprint={2505.03577},
      archivePrefix={arXiv},
      primaryClass={math.ST},
      url={https://arxiv.org/abs/2505.03577}, 
}

@misc{roberts2022principles,
  title={The principles of deep learning theory},
  author={Roberts, Daniel A and Yaida, Sho and Hanin, Boris},
  year={2022},
  publisher={Cambridge University Press Cambridge, MA, USA}
}

@article{rotskoff2018neural,
  title={Neural networks as interacting particle systems: Asymptotic convexity of the loss landscape and universal scaling of the approximation error},
  author={Rotskoff, Grant M and Vanden-Eijnden, Eric},
  journal={stat},
  volume={1050},
  pages={22},
  year={2018}
}

@article{mei2018mean,
  title={A mean field view of the landscape of two-layer neural networks},
  author={Mei, Song and Montanari, Andrea and Nguyen, Phan-Minh},
  journal={Proceedings of the National Academy of Sciences},
  volume={115},
  number={33},
  pages={E7665--E7671},
  year={2018},
  publisher={National Academy of Sciences}
}

@article{sirignano2020mean,
  title={Mean field analysis of neural networks: A law of large numbers},
  author={Sirignano, Justin and Spiliopoulos, Konstantinos},
  journal={SIAM Journal on Applied Mathematics},
  volume={80},
  number={2},
  pages={725--752},
  year={2020},
  publisher={SIAM}
}

@article{seroussi2023separation,
  title={Separation of scales and a thermodynamic description of feature learning in some cnns},
  author={Seroussi, Inbar and Naveh, Gadi and Ringel, Zohar},
  journal={Nature Communications},
  volume={14},
  number={1},
  pages={908},
  year={2023},
  publisher={Nature Publishing Group UK London}
}

@article{naveh2021predicting,
  title={Predicting the outputs of finite deep neural networks trained with noisy gradients},
  author={Naveh, Gadi and Ben David, Oded and Sompolinsky, Haim and Ringel, Zohar},
  journal={Physical Review E},
  volume={104},
  number={6},
  pages={064301},
  year={2021},
  publisher={APS}
}
